%% file: final.tex
\documentclass[reqno,11pt]{amsart}

\usepackage{amsmath,amssymb,amsthm,mathtools}
\usepackage{mathrsfs}
\usepackage{bbm}

\usepackage{graphicx}
\usepackage{tikz}

\usetikzlibrary{backgrounds}
\usetikzlibrary{arrows}
\usetikzlibrary{shapes.geometric}

\pgfdeclarelayer{background}
\pgfdeclarelayer{edgelayer}
\pgfdeclarelayer{nodelayer}
\pgfsetlayers{background,edgelayer,nodelayer,main}

\tikzstyle{Labeled}=[
    fill=red!40,
    draw=black,
    circle
]

\tikzstyle{Unlabeled}=[
    fill=blue!25,
    draw=black,
    rectangle
]

\tikzstyle{Neighbor}=[
    -,
    draw=black
]

\usepackage[usenames,dvipsnames]{pstricks}
\usepackage{pst-grad}
\usepackage{pst-plot}
\usepackage[space]{grffile} 

\usepackage{verbatim}
\usepackage{xcolor}
\usepackage{cancel}
\usepackage[normalem]{ulem}
\usepackage{enumitem}
\usepackage{etoolbox}

\usepackage[colorlinks=true,allcolors=blue]{hyperref}

\numberwithin{equation}{section}

\newtheorem{theorem}{Theorem}[section]
\newtheorem{lemma}[theorem]{Lemma}
\newtheorem{corollary}[theorem]{Corollary}
\newtheorem{proposition}[theorem]{Proposition}

\theoremstyle{definition}
\newtheorem{definition}[theorem]{Definition}
\newtheorem{example}[theorem]{Example}

\theoremstyle{remark}
\newtheorem{remark}[theorem]{Remark}

\title{The Monge--Amp\`ere equation on graphs}
 
\author[A. Alkhozaae]{Ahmed Alkhozaae}
\address{Applied Mathematics and Computational Sciences (AMCS), Computer, Electrical and Mathematical Sciences and Engineering Division (CEMSE), King Abdullah University of Science and Technology (KAUST), Thuwal, 23955-6900, Kingdom of Saudi Arabia}
\email{ahmed.abdali.1@kaust.edu.sa}

\author[J.D. Rossi]{Julio D. Rossi}
\address{Departamento de Matemática y Estadística, Universidad Torcuato Di Te\-lla,  Av. Figueroa Alcorta 7350 (C1428BCW), Ciudad de Buenos Aires, Argentina}
\email{julio.rossi@utdt.edu}

\author[A. Sobral]{Aelson Sobral}
\address{Applied Mathematics and Computational Sciences (AMCS), Computer, Electrical and Mathematical Sciences and Engineering Division (CEMSE), King Abdullah University of Science and Technology (KAUST), Thuwal, 23955-6900, Kingdom of Saudi Arabia}
\email{aelson.sobral@kaust.edu.sa}

\author[J.M. Urbano]{Jos\'{e} Miguel Urbano\textsuperscript{\dag}}
\thanks{\textsuperscript{\dag}Corresponding author.}
\address{Applied Mathematics and Computational Sciences (AMCS), Computer, Electrical and Mathematical Sciences and Engineering Division (CEMSE), King Abdullah University of Science and Technology (KAUST), Thuwal, 23955-6900, Kingdom of Saudi Arabia; and CMUC, Department of Mathematics, University of Coimbra, 3000-143 Coimbra, Portugal}
\email{miguel.urbano@kaust.edu.sa}

\begin{document}

\subjclass[2020]{Primary 35R02, 35J96; Secondary 68T05, 65N12.} 



\keywords{PDEs on graphs, Monge--Amp\`ere equations, Graph-based semi-supervised learning, Perron method}
  
\begin{abstract}
We introduce a version of the Monge--Amp\`ere equation on finite graphs, motivated by nonlinear graph-based interpolation and semi-supervised learning. The operator is defined as the product of discrete analogs of the Hessian eigenvalues, obtained via local order statistics of function values at neighboring vertices. We derive an equivalent Bellman-type formulation of the inhomogeneous Dirichlet problem, establish a comparison principle and uniqueness in the strictly graph-convex class, and investigate existence via Perron's method, identifying certain graph-theoretic obstructions. We also study the homogeneous equation, for which the problem reduces to a nonlinear interpolation rule involving the smallest discrete eigenvalue. Finally, we propose numerical schemes for both the homogeneous and inhomogeneous problems.
\end{abstract}
 
\date{\today}

\maketitle
 
\tableofcontents 

\section{Introduction}\label{sct:intro}

Graph-based semi-supervised learning concerns extending labels prescri\-bed on a small subset of a data set to a much larger collection of unlabeled vertices, using the geometry encoded by all available data. More precisely, given a finite point cloud \(\mathcal X\), a labeled subset \(\mathcal O \subset \mathcal X\), and boundary datum \(g \colon \mathcal O \to \mathbb R\), one first equips \(\mathcal X\) with a graph structure reflecting the affinities between data points. The goal is then to construct an extension \(u \colon \mathcal X \to \mathbb R\), satisfying \(u=g\) on \(\mathcal O\), whose values on \(\mathcal X \setminus \mathcal O\) are compatible with the geometry of the graph.

A natural and by now classical way to impose this compatibility is through a harmonic extension, or equivalently, through graph-Laplacian regularization. In this approach, the unknown label function is selected by minimizing a quadratic graph energy subject to the boundary condition \(u=g\) on \(\mathcal O\), leading to a linear Dirichlet problem on the graph. This viewpoint appears, for instance, in the Gaussian-field formulation of Zhu, Ghahramani, and Lafferty \cite{ZhuGhahramaniLafferty2003}, in the local and global consistency method of Zhou et al. \cite{ZhouBousquetLalWestonScholkopf2004}, and in the manifold-regularization framework of Belkin, Niyogi, and Sindhwani \cite{BelkinNiyogiSindhwani2006}. These methods are linear, computationally efficient, and have been highly influential in machine learning. However, their reliance on the graph Laplacian makes them intrinsically diffusive: local information is averaged over all graph directions, which can oversmooth sharp transitions and lead to degeneracies in very low-label regimes. These limitations have motivated the development of modified and nonlinear graph PDEs for learning, including low-label-rate analysis for graph-Laplacian regularization \cite{NadlerSrebroZhou2009, CalderSlepcevThorpe2023}, Poisson learning \cite{CalderCookThorpeSlepcev2020}, \(p\)-Laplacian and Lipschitz-learning methods \cite{ElAlaouiChengRamdasWainwrightJordan2016, SlepcevThorpe2019, Calder2019Game, Calder2019Lipschitz, FloresCalderLerman2022}, and Hamilton--Jacobi equations on graphs \cite{CalderEttehad2022}.

In this paper, we pursue a different nonlinear direction, replacing the diffusive graph-Laplacian paradigm by a Monge--Amp\`ere-type equation on finite graphs. In the continuum setting, the Monge--Amp\`ere operator is given by
\[
    u \longmapsto \det(D^2 u),
\]
that is, by the product of the eigenvalues of the Hessian. In contrast with the Laplacian, which is the trace of the Hessian, the Monge--Amp\`ere operator is fully nonlinear and is degenerate elliptic only inside the cone of convex functions. It plays a central role in convex geometry, fully nonlinear elliptic PDE, and optimal transport; see, for example, \cite{Gutierrez2016}. It has also motivated the development of monotone numerical schemes for fully nonlinear equations and optimal transportation problems; cf. \cite{Oberman2008, FroeseOberman2011, BenamouFroeseOberman2014}. From the perspective of data analysis, optimal transport and Monge--Amp\`ere-type equations offer geometry-sensitive mechanisms that are fundamentally different from isotropic diffusive smoothing; see \cite{PeyreCuturi2019}. This makes the Monge--Amp\`ere operator a natural candidate for graph-based interpolation and semi-supervised learning, particularly in settings where label propagation should respond to anisotropic structure in the data rather than simply average over local graph directions.

The central difficulty in formulating a Monge--Amp\`ere equation on a finite graph is that there is no canonical notion of Hessian. We address this by replacing the eigenvalues of the Hessian with discrete eigenvalues defined through local order statistics of the values of \(u\) on each neighborhood. Let \(x\in \mathcal X\), and suppose that its neighborhood \(\mathcal N_x\) has even cardinality \(n\). For \(i=1,\dots,n/2\), we define
\[
    \lambda_i[u](x)
    \coloneqq
    \min_{\substack{S\subseteq \mathcal N_x\\ |S|=2i}}
    \left(
        \max_{\substack{y,z\in S\\ y\neq z}}
        \frac{u(y)+u(z)}{2}
        -u(x)
    \right).
\]
This definition has a simple order-statistic representation. Indeed, if the neighbors of \(x\) are indexed so that
\[
    u(y_1)\leq u(y_2)\leq \cdots \leq u(y_n),
\]
then
\[
    \lambda_i[u](x)
    =
    \frac{u(y_{2i-1})+u(y_{2i})}{2}-u(x),
    \qquad i=1,\dots,\frac n2 .
\]
Thus, the quantities \(\lambda_i[u](x)\) play the role of ordered directional second-order increments at the vertex \(x\). They provide a discrete analog of the Hessian eigenvalues, with the smallest increments detecting the most degenerate local directions. In fact, recall the well-known formula for the eigenvalues of a symmetric matrix
$$
\lambda_i (D^2 u) = 
\min_{\substack{S\subseteq \mathbb{R}^d \\ \dim S=i}}
    \left(
        \max_{\substack{v\in S \\ \| v \| =1}}
        \langle D^2 u \, v , v \rangle
    \right)
    = 
\min_{\substack{S\subseteq \mathbb{R}^d \\ \dim S=i}}
    \left(
        \max_{\substack{v\in S \\ \| v \| =1}}
        \partial_{vv} u
    \right).
$$

In addition to this similarity, these discrete eigenvalues are naturally compatible with the graph Laplacian. With the sign convention
\[
    \mathcal L u(x)
    =
    \frac{1}{n}\sum_{y\in \mathcal N_x}u(y)-u(x),
\]
one has
\[
    \mathcal L u(x)
    =
    \frac{2}{n}\sum_{i=1}^{n/2}\lambda_i[u](x).
\]
In this sense, the graph Laplacian is the trace-type operator associated with the discrete eigenvalues \(\lambda_i[u]\). The graph Monge--Amp\`ere operator introduced in this paper is instead the corresponding determinant-type operator,
\[
    \mathcal M [u](x)
    \coloneqq
    \prod_{i=1}^{n/2}\lambda_i[u](x).
\]

This construction also suggests a natural notion of convexity on the graph (we refer to \cite{convexT,convexTT, DelPezzoMosqueraRossi2014} for related references). We say that \(u\) is graph convex if, for every vertex \(x\) and every pair of distinct neighbors \(y,z\in \mathcal N_x\),
\[
    \frac{u(y)+u(z)}{2}\geq u(x).
\]
Equivalently, the associated discrete eigenvalues satisfy
\[
    \lambda_i[u](x)\geq 0,
    \qquad i=1,\dots,\frac{|\mathcal N_x|}{2},
\]
at every vertex \(x\) where the operator is defined. This positivity condition is the discrete analog of the convexity constraint in the continuum Monge--Amp\`ere equation, under which the operator is degenerate elliptic. It is therefore not merely a formal requirement, but the condition that places the graph Monge--Amp\`ere operator in its elliptic regime. Moreover, as \(\mathcal M\) is defined as a product of the eigenvalue-like quantities \(\lambda_i[u]\), it is sensitive to degeneracy in any single local direction: one small eigenvalue can force the determinant to be small. This is fundamentally different from graph-Laplacian regularization, where neighboring information is combined through an arithmetic average and such directional degeneracies can be obscured.

We then study the Dirichlet problem
\[
\begin{cases}
    \mathcal M [u](x)=f(x), & x\in \mathcal X\setminus \mathcal O,\\
    u(x)=g(x), & x\in \mathcal O,
\end{cases}
\]
where \(f>0\) on \(\mathcal X\setminus \mathcal O\). A key observation is that, for the graph-convex set of functions, the graph Monge--Amp\`ere equation admits a Bellman-type reformulation. Indeed, for \(a_i\geq 0\), the arithmetic--geometric mean identity gives
\[
    \left(\prod_{i=1}^k a_i\right)^{1/k}
    =
    \inf_{\substack{\alpha_i>0 \\ \prod_{i=1}^k\alpha_i=1}}
    \frac1k\sum_{i=1}^k\alpha_i a_i .
\]
Applying this identity to the discrete eigenvalues \(\lambda_i[u](x)\) rewrites the deter\-minant-type equation as an infimum of weighted trace-type expressions. More precisely, define
\[
    H_i[u](x)
    \coloneqq
    \min_{\substack{S\subseteq \mathcal N_x\\ |S|=2i}}\;
    \max_{\substack{y,z\in S\\ y\neq z}}
    \frac{u(y)+u(z)}{2},
    \qquad i=1,\dots,\frac n2,
\]
where \(n=|\mathcal N_x|\). Since
\[
    \lambda_i[u](x)=H_i[u](x)-u(x),
\]
the equation $$\mathcal M [u](x)=f(x) $$ is equivalent to
\[
    u(x)
    =
    \inf_{\alpha\in\mathcal A_{n/2}}
    \left[
        \frac{
            \sum_{i=1}^{n/2}\alpha_i H_i[u](x)
            -\frac n2 f(x)^{2/n}
        }{
            \sum_{i=1}^{n/2}\alpha_i
        }
    \right],
\]
where
\[
    \mathcal A_{n/2}
    =
    \left\{
        \alpha\in(0,\infty)^{n/2}:
        \prod_{i=1}^{n/2}\alpha_i=1
    \right\}.
\]
This formulation is particularly useful because it makes the operator's monotonicity transparent. For each fixed choice of weights, the right-hand side is built from monotone order-statistic operators \(H_i\). It therefore places the graph Monge--Amp\`ere equation in a framework analogous to Bellman equations for fully nonlinear elliptic PDE, where monotonicity is central both to the viscosity-solution theory and to the convergence of approximation schemes \cite{CrandallIshiiLions1992, BarlesSouganidis1991}.

The first part of the paper develops the basic theory of the inhomogeneous graph Monge--Amp\`ere equation. We prove the order-statistic formula for the discrete eigenvalues, establish their relation with the graph Laplacian, derive the Bellman formulation, and prove a comparison principle for subsolutions and supersolutions. As a consequence, strictly graph-convex solutions are unique whenever they exist. We then formulate a Perron method: under suitable boundedness and nonemptiness assumptions on the subsolution class, the supremum of all subsolutions is the unique solution. At the same time, the inhomogeneous problem exhibits genuine graph-theoretic obstructions. In particular, the existence of barriers for the lower extremal operator is tied to the presence or absence of closed substructures in the unlabeled graph. This shows that, unlike the classical finite-dimensional linear Dirichlet problem for the graph Laplacian, the graph Monge--Amp\`ere equation is sensitive to the combinatorial geometry of the graph.

The second part of the paper studies the homogeneous problem
\[
\begin{cases}
    \mathcal M [u](x)=0, & x\in \mathcal X\setminus \mathcal O,\\
    u(x)=g(x), & x\in \mathcal O.
\end{cases}
\]
For convex functions, the equation
$\mathcal M [u](x)=0$ reduces to the smallest discrete eigenvalue,
\[
    \lambda_1[u]=0,
\]
or equivalently
\[
    u = H_1[u], \quad \text{in} \quad \mathcal X\setminus\mathcal O .
\]
This equation has a simpler structure than the inhomogeneous problem while still retaining the nonlinear, order-statistic character of the Monge--Amp\`ere operator. We prove a comparison principle and uniqueness for the corresponding Dirichlet problem and discuss existence through a Perron construction. The homogeneous equation may be viewed as a nonlinear interpolation rule based on extremal local averages, in contrast with harmonic extension, which is based on full local averaging.

Finally, we discuss numerical schemes motivated by the Bellman formulation. In the inhomogeneous case, the minimization over $\alpha$ can be reduced to a scalar nonlinear equation for the optimal value $t^*$, which is then used to define an iterative update. In the homogeneous case, the update is simpler and is driven by the residual
\[
    u-H_1[u].
\]
These schemes are intended for graph-based interpolation and semi-supervi\-sed learning on point clouds, including sparse-label regimes where purely Laplacian methods may be inadequate. Related sparse-data interpolation problems have motivated weighted nonlocal Laplacian methods \cite{ShiOsherZhu2017}, while nonlinear graph PDEs such as $p$-Laplacian, infinity-Laplacian, Poisson, and Hamilton--Jacobi models provide important points of comparison \cite{CalderCookThorpeSlepcev2020, SlepcevThorpe2019, Calder2019Game, Calder2019Lipschitz, CalderEttehad2022}. The contribution of this paper is to add a determinant-type graph operator to this nonlinear PDE toolkit and to initiate the analysis of its comparison, uniqueness, existence, and computational properties.

\section{Preliminaries}\label{sct:prelim}
Let \(\mathcal G=(\mathcal X,E)\) be a finite simple undirected graph,
and let \(\mathcal O\subset\mathcal X\) be the set of boundary
vertices. For each \(x\in\mathcal X\), let \(\mathcal N_x\) denote
the set of neighbors of \(x\). We assume that there exists a fixed
even integer \(n\ge2\) such that
\[
    |\mathcal N_x|=n,
    \qquad x\in\mathcal X\setminus\mathcal O.
\]
No degree condition is imposed on the boundary vertices.

Throughout this paper, we will consider the following notion of eigenvalues.

\begin{definition}
Let $u \colon \mathcal{X}\rightarrow\mathbb{R}$, and \(x\in\mathcal X\setminus\mathcal O\), with $n$ neighbors. For each $i \in \{1,\dots, n/2\}$, we define
\[
    \lambda_i[u](x)\coloneqq \min_{\substack{S\subseteq \mathcal{N}_x\\ |S|=2i}} \left( \max_{\substack{y,z\in S\\ y\ne z}} \frac{u(y)+u(z)}{2} - u(x) \right).
\]
\end{definition}

The next result gives a simple expression for each eigenvalue whenever we can order the values of $u$ at the neighbors.

\begin{lemma}\label{lemma:eigen-expression}
Let $u \colon \mathcal{X}\rightarrow\mathbb{R}$, and \(x\in\mathcal X\setminus\mathcal O\), with neighbors $\{y_1,\dots,y_n\}$. Assuming that $u(y_1) \leq \dots \leq u(y_n)$, we have
\[
    \lambda_i[u](x) = \frac{1}{2}\bigl(u(y_{2i-1})+u(y_{2i})\bigr)-u(x),\qquad i \in \{1,\dots,n/2\}.
\]
\end{lemma}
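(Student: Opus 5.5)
Since $u(x)$ does not depend on $S$, it suffices to prove the order-statistic identity
\[
    \min_{\substack{S\subseteq \mathcal N_x\\ |S|=2i}}\ \max_{\substack{y,z\in S\\ y\neq z}} \frac{u(y)+u(z)}{2} = \frac{u(y_{2i-1})+u(y_{2i})}{2}.
\]
Two preliminary observations drive the argument. For a fixed set $S$ with $|S|=2i\ge 2$, the inner maximum over distinct pairs is attained at the two elements of $S$ with the largest values of $u$. Indeed, $a+b$ over distinct $a,b$ is maximized by the top two entries, where ties are handled through the indexing of $\mathcal N_x$. So if we write $m_1(S)\ge m_2(S)$ for the largest and second largest values of $u$ on $S$, counted with multiplicity via distinct vertices, the inner quantity equals $\tfrac12\bigl(m_1(S)+m_2(S)\bigr)$.

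\textbf{Upper bound.} Choose $S_0=\{y_1,\dots,y_{2i}\}$. By the ordering, its two largest values are $u(y_{2i})$ and $u(y_{2i-1})$. Hence the minimum is at most $\tfrac12\bigl(u(y_{2i-1})+u(y_{2i})\bigr)$.

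\textbf{Lower bound.} This is the only step that needs a little care. Let $S$ be arbitrary with $|S|=2i$, and let $J=\{j : y_j\in S\}$, so $|J|=2i$. Order the elements of $J$ increasingly as $j_1<\dots<j_{2i}$. Since the $j_k$ are distinct integers, $j_k\ge k$, and in particular $j_{2i}\ge 2i$ and $j_{2i-1}\ge 2i-1$. Because $u(y_j)$ is nondecreasing in $j$, this gives $u(y_{j_{2i}})\ge u(y_{2i})$ and $u(y_{j_{2i-1}})\ge u(y_{2i-1})$. The vertices $y_{j_{2i-1}}$ and $y_{j_{2i}}$ are distinct elements of $S$, so the inner maximum for $S$ is at least $\tfrac12\bigl(u(y_{j_{2i-1}})+u(y_{j_{2i}})\bigr)\ge\tfrac12\bigl(u(y_{2i-1})+u(y_{2i})\bigr)$. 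Note that this bound evaluates only one particular pair, so the characterization of the maximizing pair from the first paragraph is not even needed here.

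Taking the minimum over $S$ and combining with the upper bound gives equality, and subtracting $u(x)$ yields the lemma. I expect no real obstacle. The only subtlety is ties in the values of $u$, and working with indices rather than values handles them automatically.
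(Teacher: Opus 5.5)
Your proof is correct and follows the same route as the paper: in both, the inner maximum on an index-ordered $S$ is taken at its two largest-index elements, and this is compared with the initial segment $\{y_1,\dots,y_{2i}\}$. Your explicit bound $j_k\ge k$ is a cleaner version of the paper's remark that $\{y_1,\dots,y_{2i}\}$ has ``the smallest feasible indices.'' It also avoids the paper's claim that this set is \emph{the} minimizer, which is not quite accurate when $u$ has ties.
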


\begin{proof}
Let $S\subseteq \mathcal{N}_x$ such that $|S|=2i$. We can write $S = \{y_{k_j}\}_{j=1}^{2i}$, where the indices $(k_j)_{j=1}^{2i}\in\mathbb{N}^{2i}$ are ordered increasingly. So $u(y_{k_j})$ will be increasing in $(k_j)_{j=1}^{2i}$ and then
\[
\max_{\substack{y,z\in S\\ y\ne z}} \frac{u(y)+u(z)}{2} = \frac{u(y_{k_{2i-1}})+u(y_{k_{2i}})}{2}.
\]
Now, we can deduce that
\[
\min_{\substack{S\subseteq \mathcal{N}_x\\ |S|=2i}} \left(\frac{u(y_{k_{2i-1}})+u(y_{k_{2i}})}{2} \right) =\frac{u(y_{2i-1})+u(y_{2i})}{2},
\]
as the minimizing set must be $\{y_1,y_2,\dots,y_{2i-1},y_{2i}\}$, because the last two indices are the smallest feasible indices; any other set would require adding a $y_k$ with a bigger index.

Now, we can take $u(x)$ outside the min-max to get the result:
\[
\begin{aligned}
    \lambda_i[u](x)= \min_{\substack{S\subseteq \mathcal{N}_x\\ |S|=2i}} \left( \max_{\substack{y,z\in S\\ y\ne z}} \frac{u(y)+u(z)}{2} \right) - u(x)=\frac{u(y_{2i-1})+u(y_{2i})}{2}- u(x).
\end{aligned}
\]
\end{proof}

Next, we show that these discrete eigenvalues behave as expected and sum up to the graph Laplacian.

\begin{proposition}
Let $u \colon \mathcal{X}\rightarrow\mathbb{R}$, and \(x\in\mathcal X\setminus\mathcal O\). Let $\mathcal{L}$ be the normalized graph Laplacian on $\mathcal{G}$. Then   
\[
    \mathcal{L}[u](x) = \frac{2}{n} \sum_{i=1}^{n/2}\lambda_i[u](x).
\]
\end{proposition}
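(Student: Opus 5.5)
The plan is to apply Lemma~\ref{lemma:eigen-expression} to each eigenvalue and then sum; the only real work is choosing an ordering of the neighbors. Fix \(x\in\mathcal X\setminus\mathcal O\). Since \(|\mathcal N_x|=n\), we can label its neighbors as \(\{y_1,\dots,y_n\}\) so that \(u(y_1)\le\cdots\le u(y_n)\). Ties can be broken arbitrarily, because Lemma~\ref{lemma:eigen-expression} holds for any such nondecreasing labeling. Recall that the normalized graph Laplacian here uses the sign convention from the introduction,
\[
\mathcal L[u](x)=\frac1n\sum_{y\in\mathcal N_x}u(y)-u(x).
\]

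Next, invoke Lemma~\ref{lemma:eigen-expression} for each \(i\in\{1,\dots,n/2\}\) and sum:
\[
\sum_{i=1}^{n/2}\lambda_i[u](x)=\frac12\sum_{i=1}^{n/2}\bigl(u(y_{2i-1})+u(y_{2i})\bigr)-\frac n2\,u(x).
\]
The pairs \(\{2i-1,2i\}\) for \(i=1,\dots,n/2\) partition \(\{1,\dots,n\}\), and this is exactly where we use that \(n\) is even. Hence the double sum equals \(\sum_{k=1}^n u(y_k)=\sum_{y\in\mathcal N_x}u(y)\).

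Multiplying by \(2/n\) then gives
\[
\frac2n\sum_{i=1}^{n/2}\lambda_i[u](x)=\frac1n\sum_{y\in\mathcal N_x}u(y)-u(x)=\mathcal L[u](x),
\]
which is the claim.

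There is no genuine obstacle. The only points needing care are that the sum is invariant under the choice of ordering when values tie, which holds because the total sum is symmetric, and that the factor \(2/n\) is matched to the normalization \(1/n\) in \(\mathcal L\).
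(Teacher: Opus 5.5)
Your proof is correct and is essentially the paper's argument: you fix a nondecreasing labeling of the neighbors, use Lemma~\ref{lemma:eigen-expression} to identify each $\lambda_i[u](x)$ with the $i$-th consecutive pair average minus $u(x)$, and use that these pairs partition $\mathcal N_x$. The paper runs the same computation in the other direction, starting from $\mathcal L[u](x)$ and grouping terms, but it leaves the ordering step implicit, which your version states explicitly.
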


\begin{proof}
The normalized graph Laplacian of \(u\) at \(x\) is
$$\mathcal{L}[u](x)=\frac{1}{n} \sum_{i=1}^n u(y_i)-u(x),$$
where $\{y_1,\dots,y_n\}$ are the $n$ neighbors of $x$. Grouping consecutive terms gives
$$\begin{aligned}
    \mathcal{L}[u](x)=&\;\frac{1}{n}u(y_1)+\frac{1}{n}u(y_2)+\dots +\frac{1}{n}u(y_{n-1})+\frac{1}{n}u(y_n)-u(x)\\
    =&\; \frac{2}{n}\left(\frac{u(y_1)+u(y_2)}{2}\right)+\dots +\frac{2}{n}\left(\frac{u(y_{n-1})+u(y_n)}{2}\right)-\frac{n}{2}\frac{2}{n}u(x)\\
    =&\; \frac{2}{n}\left(\frac{u(y_1)+u(y_2)}{2}-u(x)\right)+\dots +\frac{2}{n}\left(\frac{u(y_{n-1})+u(y_n)}{2}-u(x)\right)\\
    =&\;\frac{2}{n}\left(\lambda_1[u](x)+\dots+\lambda_{n/2}[u](x)\right).
\end{aligned}$$
\end{proof}

Finally, we will be using the following notion of convexity on graphs.

\begin{definition}[Graph convexity]
Let $A\subseteq\mathcal X$ and let $u\colon\mathcal X\to\mathbb R$. We say that $u$ is graph convex on $A$ if
\[
    \frac{u(y)+u(z)}{2}\ge u(x)
\]
for every $x\in A$ and every pair of distinct vertices $y,z\in\mathcal N_x$. We say that $u$ is strictly graph convex on $A$ if the inequality is strict.
\end{definition}

\section{A Monge--Amp\`ere equation on a graph}

In this section, we study a Monge--Amp\`ere-type equation on the graph $\mathcal G$.  Given a function $u\colon \mathcal X\to \mathbb R$, we define the graph Monge--Amp\`ere operator by
\[
    \mathcal M [u](x)
    \coloneqq
\prod_{i=1}^{n/2}\lambda_i[u](x),
\]
where $n=|\mathcal N_x|$ is assumed to be even.  

Let $\mathcal O\subset \mathcal X$ be the set of boundary vertices, let $g\colon \mathcal O\to \mathbb R$ be a prescribed boundary datum, and let $f\colon \mathcal X\setminus \mathcal O\to (0,\infty)$.  We consider the Dirichlet problem
\[
\begin{cases}
    \mathcal M [u](x)=f(x), & x\in\mathcal X\setminus\mathcal O,\\
    u(x)=g(x), & x\in\mathcal O,
\end{cases}
\]
and seek a solution that is strictly graph convex on $\mathcal X\setminus\mathcal O$. The strict convexity assumption ensures that the discrete eigenvalues are positive, which is the natural ellipticity regime for the Monge--Amp\`ere operator.

The product form of the equation is not the most convenient for comparison arguments or for constructing solutions.  We therefore rewrite it in an equivalent Bellman-type form.  For $k\in\mathbb N$, set
\[
    \mathcal A_k
    \coloneqq
    \left\{
        \alpha\in(0,\infty)^k:
        \prod_{i=1}^k \alpha_i=1
    \right\}.
\]
We use the elementary identity
\begin{equation}\label{eq:infmean}
    k\left(\prod_{i=1}^k a_i\right)^{1/k}
    =
    \inf_{\alpha\in\mathcal A_k}
    \sum_{i=1}^k \alpha_i a_i,
    \qquad a_i\geq 0,
\end{equation}
which is an optimized form of the arithmetic--geometric mean inequality. Applying \eqref{eq:infmean} to the discrete eigenvalues gives
\[
    \frac{n}{2}\bigl(\mathcal M [u](x)\bigr)^{2/n}
    = \frac{n}{2}\Big(
\prod_{i=1}^{n/2}\lambda_i[u](x) \Big)^{2/n}
=    \inf_{\alpha\in\mathcal A_{n/2}}
    \sum_{i=1}^{n/2}\alpha_i \lambda_i[u](x).
\]
It remains to express the eigenvalues in a form that separates the dependence on $u(x)$.  We define
\[
    H_i[u](x)
    \coloneqq
    \min_{\substack{S\subseteq \mathcal N_x\\ |S|=2i}}
    \max_{\substack{y,z\in S\\ y\neq z}}
    \left(
        \frac{u(y)+u(z)}{2}
    \right)
\]
and then we have
\[
    \lambda_i[u](x) = H_i[u](x) - u(x).
\]
Therefore, the Dirichlet problem can be written as
\begin{equation}\label{eq:M-A}
    \begin{cases}
        \displaystyle
        \inf_{\alpha\in\mathcal A_{n/2}}
        \sum_{i=1}^{n/2}
        \alpha_i\bigl(H_i[u](x)-u(x)\bigr)
        =
        \frac{n}{2} (f(x))^{2/n},
        & x\in\mathcal X\setminus\mathcal O,\\[2ex]
        u(x)=g(x),
        & x\in\mathcal O.
    \end{cases}
\end{equation}

Although \eqref{eq:M-A} is already closer to a fully nonlinear elliptic formulation, it is still not the most useful representation. In the next subsection, we isolate $u(x)$ from the infimum in order to obtain a genuine Bellman-type equation. This formulation will be the main tool for the comparison principle and the Perron construction. 

For the remainder of the paper, for \(z\colon\mathcal{X}\to\mathbb{R}\), \(x\in \mathcal{X} \setminus \mathcal{O}\), and \(\alpha\in\mathcal{A}_{n/2}\), define
\begin{equation}\label{eq:definition-Qf}
    \mathcal{Q}_z(x;\alpha)
    \coloneqq
    \frac{
        \displaystyle\sum_{i=1}^{n/2}\alpha_i H_i[z](x)
        -\frac n2 f(x)^{2/n}
    }{
        \displaystyle\sum_{i=1}^{n/2}\alpha_i
    }.
\end{equation}
The associated Bellman operator is
\begin{equation}\label{eq:definition-Ff}
    F[z](x)
    \coloneqq
    \inf_{\alpha\in\mathcal{A}_{n/2}}
    \mathcal{Q}_z(x;\alpha),
    \qquad x\in\mathcal{X} \setminus \mathcal{O}.
\end{equation}

\subsection{Bellman formulation}

We begin with the following proposition that ensures that, for strictly graph-convex functions, we can restrict the infimum in the arithmetic–geometric mean representation to a smaller set. This mirrors the continuous Monge--Amp\`ere setting, where strict convexity permits the Bellman minimization to be restricted to a compact uniformly elliptic class of matrices.

\begin{proposition}\label{lemma:restrict-set-alpha}
Let \(u\colon\mathcal X\to\mathbb R\) be strictly graph-convex on
\(\mathcal X\setminus\mathcal O\), fix
\(x\in\mathcal X\setminus\mathcal O\), and choose \(\theta>0\) such that
\[
    \theta > \frac{\mathcal{L}[u](x)}{\frac{2}{n}\lambda_1[u](x)}.
\]
If we define $\mathcal{A}_{n/2}(\theta) \coloneqq \left\{\alpha \in \mathcal{A}_{n/2}\colon \alpha_i \leq \theta, \ i=1,\ldots,n/2\right\}$, then we have  
\[
    \inf_{\alpha\in\mathcal{A}_{n/2}}\left(\frac{2}{n}\sum_{i=1}^{n/2}\alpha_i \lambda_i[u](x) \right) = \inf_{\substack{\alpha\in\mathcal{A}_{n/2}(\theta)}}\left(\frac{2}{n}\sum_{i=1}^{n/2}\alpha_i \lambda_i[u](x)\right).
\]
In particular, the infimum is attained; there is $\alpha^* \in \mathcal{A}_{n/2}(\theta)$ such that 
\[
    \mathcal{M}[u](x) = \left(\frac{2}{n}\sum_{i=1}^{n/2}\alpha^*_i \lambda_i[u](x) \right)^{n/2}
\]
\end{proposition}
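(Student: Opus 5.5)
Write $\lambda_i=\lambda_i[u](x)$. Strict graph convexity gives $\lambda_1>0$, and by the order-statistic formula of Lemma~\ref{lemma:eigen-expression} we have $0<\lambda_1\le\lambda_2\le\dots\le\lambda_{n/2}$. The plan is to show that any $\alpha\in\mathcal A_{n/2}$ with some coordinate exceeding $\theta$ is already worse than a reference competitor. For this I take $\alpha=(1,\dots,1)\in\mathcal A_{n/2}(\theta)$; note $\theta>1$ necessarily, since $\mathcal L[u](x)=\frac2n\sum_i\lambda_i\ge\frac2n\cdot\frac n2\lambda_1\cdot\frac{2}{n}\cdot\frac n2$, so that $\mathcal L[u](x)/(\frac2n\lambda_1)\ge n/2\ge1$. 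The competitor's value is
\[
\frac2n\sum_i\lambda_i=\mathcal L[u](x)
\]
by the Proposition relating the eigenvalues to the Laplacian.

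Now let $\alpha\in\mathcal A_{n/2}$ with $\alpha_j>\theta$ for some $j$. Since all terms are nonnegative and $\lambda_j\ge\lambda_1$,
\[
\frac2n\sum_i\alpha_i\lambda_i\ \ge\ \frac2n\alpha_j\lambda_j\ >\ \frac2n\theta\lambda_1\ >\ \mathcal L[u](x).
\]
So every such $\alpha$ gives a strictly larger value than the competitor $(1,\dots,1)\in\mathcal A_{n/2}(\theta)$, and it can be discarded. Hence the infimum over $\mathcal A_{n/2}$ equals the infimum over $\mathcal A_{n/2}(\theta)$. The reverse inequality is trivial by inclusion.

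For attainment, I show $\mathcal A_{n/2}(\theta)$ is compact. Each coordinate is at most $\theta$, and the product constraint gives $\alpha_i=1/\prod_{k\ne i}\alpha_k\ge\theta^{-(n/2-1)}>0$. So the set is a closed subset of the box $[\theta^{1-n/2},\theta]^{n/2}$, closed because the product constraint is continuous and the set stays away from zero. The objective is linear, hence continuous, so a minimizer $\alpha^*$ exists.

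Finally I combine this with \eqref{eq:infmean}, which applies since all $\lambda_i>0$:
\[
\frac n2\mathcal M[u](x)^{2/n}=\sum_i\alpha^*_i\lambda_i .
\]
Dividing by $n/2$ and raising to the power $n/2$ gives the stated formula for $\mathcal M[u](x)$. The only step needing care is the lower bound on the coordinates, which is where compactness comes from; the rest is direct.
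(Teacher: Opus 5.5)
Your proposal is correct and follows the paper's proof: for $\alpha$ outside $\mathcal A_{n/2}(\theta)$ you use the lower bound $\frac2n\sum_i\alpha_i\lambda_i\ge\frac2n\lambda_1\max_i\alpha_i>\mathcal L[u](x)$, you compare it with the value $\mathcal L[u](x)$ attained at $\alpha=(1,\dots,1)$, and you conclude by compactness. Your explicit checks go slightly beyond the paper, which leaves them implicit: that $\theta>n/2\ge1$, so the competitor lies in $\mathcal A_{n/2}(\theta)$, and that $\alpha_i\ge\theta^{1-n/2}$ on $\mathcal A_{n/2}(\theta)$, which gives compactness. (Your displayed chain for $\theta>1$ simply amounts to $\mathcal L[u](x)\ge\lambda_1$.)
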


\begin{proof}
Define
\[
    \mathcal{S}(\alpha) \coloneqq \frac{2}{n}\sum_{i=1}^{n/2}\alpha_i \lambda_i[u](x).
\]
Since $\mathcal{A}_{n/2}(\theta) \subset \mathcal{A}_{n/2}$, we only need to prove that
\begin{equation}\label{ineq:inf-realized-smaller-set}
    \inf_{\alpha\in\mathcal{A}_{n/2}} \mathcal{S}(\alpha) \geq \inf_{\substack{\alpha\in\mathcal{A}_{n/2}(\theta)}} \mathcal{S}(\alpha).
\end{equation}
We begin by observing that  
\[
   \mathcal{S}(\alpha) \geq \frac{2}{n} \lambda_1[u](x) \left(\max_{1\leq i\leq n/2}\alpha_i\right).
\]
As a consequence, by the choice of $\theta$, we obtain
\[
    \inf_{\substack{\alpha \in \mathcal{A}_{n/2} \setminus \mathcal{A}_{n/2}(\theta)}} \mathcal{S}(\alpha) \geq \frac{2}{n} \lambda_1[u](x) \theta > \mathcal{L}[u](x).
\]
Taking into account that $\inf_{\alpha\in\mathcal{A}_{n/2}} \mathcal{S}(\alpha) \leq \mathcal{L}u(x)$, we obtain
\[
    \inf_{\substack{\alpha \in \mathcal{A}_{n/2} \setminus \mathcal{A}_{n/2}(\theta)}} \mathcal{S}(\alpha) > \inf_{\substack{\alpha \in \mathcal{A}_{n/2}}} \mathcal{S}(\alpha),
\]
from which \eqref{ineq:inf-realized-smaller-set} follows from the fact that
\[
    \inf_{\alpha\in\mathcal{A}_{n/2}}\mathcal{S}(\alpha) = \min\left\{\inf_{\substack{\alpha \in \mathcal{A}_{n/2} \setminus \mathcal{A}_{n/2}(\theta)}}\mathcal{S}(\alpha), \inf_{\substack{\alpha \in \mathcal{A}_{n/2}(\theta)}}\mathcal{S}(\alpha)\right\}.
\]

Since the infimum may be restricted to the compact set \(\mathcal A_{n/2}(\theta)\), it is attained.
\end{proof}

We also have a slightly different result, ensuring that an infimum is attained.

\begin{lemma}\label{lemma:restrict-set-alpha-2}
Let $u\colon \mathcal{X} \to \mathbb{R}$ and let $f\colon \mathcal X\setminus \mathcal O\to (0,\infty)$. For every $x\in\mathcal X\setminus\mathcal O$, there exists $\alpha^*\in\mathcal A_{n/2}$ such that
\[
    F[u](x)=\mathcal Q_u(x;\alpha^*).
\]
\end{lemma}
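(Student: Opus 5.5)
The plan is a compactness argument on $\mathcal A_{n/2}$. The only possible failure of attainment comes from minimizing sequences that escape to infinity, and the strict positivity of $f$ rules these out. Throughout, fix $x\in\mathcal X\setminus\mathcal O$ and write $k=n/2$, $c=\frac n2 f(x)^{2/n}>0$ and $H_i=H_i[u](x)$.

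First I record the ordering $H_1\le H_2\le\dots\le H_k$. By Lemma~\ref{lemma:eigen-expression}, $H_i=\frac12(u(y_{2i-1})+u(y_{2i}))$ once the neighbors are sorted by the values of $u$, so this is clear. The case $k=1$ is trivial, since $\mathcal A_1=\{1\}$. So assume $k\ge2$, and rewrite
\[
\mathcal Q_u(x;\alpha)-H_1=\frac{\sum_{i=2}^{k}\alpha_i(H_i-H_1)-c}{\sum_{i=1}^k\alpha_i}.
\]
Since every term $\alpha_i(H_i-H_1)$ is nonnegative, this gives the lower bound $\mathcal Q_u(x;\alpha)\ge H_1-c/\sum_i\alpha_i$.

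Next I show that $F[u](x)<H_1$ strictly. Take $\alpha_1=t$ and $\alpha_i=t^{-1/(k-1)}$ for $i\ge2$, which lies in $\mathcal A_k$. As $t\to\infty$, the numerator above tends to $-c<0$ while the denominator stays positive. Hence $\mathcal Q_u(x;\alpha)<H_1$ for $t$ large.

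Now let $(\alpha^m)$ be a minimizing sequence in $\mathcal A_k$. Suppose $\max_i\alpha^m_i$ were unbounded along a subsequence. Then $\sum_i\alpha^m_i\to\infty$, and the lower bound gives $\liminf_m\mathcal Q_u(x;\alpha^m)\ge H_1>F[u](x)$, which contradicts minimality. Hence $\alpha^m_i\le M$ for all $i$ and $m$. The constraint $\prod_i\alpha_i=1$ then forces $\alpha^m_i\ge M^{-(k-1)}$. So the sequence lies in the compact set $\{\alpha\in\mathcal A_k: M^{-(k-1)}\le\alpha_i\le M\}$. A subsequence therefore converges to some $\alpha^*\in\mathcal A_k$, and continuity of $\alpha\mapsto\mathcal Q_u(x;\alpha)$ on $(0,\infty)^k$ yields $F[u](x)=\mathcal Q_u(x;\alpha^*)$.

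The main point, and the only delicate one, is excluding escape to infinity. It rests on $c>0$, which pushes the infimum strictly below the limiting value $H_1$. No convexity of $u$ is needed, in contrast with Proposition~\ref{lemma:restrict-set-alpha}.
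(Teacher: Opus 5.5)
Your proof is correct, but it takes a different route from the paper. You argue by compactness. Since $c>0$, you get $F[u](x)<H_1[u](x)$. Along any sequence in $\mathcal A_{n/2}$ with $\max_i\alpha_i\to\infty$, the bound $\mathcal Q_u(x;\alpha)\ge H_1[u](x)-c/\sum_i\alpha_i$ forces $\liminf\mathcal Q_u(x;\alpha)\ge H_1[u](x)$. So minimizing sequences stay in a compact subset of $\mathcal A_{n/2}$, and continuity gives attainment; finiteness of $F[u](x)$ comes for free. The paper instead computes the minimizer directly. With $m=n/2$, $h_i=H_i[u](x)$ and $\rho=f(x)^{1/m}$, it takes the unique $t^*<h_1$ with $\prod_{i=1}^m(h_i-t^*)=\rho^m$ and sets $\alpha_i^*=\rho/(h_i-t^*)$. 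It then writes $\mathcal Q_u(x;\alpha)=t^*+\bigl(\sum_i\alpha_i(h_i-t^*)-m\rho\bigr)/\sum_i\alpha_i$, so AM--GM gives $\mathcal Q_u(x;\alpha)\ge t^*$ for all $\alpha\in\mathcal A_m$, with equality at $\alpha^*$. Your argument needs less: only $c>0$, the ordering $H_1\le\dots\le H_m$, and continuity. It also produces the strict inequality \eqref{eq:F-strictly-below-H1} as a by-product, but it is non-constructive. The paper's argument gives more: the explicit minimizer and the value $F[u](x)=t^*$. This identifies the Bellman operator with the scalar root map $T_F$ of Section~\ref{sct:numerical-schemes}. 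The paper uses that identification later. The same construction proves \eqref{eq:F-strictly-below-H1} inside Theorem~\ref{thm:comparison-principle}. Lemma~\ref{lem:num-T-monotone} and the remark on fixed points of $T_f$ rely on $T_F$ equalling the infimum. If your proof replaced the paper's, those later steps would need the explicit formula $F[u](x)=t^*$ established separately.
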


\begin{proof}
Fix \(x\in\mathcal X\setminus\mathcal O\), set \(m=n/2\), and write
\[
    h_i\coloneqq H_i[u](x),
    \qquad
    \rho\coloneqq f(x)^{1/m}.
\]
Let $t \in (-\infty,\min_i h_i)$. By the arithmetic–geometric mean inequality, we have
\[
    \sum_{i=1}^{m} \alpha_i(h_i - t) \geq m \bigg(\prod_{i=1}^m(h_i - t)\bigg)^{1/m},
\]
where we used that $\prod_{i=1}^m \alpha_i = 1$, and the equality holds for
\[
    \alpha_i = \frac{\bigg(\prod_{j=1}^m(h_j - t) \bigg)^{1/m}}{h_i - t}.
\]
Now we define the function
\[
    \phi(t) \coloneqq \prod_{i=1}^m(h_i - t),
\]
and observe that $\phi$ is strictly decreasing in the interval $(-\infty,\min_i h_i)$, and 
\[
    \lim_{t\to-\infty}\phi(t)=+\infty,
    \qquad
    \lim_{t\uparrow h_1}\phi(t)=0.
\]
By continuity, there exists a unique $t^*$ so that $\phi(t^*)=\rho^m$. If we select
\[
    \alpha_i^* = \frac{\rho}{h_i - t^*},
\]
we have
\[
    \prod_{i=1}^{m}\alpha_i^*
    =
    \frac{\rho^m}{\prod_{i=1}^{m}(h_i-t^*)}
    =1,
\]
so \(\alpha^*\in\mathcal A_m\). Then, \(F[u](x) = \mathcal{Q}_u(x;\alpha^*)\). Indeed, if we let $\alpha \in \mathcal{A}_{n/2}$, we write 
\begin{align*}
    \mathcal{Q}_u(x;\alpha) & = t^* + \frac{\sum_{i=1}^{m} \alpha_i (h_i-t^*) - m\rho}{\sum_{i=1}^{m} \alpha_i}\\
    & \geq  t^* + \frac{m \bigg(\prod_{i=1}^m(h_i - t^*)\bigg)^{1/m} - m\rho}{\sum_{i=1}^{m} \alpha_i} = t^*,
\end{align*}
where the last equality follows from the choice of $t^*$. Taking the infimum on both sides, we have
\[
    F[u](x) \geq t^*.
\]
Recalling the definition of $\alpha_i^*$, we have $\alpha_i^*(h_i-t^*) = \rho$, and so
\[
    \sum_{i=1}^{m} \alpha_i^*(h_i-t^*) = m\rho,
\]
but this means that $t^* = \mathcal{Q}_u(x;\alpha^*)$, and so 
\[
    F[u](x) \geq \mathcal{Q}_u(x;\alpha^*).
\]
The other inequality follows directly from the definition of \(F[u](x)\).
\end{proof}

Now, we present the result that provides an alternative way of writing the inhomogeneous Monge--Amp\`ere equation.

\begin{theorem}\label{theorem:equivalent-formulation}
Let \(f\colon\mathcal X\setminus\mathcal O\to(0,\infty)\).
A function \(u\colon\mathcal X\to\mathbb R\) satisfies
\begin{equation}\label{eq:closed-form-MA}
    u(x)=F[u](x),
    \qquad x\in\mathcal X\setminus\mathcal O,
\end{equation}
if and only if it is strictly graph-convex on
\(\mathcal X\setminus\mathcal O\) and satisfies
\[
    \mathcal M[u](x)=f(x),
    \qquad x\in\mathcal X\setminus\mathcal O.
\]
\end{theorem}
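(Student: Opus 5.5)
The plan is to reduce both directions to the scalar characterization of $F[u](x)$ obtained in the proof of Lemma~\ref{lemma:restrict-set-alpha-2}. Fix $x\in\mathcal X\setminus\mathcal O$ and keep that lemma's notation: $m=n/2$, $h_i=H_i[u](x)$, $\rho=f(x)^{1/m}$, and $\phi(t)=\prod_{i=1}^m(h_i-t)$ on $(-\infty,\min_i h_i)$. That proof shows that $F[u](x)=t^*$, where $t^*<\min_i h_i$ is the unique point of this interval with $\phi(t^*)=\rho^m=f(x)$. Since $u$ enters $F[u](x)$ only through the $h_i$, everything comes down to comparing the real number $u(x)$ with $t^*$.

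\emph{($\Leftarrow$)} Assume $u$ is strictly graph convex on $\mathcal X\setminus\mathcal O$ and $\mathcal M[u](x)=f(x)$.
\begin{itemize}
\item First, use Lemma~\ref{lemma:eigen-expression} to note that strict graph convexity gives $\lambda_1[u](x)=\tfrac12(u(y_1)+u(y_2))-u(x)>0$.
\item Since the $\lambda_i$ are ordered increasingly in the order-statistic formula, this means $u(x)<h_1=\min_i h_i$. So $t=u(x)$ lies in the admissible interval.
\item There $\phi(u(x))=\prod_i\lambda_i[u](x)=\mathcal M[u](x)=f(x)=\rho^m$.
\item By the uniqueness of the root of $\phi=\rho^m$ on that interval (strict monotonicity of $\phi$), $u(x)=t^*=F[u](x)$.
\end{itemize}

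\emph{($\Rightarrow$)} Assume $u(x)=F[u](x)$ for every $x\in\mathcal X\setminus\mathcal O$. Then $u(x)=t^*$, and the lemma's construction gives $t^*<\min_i h_i$. Hence $\lambda_1[u](x)=h_1-u(x)>0$.

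By Lemma~\ref{lemma:eigen-expression}, $\lambda_1[u](x)$ is the average of the two smallest neighbor values minus $u(x)$. Positivity therefore gives $\tfrac12(u(y)+u(z))>u(x)$ for every pair of distinct neighbors $y,z$, since any such pair has average at least that of the two smallest. This is strict graph convexity at $x$. Finally, $\mathcal M[u](x)=\prod_i(h_i-t^*)=\phi(t^*)=\rho^m=f(x)$.

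The main point requiring care is the claim $F[u](x)=t^*$ with $t^*$ strictly below $\min_i h_i$. I would re-check this from the lemma's proof; it does not depend on any convexity of $u$, since the lemma applies to arbitrary $u$. One should also make sure that $h_1=\min_i h_i$, which holds because $H_i$ is nondecreasing in $i$. Everything else is bookkeeping with Lemma~\ref{lemma:eigen-expression}.
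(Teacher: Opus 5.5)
Your argument is correct, but it takes a different route from the paper.

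You take from the \emph{proof} of Lemma \ref{lemma:restrict-set-alpha-2} the explicit value $F[u](x)=t^*$. Here $t^*$ is the unique root in $(-\infty,h_1)$ of $\prod_i(h_i-t)=f(x)$; that proof shows both $\mathcal Q_u(x;\alpha)\ge t^*$ for all $\alpha$ and $\mathcal Q_u(x;\alpha^*)=t^*$. Both implications then reduce to one strictly decreasing scalar function. For $(\Leftarrow)$, strict convexity puts $u(x)$ in $(-\infty,h_1)$, where it solves the same root equation, so $u(x)=t^*$. For $(\Rightarrow)$, $u(x)=t^*<h_1$ gives $\lambda_1[u](x)>0$, which is exactly strict graph convexity at $x$, and $\mathcal M[u](x)=\phi(t^*)=f(x)$.

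The paper does not use this closed form here and instead works with the infimum over $\alpha$ directly. It proves $(\Leftarrow)$ via the identity \eqref{eq:infmean}, which gives $u(x)\le\mathcal Q_u(x;\alpha)$ for all $\alpha$, plus a minimizing sequence with $\sum_i\alpha_i^k\ge n/2$. It proves $(\Rightarrow)$ via \eqref{eq:infmean} together with only the \emph{statement} of the lemma, namely an attained minimizer $\alpha^*$ with $\mathcal Q_u(x;\alpha^*)=u(x)$. That variational route stays closer to the Bellman viewpoint and never needs the minimizer explicitly.

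However, in the $(\Rightarrow)$ direction the paper applies \eqref{eq:infmean}, which presupposes $\lambda_i[u](x)\ge0$, and it leaves strict graph convexity implicit. This is recoverable, since $\frac n2 f(x)^{2/n}\le\sum_i\alpha_i\lambda_i[u](x)$ for all $\alpha\in\mathcal A_{n/2}$ forces every $\lambda_i[u](x)>0$. Your route is shorter and gets strict convexity directly from $t^*<h_1$. It is also the same device the paper uses later in \eqref{eq:F-strictly-below-H1} and in defining $T_F$.
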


\begin{proof}
Fix $x \in \mathcal{X} \setminus \mathcal{O}$. First, we argue that $\mathcal{Q}_u$ is bounded from below in $\mathcal{A}_{n/2}$, which ensures $\inf_{\alpha \in \mathcal{A}_{n/2}} \mathcal{Q}_u(x;\alpha)$ is well-defined. Indeed, from \(\mathcal{M}[u](x) = f(x)\), we have
\[
    \frac{n}{2}f(x)^{\frac{2}{n}} \leq \sum_{i=1}^{n/2}\alpha_i \left(H_i[u](x)-u(x)\right), \quad \text{for every} \quad \alpha \in \mathcal{A}_{n/2}, 
\]
which implies that
\[
    u(x) \sum_{i=1}^{n/2}\alpha_i \leq \sum_{i=1}^{n/2}\alpha_i H_i[u](x) - \frac{n}{2}f(x)^{\frac{2}{n}}, \quad \text{for every} \quad \alpha \in \mathcal{A}_{n/2}.
\]
Since $\sum_{i=1}^{n/2}\alpha_i > 0$ for every $\alpha \in \mathcal{A}_{n/2}$, we obtain
\[
    u(x) \leq \mathcal{Q}_u(x;\alpha), \quad \text{for every} \quad \alpha \in \mathcal{A}_{n/2}.
\]
This implies that $\mathcal{Q}_u$ is bounded from below in $\mathcal{A}_{n/2}$, and taking the infimum on both sides gives 
\begin{equation}\label{eq:closed-form-MA.99}
    u(x) \leq F[u](x).
\end{equation}
To prove the reverse inequality, we proceed as follows: take a minimizing sequence in \(\mathcal{M}[u](x) = f(x)\), $(\alpha^k)_{k \in \mathbb{N}} \subseteq \mathcal{A}_{n/2}$ such that
\begin{equation}\label{eq:minimizing-sequence}
    \sum_{i=1}^{n/2}\alpha^k_i \left(H_i[u](x)-u(x)\right) \to \frac{n}{2}f(x)^{2/n} \quad \text{as} \quad k \to \infty.
\end{equation}
As a consequence of the arithmetic-geometric mean inequality, we have that
\begin{equation}\label{eq:bounded-from-below-min-sequence}
    \sum_{i=1}^{n/2} \alpha_i^k \geq \frac{n}{2},
\end{equation}
for every $k \in \mathbb{N}$. We can then write
\[
    F[u](x) \leq \mathcal{Q}_u(x;\alpha^k) = \frac{\sum_{i=1}^{n/2}\alpha^k_i \left(H_i[u](x) - u(x) \right) - \frac{n}{2}f(x)^{2/n}}{\sum_{i=1}^{n/2} \alpha_i^k} + u(x).
\]
Using \eqref{eq:minimizing-sequence} and \eqref{eq:bounded-from-below-min-sequence}, we can pass to the limit in the inequality above as $k \to \infty$ to obtain
\[
    F[u](x) \leq u(x),
\]
which proves \eqref{eq:closed-form-MA}.

Now we prove the converse statement. Assuming $u$ has the form \eqref{eq:closed-form-MA}, we have
\[
    u(x) \leq \mathcal{Q}_u(x;\alpha), \quad \text{for every} \quad \alpha \in \mathcal{A}_{n/2},
\]
which implies
\[
    \frac{n}{2}f(x)^{\frac{2}{n}} \leq \sum_{i=1}^{n/2}\alpha_i(H_i[u](x) - u(x)), \quad \text{for every} \quad \alpha \in \mathcal{A}_{n/2}.
\]
Taking the infimum on both sides gives us \(\mathcal{M}[u](x) \geq f(x)\). To prove the other inequality, we use Lemma \ref{lemma:restrict-set-alpha-2} to obtain $\alpha^*$ such that
\[
    \mathcal{Q}_u(x;\alpha^*) = u(x).
\]
We then write
\begin{align*}
    \frac n2\bigl(\mathcal M[u](x)\bigr)^{2/n} & \leq \sum_{i=1}^{n/2}\alpha^*_i \left(H_i[u](x)-u(x)\right)\\
                    & = \sum_{i=1}^{n/2}\alpha^*_i H_i[u](x) - u(x) \sum_{i=1}^{n/2}\alpha^*_i - \frac{n}{2}f(x)^{\frac{2}{n}} + \frac{n}{2}f(x)^{\frac{2}{n}}\\
                    & = \left(\sum_{i=1}^{n/2}\alpha^*_i\right) \left(\mathcal{Q}_u(x;\alpha^*) - u(x)\right) + \frac{n}{2}f(x)^{\frac{2}{n}}\\
                    & = \frac{n}{2}f(x)^{\frac{2}{n}},
\end{align*}
which implies \(\mathcal{M}[u](x) \leq f(x)\).
\end{proof}

\subsection{Comparison principle}

In this section, we prove the comparison principle for subsolutions and supersolutions to \eqref{eq:M-A}. We start with a result related to locality.

\begin{proposition}\label{prop:propagation-to-boundary}
Let \(u,v\colon\mathcal X\to \mathbb R\) satisfy \(u\le v\) in \(\mathcal X\) and
\[
    u(x_0)=v(x_0)
\]
for some \(x_0\in \mathcal X\). Assume moreover that
\[
    H_i[u](x_0)=H_i[v](x_0)\qquad \text{for every } i\in\{1,\dots,n/2\}.
\]
Then \(u=v\) on \(\mathcal N_{x_0}\).
\end{proposition}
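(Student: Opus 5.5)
The plan is to reduce everything to order statistics of the neighbor values and use a sum argument. Write the neighbors of \(x_0\) as \(\mathcal N_{x_0}=\{y_1,\dots,y_n\}\). Let \(a_1\le a_2\le\dots\le a_n\) be the values \(u(y_1),\dots,u(y_n)\) in nondecreasing order, and \(b_1\le\dots\le b_n\) the values of \(v\) on \(\mathcal N_{x_0}\), sorted the same way. (The two orderings of the vertices may differ.)

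By Lemma \ref{lemma:eigen-expression}, together with \(\lambda_i=H_i-u(x_0)\), we get
\[
H_i[u](x_0)=\tfrac12(a_{2i-1}+a_{2i}),\qquad H_i[v](x_0)=\tfrac12(b_{2i-1}+b_{2i}),\qquad i=1,\dots,n/2.
\]
The hypothesis \(u(x_0)=v(x_0)\) is not even needed for this, since \(H_i\) does not involve the value at \(x_0\).

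The key step is monotonicity of order statistics: \(u\le v\) on \(\mathcal N_{x_0}\) implies \(a_k\le b_k\) for every \(k\). I would justify this through the characterization
\[
a_k=\min_{\substack{S\subseteq\mathcal N_{x_0}\\|S|=k}}\ \max_{y\in S}u(y),
\]
and the analogous formula for \(b_k\), whose right-hand side is pointwise monotone in the function. A direct alternative: the \(k\) neighbors carrying \(b_1,\dots,b_k\) have \(u\)-values at most \(b_k\), so at least \(k\) of the \(u\)-values are \(\le b_k\), which gives \(a_k\le b_k\).

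Combining this with \(H_i[u](x_0)=H_i[v](x_0)\), we have \(a_{2i-1}+a_{2i}=b_{2i-1}+b_{2i}\) with \(a_{2i-1}\le b_{2i-1}\) and \(a_{2i}\le b_{2i}\). Hence both inequalities are equalities, so \(a_k=b_k\) for all \(k\). In particular
\[
\sum_{y\in\mathcal N_{x_0}}\bigl(v(y)-u(y)\bigr)=\sum_{k}b_k-\sum_k a_k=0.
\]
Each term \(v(y)-u(y)\) is nonnegative, so every term vanishes, and therefore \(u=v\) on \(\mathcal N_{x_0}\).

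The only point needing care is the one flagged above. Equality of the sorted values alone does not give equality vertex-by-vertex, because the sorting permutations for \(u\) and \(v\) may differ. The final sum argument, which uses pointwise \(u\le v\), is exactly what closes this gap.
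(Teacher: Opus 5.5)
Your proof is correct and follows essentially the same route as the paper's. The paper also sorts the neighbor values, proves $a_k\le b_k$ by the same counting argument, and uses the order-statistic form of $H_i$ to force $a_k=b_k$. It then closes with the same sum argument, exploiting pointwise $u\le v$, and likewise never uses $u(x_0)=v(x_0)$.
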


\begin{proof}
Let \(n=|\mathcal N_{x_0}|\), and let
\[
    a_1\le\cdots\le a_n,
    \qquad
    b_1\le\cdots\le b_n
\]
be the nondecreasing rearrangements of
\[
    \{u(y):y\in\mathcal N_{x_0}\}
    \quad\text{and}\quad
    \{v(y):y\in\mathcal N_{x_0}\},
\]
respectively.

We first claim that
\[
    a_k\le b_k,\qquad k=1,\dots,n.
\]
Indeed, at least \(k\) neighbors satisfy \(v(y)\le b_k\). For each of
these neighbors, the assumption \(u\le v\) gives
\[
    u(y)\le v(y)\le b_k.
\]
Thus at least \(k\) of the neighbor values of \(u\) are at most \(b_k\),
which implies \(a_k\le b_k\).

By the order-statistic representation of \(H_i\),
\[
    a_{2i-1}+a_{2i}
    =
    2H_i[u](x_0)
    =
    2H_i[v](x_0)
    =
    b_{2i-1}+b_{2i},
    \qquad i=1,\dots,\frac n2.
\]
Since
\[
    a_{2i-1}\le b_{2i-1}
    \quad\text{and}\quad
    a_{2i}\le b_{2i},
\]
equality of the sums implies
\[
    a_{2i-1}=b_{2i-1},
    \qquad
    a_{2i}=b_{2i}.
\]
Consequently \(a_k=b_k\) for every \(k=1,\dots,n\), and hence
\[
    \sum_{y\in\mathcal N_{x_0}}u(y)
    =
    \sum_{y\in\mathcal N_{x_0}}v(y).
\]
Because \(u(y)\le v(y)\) at every neighbor, we have
\[
    \sum_{y\in\mathcal N_{x_0}}\bigl(v(y)-u(y)\bigr)=0,
\]
with every summand nonnegative. Therefore \(u(y)=v(y)\) for every
\(y\in\mathcal N_{x_0}\).
\end{proof}

Now we can prove the comparison principle.

\begin{theorem}\label{thm:comparison-principle}
Let \(f\colon\mathcal X\setminus\mathcal O
\to(0,\infty)\), and let \(u,v\colon\mathcal X\to\mathbb R\) satisfy
\begin{equation}\label{eq:bvp-sub}
    \begin{cases}
        \displaystyle
        u(x) \leq F[u](x),
        & x\in\mathcal X\setminus\mathcal O,\\[3ex]
        u(x)\leq g(x),
        & x\in\mathcal O,
    \end{cases}
\end{equation}
and
\begin{equation}\label{eq:bvp-super}
    \begin{cases}
        \displaystyle
        v(x) \geq F[v](x),
        & x\in\mathcal X\setminus\mathcal O,\\[3ex]
        v(x)\geq g(x),
        & x\in\mathcal O.
    \end{cases}
\end{equation}
Then
\[
    u\leq v
    \qquad\text{in }\mathcal X.
\]
\end{theorem}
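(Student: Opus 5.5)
The plan is a strong maximum principle argument. Set $M\coloneqq\max_{\mathcal X}(u-v)$, which exists because $\mathcal X$ is finite, and suppose for contradiction that $M>0$. Since $u\le g\le v$ on $\mathcal O$, the maximum cannot be attained on $\mathcal O$. Let
\[
    Z\coloneqq\{x\in\mathcal X: u(x)-v(x)=M\}\subseteq\mathcal X\setminus\mathcal O.
\]
I will show that $Z$ is closed under taking neighbors. I will then use connectivity of the graph: I assume, as is implicit in the setting, that every connected component of $\mathcal G$ contains a vertex of $\mathcal O$. Under this assumption $Z$ must contain a boundary vertex, which is a contradiction.

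For the neighbor-closure step, fix $x_0\in Z$ and set $w\coloneqq v+M$. Then $w\ge u$ on $\mathcal X$ and $w(x_0)=u(x_0)$. I will record two elementary properties of the order-statistic operators.
\begin{itemize}
    \item Translation equivariance: $H_i[v+M]=H_i[v]+M$, hence $\mathcal Q_{v+M}(x;\alpha)=\mathcal Q_v(x;\alpha)+M$ and $F[w]=F[v]+M$.
    \item Monotonicity: $u\le w$ implies $H_i[u]\le H_i[w]$ by the min--max definition, hence $\mathcal Q_u(x;\alpha)\le\mathcal Q_w(x;\alpha)$ for each $\alpha$, because $\alpha_i>0$.
\end{itemize}
By Lemma \ref{lemma:restrict-set-alpha-2} applied to $w$, choose $\alpha^*\in\mathcal A_{n/2}$ with $F[w](x_0)=\mathcal Q_w(x_0;\alpha^*)$. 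Then we have the chain
\[
    u(x_0)\le F[u](x_0)\le \mathcal Q_u(x_0;\alpha^*)\le \mathcal Q_w(x_0;\alpha^*)=F[v](x_0)+M\le v(x_0)+M=u(x_0).
\]
Every inequality in it is therefore an equality. In particular $\mathcal Q_u(x_0;\alpha^*)=\mathcal Q_w(x_0;\alpha^*)$, that is,
\[
    \sum_i\alpha_i^*\bigl(H_i[w](x_0)-H_i[u](x_0)\bigr)=0.
\]
Each summand is nonnegative and each $\alpha_i^*>0$, so $H_i[u](x_0)=H_i[w](x_0)$ for every $i$.

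Proposition \ref{prop:propagation-to-boundary}, applied to $u\le w$ at $x_0$, now gives $u=w$ on $\mathcal N_{x_0}$. Thus $u-v=M$ on $\mathcal N_{x_0}$, so $\mathcal N_{x_0}\subseteq Z$. Starting from any point of $Z$ and following a path to $\mathcal O$, induction along the path keeps every vertex in $Z$. Since $Z\cap\mathcal O=\emptyset$, this is impossible, so $M\le0$ and $u\le v$.

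The main obstacle is extracting the equality of all the $H_i$ at $x_0$. This is where attainment of the infimum is essential. If one only had a minimizing sequence $\alpha^k$, some weights could degenerate to $0$, and the equality $\sum\alpha_i^k(H_i[w]-H_i[u])\to0$ would not force termwise equality. The strict positivity of the attained minimizer from Lemma \ref{lemma:restrict-set-alpha-2}, namely $\alpha_i^*=\rho/(h_i-t^*)>0$, is exactly what makes the argument work. A secondary point to state carefully is the connectivity hypothesis linking every unlabeled vertex to $\mathcal O$. Without it, the conclusion can fail on a component with no boundary vertices, since adding a constant to $v$ there preserves the supersolution property.
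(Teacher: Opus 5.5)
Your neighbor-closure step is correct and matches the paper's: the chain through the attained minimizer $\alpha^*$ of Lemma~\ref{lemma:restrict-set-alpha-2}, the positivity of the $\alpha_i^*$, and Proposition~\ref{prop:propagation-to-boundary} give $\mathcal N_{x_0}\subseteq Z$ for every $x_0\in Z$.

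The gap is in how you conclude. You assume that every connected component of $\mathcal G$ meets $\mathcal O$. That is not a hypothesis of the theorem, and it is not implicit in the setting: $\mathcal G$ is an arbitrary finite simple graph, and the paper explicitly allows $\mathcal O=\emptyset$ (the fully connected three-vertex example). As written, you therefore prove a weaker statement.

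The missing idea is that $f>0$ gives the strict inequality
\[
    F[z](x)<H_1[z](x)\qquad\text{for all } z\colon\mathcal X\to\mathbb R \text{ and } x\in\mathcal X\setminus\mathcal O .
\]
To see this, set $m=n/2$ and $h_i=H_i[z](x)$. Let $t<h_1$ be the unique number with $\prod_{i=1}^m(h_i-t)=f(x)$, and put $\alpha_i=f(x)^{1/m}/(h_i-t)$. Then $\alpha\in\mathcal A_m$ and $\mathcal Q_z(x;\alpha)=t$, so $F[z](x)\le t<h_1$.

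With this inequality, no path to $\mathcal O$ is needed. Choose $\widehat x\in Z$ maximizing $u$ over $Z$. Since $\mathcal N_{\widehat x}\subseteq Z$, every neighbor value of $u$ at $\widehat x$ is at most $u(\widehat x)$. Hence $H_1[u](\widehat x)\le u(\widehat x)$, which contradicts $u(\widehat x)\le F[u](\widehat x)<H_1[u](\widehat x)$. This is how the paper finishes, and it uses no connectivity at all.

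For the same reason, your closing remark is false. Let $C$ be a component with $C\cap\mathcal O=\emptyset$. Every vertex of $C$ is unlabeled and all its neighbors lie in $C$, so maximizing any subsolution $u$ over $C$ produces the same contradiction. Thus no subsolution exists in that situation, and the comparison principle holds vacuously rather than failing. Your translation argument would give a counterexample only if a subsolution existed on $C$; this is precisely the obstruction in the paper's three-vertex example.

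A combinatorial hypothesis is needed for the comparison principle only in the homogeneous case $f=0$, where the strict inequality above is lost. There the paper imposes \eqref{eq:H1-reachability}, which is stronger than connectivity to $\mathcal O$.
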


\begin{proof}
Set \( m\coloneqq n/2\).  The monotonicity of the operator \(F\) follows from the monotonicity of each \(H_i\), and moreover
\begin{equation}\label{eq:F-translation}
    F[z+c](x)=F[z](x)+c.
\end{equation}
We first prove that
\begin{equation}\label{eq:F-strictly-below-H1}
    F[z](x)<H_1[z](x),
\end{equation}
for every \(z\colon\mathcal X\to\mathbb R\) and every
\(x\in\mathcal X\setminus\mathcal O\). Indeed, write
\[
    h_i\coloneqq H_i[z](x),
    \qquad i=1,\dots,m.
\]
Since \(h_1\leq\cdots\leq h_m\) and \(f(x)>0\), there exists a unique
\(t<h_1\) such that
\[
    \prod_{i=1}^{m}(h_i-t)=f(x).
\]
Define
\[
    \alpha_i
    \coloneqq
    \frac{f(x)^{1/m}}{h_i-t},
    \qquad i=1,\dots,m.
\]
Then \(\alpha_i>0\) and
\[
    \prod_{i=1}^{m}\alpha_i
    =
    \frac{f(x)}
    {\prod_{i=1}^{m}(h_i-t)}
    =1,
\]
so \(\alpha\in\mathcal A_m\). Moreover,
\[
    \alpha_i(h_i-t)=f(x)^{1/m},
\]
and therefore
\[
\mathcal{Q}_z(x;\alpha) = t+ \frac{\sum_{i=1}^{m}\alpha_i(h_i-t) -m f(x)^{1/m}}{\sum_{i=1}^{m}\alpha_i}=t.
\]
It follows that
\[
    F[z](x)\leq t<h_1=H_1[z](x),
\]
which proves \eqref{eq:F-strictly-below-H1}.

Suppose, seeking a contradiction, that
\[
    M
    \coloneqq
    \max_{x\in\mathcal X}\bigl(u(x)-v(x)\bigr)>0.
\]
Define \(A \coloneqq \left\{x\in\mathcal X \colon u(x)-v(x)=M\right\}\) and observe that since \(u\leq g\leq v\) on \(\mathcal O\), we have \(A\subseteq\mathcal X\setminus\mathcal O\). Setting \(w\coloneqq v+M\), we obtain \(u\leq w\) in \(\mathcal X\), and
\[
    u(x)=w(x)
    \qquad\text{for every }x\in A.
\]

Fix \(x\in A\). Using the subsolution inequality for \(u\), the
monotonicity and translation invariance of \(F\), and the supersolution inequality for \(v\), we obtain
\[
\begin{aligned}
    u(x) \leq F[u](x) \leq F[w](x) = F[v](x)+M \leq v(x)+M=u(x).
\end{aligned}
\]
In particular
\begin{equation}\label{eq:F-contact}
    F[u](x)=F[w](x).
\end{equation}
By Lemma \ref{lemma:restrict-set-alpha-2}, there exists
\(\alpha^*\in\mathcal A_m\) such that \(F[w](x)=\mathcal{Q}_w(x;\alpha^*)\). Using \eqref{eq:F-contact} and the definition of \(F[u]\), we get
\[
    \mathcal{Q}_w(x;\alpha^*)
    =
    F[w](x)
    =
    F[u](x)
    \leq
    \mathcal{Q}_u(x;\alpha^*).
\]
On the other hand, \(u\leq w\) implies \(\mathcal{Q}_u(x;\alpha^*)\leq \mathcal{Q}_w(x;\alpha^*)\), and hence
\[
    \mathcal{Q}_u(x;\alpha^*)=\mathcal{Q}_w(x;\alpha^*).
\]
Consequently,
\[
    0
    =
    \mathcal{Q}_w(x;\alpha^*)-\mathcal{Q}_u(x;\alpha^*)
    =
    \frac{
        \displaystyle
        \sum_{i=1}^{m}\alpha_i^*
        \bigl(H_i[w](x)-H_i[u](x)\bigr)
    }{
        \displaystyle\sum_{i=1}^{m}\alpha_i^*
    }.
\]
Every term in the numerator is nonnegative, and every
\(\alpha_i^*\) is positive. Therefore,
\[
    H_i[u](x)=H_i[w](x),
    \qquad i=1,\dots,m.
\]

Since \(u\leq w\), \(u(x)=w(x)\), and all the corresponding \(H_i\) agree at \(x\), Proposition \ref{prop:propagation-to-boundary} gives \(u=w\) on \(\mathcal N_x\), and so
\begin{equation}\label{eq:contact-set-closed}
    \mathcal N_x\subseteq A
    \qquad\text{for every }x\in A.
\end{equation}
Since \(A\) is finite and nonempty, choose
\(\widehat x\in A\) such that
\[
    u(\widehat x)=\max_{x\in A}u(x).
\]
By \eqref{eq:contact-set-closed}, every neighbor of \(\widehat x\)
belongs to \(A\). Therefore,
\[
    u(y)\leq u(\widehat x)
    \qquad\text{for every }y\in\mathcal N_{\widehat x},
\]
which implies
\[
    H_1[u](\widehat x)\leq u(\widehat x).
\]
However, the subsolution inequality and
\eqref{eq:F-strictly-below-H1} give
\[
    u(\widehat x)
    \leq
    F[u](\widehat x)
    <
    H_1[u](\widehat x)
    \leq
    u(\widehat x),
\]
which is impossible.

We conclude that
\[
    \max_{x\in\mathcal X}\bigl(u(x)-v(x)\bigr)\leq0,
\]
and therefore \(u\leq v\) in \(\mathcal X\).
\end{proof}

As a consequence of the comparison principle, we obtain uniqueness of solutions.

\begin{corollary}\label{cor:uniqueness-solutions}
A solution $u\colon \mathcal{X} \to \mathbb{R}$ to \eqref{eq:M-A}, whenever it exists, is unique.
\end{corollary}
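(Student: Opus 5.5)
The plan is to deduce uniqueness directly from the comparison principle, Theorem \ref{thm:comparison-principle}, after recasting solutions of \eqref{eq:M-A} in the Bellman form. The notion of solution for \eqref{eq:M-A} is that of a strictly graph-convex function $u$ with $\mathcal M[u]=f$ in $\mathcal X\setminus\mathcal O$ and $u=g$ on $\mathcal O$. Strict convexity is what guarantees nonnegative eigenvalues, so that \eqref{eq:infmean} applies.

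Suppose $u_1$ and $u_2$ are two such solutions. The first step is to invoke Theorem \ref{theorem:equivalent-formulation}. Since each $u_j$ is strictly graph-convex on $\mathcal X\setminus\mathcal O$ and satisfies $\mathcal M[u_j]=f$ there, we get
\[
u_j(x)=F[u_j](x),\qquad x\in\mathcal X\setminus\mathcal O,\quad j=1,2.
\]
On $\mathcal O$ we also have $u_1=u_2=g$.

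The second step is to observe that each $u_j$ is simultaneously a subsolution in the sense of \eqref{eq:bvp-sub} and a supersolution in the sense of \eqref{eq:bvp-super}, since all the inequalities there hold as equalities. Applying Theorem \ref{thm:comparison-principle} with $u=u_1$, $v=u_2$ gives $u_1\le u_2$ in $\mathcal X$. Swapping the roles gives $u_2\le u_1$, so $u_1=u_2$.

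There is essentially no obstacle. The only point requiring care is that the solution class must be the strictly graph-convex one, so that Theorem \ref{theorem:equivalent-formulation} applies and the product equation is equivalent to the fixed-point equation $u=F[u]$. If one instead reads \eqref{eq:M-A} literally as the infimum form, the same conclusion holds by the argument in the first half of the proof of Theorem \ref{theorem:equivalent-formulation}. That argument uses only the infimum identity, with the minimizing sequence and the bound $\sum\alpha_i\ge n/2$, to show $u=F[u]$. Either way, comparison then closes the argument.
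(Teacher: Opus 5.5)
Your proposal is correct and matches the paper's proof: pass to the Bellman form $u=F[u]$ via Theorem~\ref{theorem:equivalent-formulation}, then apply the comparison principle, Theorem~\ref{thm:comparison-principle}, in both directions. Your side remark is also accurate: if \eqref{eq:M-A} is read literally in its infimum form, the first half of that theorem's proof (the minimizing sequence together with $\sum_i\alpha_i\ge n/2$) already yields $u=F[u]$.
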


\begin{proof}
Let \(u\) and \(v\) be two solutions. By Theorem \ref{theorem:equivalent-formulation}, $u$ and $v$ have the form \eqref{eq:closed-form-MA}. We can then use Theorem \ref{thm:comparison-principle} to obtain $u = v$.
\end{proof}

\subsection{Perron's method}

In this section, we prove the existence of solutions to \eqref{eq:M-A} via the Perron method, whose main step consists of proving that there exists at least one subsolution. Since being a solution to the equation in \eqref{eq:M-A} is equivalent, via Theorem \ref{theorem:equivalent-formulation}, to solving \eqref{eq:closed-form-MA}, we deal with the latter.

\medskip

We consider the set
\[
\mathcal{S}_{\mathcal{M}} \coloneqq
\left\{
v \colon \mathcal{X} \to \mathbb{R}
\;\middle|\;
\begin{array}{ll}
\displaystyle
v(x) \leq F[v](x),
& x \in \mathcal{X} \setminus \mathcal{O},
\\[1.2ex]
v(x) \leq g(x),
& x \in \mathcal{O}.
\end{array}
\right\}
\]
of all subsolutions to the Monge--Amp\`ere Dirichlet problem. 

The following theorem guarantees existence under certain conditions.

\begin{theorem}\label{thm:large-sub}
Assume $\mathcal{S}_{\mathcal{M}}$ is not empty, and that for each $x \in \mathcal{X}$, there exists $M(x)$ such that
\[
    v(x) \leq M(x) \quad \text{for all} \quad v \in \mathcal{S}_{\mathcal{M}}.
\]
Then, the function $u\colon \mathcal{X} \to \mathbb{R}$ defined by
\[
    u(x) \coloneqq \sup_{v \in \mathcal{S}_{\mathcal{M}}}v(x)
\]
is the unique solution to \eqref{eq:M-A}.
\end{theorem}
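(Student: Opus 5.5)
The plan is the standard discrete Perron argument in three steps. First, show that $u$ is a subsolution. Second, show that $u$ is a supersolution, arguing by contradiction with a local bump. Third, conclude with the comparison principle and Theorem \ref{theorem:equivalent-formulation}.

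\emph{Step 1: $u$ is well defined and is a subsolution.} By hypothesis $\mathcal S_{\mathcal M}\neq\emptyset$ and $v(x)\le M(x)$ for all $v\in\mathcal S_{\mathcal M}$. Hence $u$ is finite on $\mathcal X$, and $u\le g$ on $\mathcal O$ holds trivially. For $x\in\mathcal X\setminus\mathcal O$ and any $v\in\mathcal S_{\mathcal M}$ we have $v\le u$ pointwise. The order-statistic operators $H_i$ are monotone, so $\mathcal Q_v(x;\alpha)\le\mathcal Q_u(x;\alpha)$ for every $\alpha$, and therefore $F[v](x)\le F[u](x)$. Thus $v(x)\le F[v](x)\le F[u](x)$, and taking the supremum over $v$ gives $u(x)\le F[u](x)$. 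So $u\in\mathcal S_{\mathcal M}$, and $u$ is the maximal subsolution.

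\emph{Step 2: $u$ is a supersolution.} Suppose, for contradiction, that $u(x_0)<F[u](x_0)$ at some $x_0\in\mathcal X\setminus\mathcal O$. Set $\delta:=F[u](x_0)-u(x_0)>0$ and define $w:=u+\varepsilon\mathbbm 1_{\{x_0\}}$ with $0<\varepsilon\le\delta$. Then $w\in\mathcal S_{\mathcal M}$, which contradicts maximality since $w(x_0)>u(x_0)$. We check the three conditions as follows.
\begin{itemize}
\item[(a)] The boundary condition is unchanged, because $x_0\notin\mathcal O$.
\item[(b)] At $x_0$, the values $H_i[w](x_0)$ depend only on neighbours of $x_0$, and the graph is simple, so $x_0\notin\mathcal N_{x_0}$. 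Hence $H_i[w](x_0)=H_i[u](x_0)$, so $F[w](x_0)=F[u](x_0)=u(x_0)+\delta\ge w(x_0)$.
\item[(c)] At $x\ne x_0$, we have $w\ge u$, so monotonicity of $F$ gives $w(x)=u(x)\le F[u](x)\le F[w](x)$.
\end{itemize}
The step needing the most care is this monotonicity of $F$ together with its locality. Both follow directly from the order-statistic formula of Lemma \ref{lemma:eigen-expression} and the positivity of the $\alpha_i$. Finiteness of $F$ follows from the estimate $F[z](x)<H_1[z](x)$ established in the proof of Theorem \ref{thm:comparison-principle}, so no inequality above is vacuous.

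\emph{Step 3: conclusion.} Steps 1 and 2 give $u=F[u]$ on $\mathcal X\setminus\mathcal O$. For the boundary, the same bump argument should give $u=g$ on $\mathcal O$: if $u(x_0)<g(x_0)$ at some $x_0\in\mathcal O$, raise $u$ at $x_0$ by $\varepsilon\le g(x_0)-u(x_0)$. This does not change $F$ at $x_0$, and it only increases $F$ elsewhere, so the result remains a subsolution and contradicts maximality. Then Theorem \ref{theorem:equivalent-formulation} shows that $u$ is strictly graph convex and solves $\mathcal M[u]=f$, so $u$ solves \eqref{eq:M-A}. Uniqueness follows from Corollary \ref{cor:uniqueness-solutions}.
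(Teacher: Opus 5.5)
Your proof is correct and follows the paper's Perron argument. The supremum is a subsolution by monotonicity of $F$, a one-point bump at an interior vertex contradicts maximality wherever $u<F[u]$, and uniqueness comes from the comparison principle. The differences are cosmetic: the paper obtains $u=g$ on $\mathcal O$ by replacing all boundary values of $u$ with $g$ at once, which is the same idea as your one-vertex bump. At the bumped interior vertex, the paper uses monotonicity of $F$, whereas you use that $H_i[\cdot](x_0)$ does not depend on the value at $x_0$.
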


\begin{proof}
Since \(\mathcal S_{\mathcal M}\) is nonempty and pointwise bounded from above, the function
\[
    u(x) \coloneqq \sup_{v\in\mathcal S_{\mathcal M}}v(x)
\]
is finite for every \(x\in\mathcal X\). We first show that \(u\) is a subsolution. Let \(v\in\mathcal S_{\mathcal M}\). Since \(v\le u\), monotonicity gives
\[
    F[v](x)\le F[u](x),
    \qquad x\in\mathcal X\setminus\mathcal O.
\]
Then, using that \(v\) is a subsolution, we get
\[
    v(x)\le F[v](x)\le F[u](x),
    \qquad x\in\mathcal X\setminus\mathcal O.
\]
Taking the supremum over \(v\in\mathcal S_{\mathcal M}\), we obtain
\[
    u(x)\le F[u](x),
    \qquad x\in\mathcal X\setminus\mathcal O.
\]
Moreover, since every \(v\in\mathcal S_{\mathcal M}\) satisfies \(v\le g\)
on \(\mathcal O\), we have
\[
    u(x)\le g(x),
    \qquad x\in\mathcal O.
\]
Hence \(u\in\mathcal S_{\mathcal M}\).

We now show that \(u\) attains the boundary data. Define
\[
    u^g(x) \coloneqq 
    \begin{cases}
        u(x), & x\in\mathcal X\setminus\mathcal O,\\
        g(x), & x\in\mathcal O.
    \end{cases}
\]
Then \(u^g\ge u\) on \(\mathcal X\). By monotonicity of \(F\), for \(x\in\mathcal X\setminus\mathcal O\),
\[
    u^g(x)
    =
    u(x)
    \le
    F[u](x)
    \le
    F[u^g](x).
\]
Also \(u^g=g\) on \(\mathcal O\). Thus \(u^g\in\mathcal S_{\mathcal M}\). Since \(u\) is the supremum of all subsolutions,
\[
    u^g(x)\le u(x),
    \qquad x\in\mathcal X.
\]
But \(u\le u^g\), and therefore \(u=u^g\). Hence
\[
    u(x)=g(x),
    \qquad x\in\mathcal O.
\]
It remains to show that \(u\) satisfies the equation in
\(\mathcal X\setminus\mathcal O\). Suppose, seeking a contradiction, that
there exists \(\tilde x\in\mathcal X\setminus\mathcal O\) such that
\[
    0<\delta \eqqcolon F[u](\tilde x) - u(\tilde x).
\]
Define \(\tilde u \colon \mathcal X\to\mathbb R\) by
\[
    \tilde u(x) \coloneqq
    \begin{cases}
        u(x), & x\neq \tilde x,\\
        u(\tilde x)+\delta, & x=\tilde x.
    \end{cases}
\]
Then \(\tilde u\ge u\) on \(\mathcal X\). We claim that \(\tilde u\) is a
subsolution. First, at \(x=\tilde x\), monotonicity gives
\[
    F[\tilde u](\tilde x)
    \ge
    F[u](\tilde x)
    =
    u(\tilde x)+\delta
    =
    \tilde u(\tilde x).
\]
Second, if \(x\in\mathcal X\setminus\mathcal O\) and \(x\neq\tilde x\), then
\[
    F[\tilde u](x)
    \ge
    F[u](x)
    \ge
    u(x)
    =
    \tilde u(x).
\]
Finally, since \(\tilde x\notin\mathcal O\), we have \(\tilde u=u=g\) on \(\mathcal O\). Therefore \(\tilde u\in\mathcal S_{\mathcal M}\). By the definition of \(u\) as the supremum of all subsolutions,
\[
    u(\tilde x)
    \ge
    \tilde u(\tilde x)
    =
    u(\tilde x)+\delta,
\]
which is impossible. Hence
\[
    u(x)=F[u](x),
    \qquad x\in\mathcal X\setminus\mathcal O.
\]
Together with \(u=g\) on \(\mathcal O\), this proves that \(u\) solves the
Bellman Dirichlet problem.

Uniqueness follows from the comparison principle: if \(w\) is another solution, then \(u\) and \(w\) are both subsolutions and supersolutions, so applying the comparison principle twice gives \(u\le w\) and \(w\le u\). Therefore \(u=w\).
\end{proof}

In the previous result, we have proved that if $\mathcal{S}_{\mathcal{M}}$ is nonempty, and it is bounded for each $x \in \mathcal{X}$, then the largest subsolution is the actual solution to the problem. We next identify graph conditions ensuring these two assumptions. We begin with the following remark.

\begin{remark}
Let $w\colon \mathcal{X} \to \mathbb{R}$ and $f\colon \mathcal{X} \to \mathbb{R}$ be nonnegative. Recalling that if $\alpha \in \mathcal{A}_{n/2}$, we obtain
\[
    \sum_{i=1}^{n/2}\alpha_i \geq \frac{n}{2}.
\]
Therefore, using that $H_i[w] \geq H_1[w]$ and the nonnegativity of $f$, we have
\begin{align*}
    F[w](x) & \geq \inf_{\alpha\in\mathcal{A}_{n/2}}\left(H_1[w](x) - \frac{\frac{n}{2}f(x)^{2/n}}{\sum_{i=1}^{n/2}\alpha_i}\right)\\
        &\geq  H_1[w](x)-f(x)^{2/n}.
\end{align*}
On the other hand, using that $H_i[w] \leq H_{n/2}[w]$ and the nonnegativity of $f$, we have
\[
    F[w](x)  \leq H_{n/2}[w](x).
\]
In particular, if $w_1$ and $w_{n/2}$ satisfy
\[
w_1 (x) \leq H_{1}[w_1](x)-f(x)^{2/n} \quad \text{and} \quad w_{n/2} (x) \geq H_{n/2}[w_{n/2}] (x), \quad x\in \mathcal{X}\setminus\mathcal{O},
\]
with $$w_1 (x) \leq g (x) \leq w_{n/2} (x) \quad x \in  \mathcal{O},$$ then, $w_1 \in \mathcal{S}_{\mathcal{M}}$ and by Theorem \ref{thm:comparison-principle}, we have
\[
     w (x) \leq w_{n/2} (x) \quad \forall x \in \mathcal{X}, \quad \text{for all} \quad w \in \mathcal{S}_{\mathcal{M}}.
\]
\end{remark}

\subsection{Existence on 1-degenerate graphs}

In view of the preceding remark, the assumptions of Theorem \ref{thm:large-sub} are satisfied whenever we can ensure the existence of a subsolution and of a supersolution to the extremal problems with $H_1$ and $H_{n/2}$. Observe that if we take $w_{n/2} = \|g\|_{L^\infty(\mathcal{O})}$, then
\[
    w_{n/2}(x) = H_{n/2} [w_{n/2}](x) \quad \text{for} \quad x \in \mathcal{X} \setminus \mathcal{O},
\]
and $w_{n/2} \geq g$ on $\mathcal{O}$. In particular, we get
\[
    w \leq \|g\|_{L^\infty(\mathcal{O})} \quad \text{for all} \quad w \in \mathcal{S}_{\mathcal{M}}.
\]

Existence for the problem involving \(H_1\) is more delicate. Without additional assumptions, a solution may fail to exist, as the following example shows.

\begin{example}
Let $\mathcal{X} = \{1,2,3\}$, fully connected, $\mathcal{O} = \emptyset$ and $f \equiv 1$. By vacuity, the boundary condition is attained. If there were such $w_1$, we would have
\begin{align*}
    w_1(1) &\leq \frac{(w_1(2) + w_1(3))}{2} - 1\\
    w_1(2) &\leq \frac{(w_1(1) + w_1(3))}{2} - 1\\
    w_1(3) &\leq \frac{(w_1(1) + w_1(2))}{2} - 1.
\end{align*}
Then,
\[
    w_1(1) + w_1(2) + w_1(3) \leq w_1(1) + w_1(2) + w_1(3) - 3,
\]
which is a contradiction. In fact, more generally, a barrier cannot exist if there is a nonempty set $A \subset \mathcal{X} \setminus \mathcal{O}$ such that
\[
    \mathcal{N}_x \subset A, \quad \text{for every} \quad x \in A.
\]
\end{example}

The previous example shows that one cannot expect solutions to exist in full generality. To guarantee their existence, one must impose enough additional structure on the graph to ensure the existence of a solution to the problem
\[
\begin{cases}
    w_1 (x) \leq H_{1}[w_1](x)-f(x)^{2/n} &x \in \mathcal{X}\setminus\mathcal{O},\\
    w_1 (x) \leq g (x) & x\in  \mathcal{O}.
\end{cases}
\]

\medskip

We begin by noticing that it is sufficient to establish existence for the following simpler problem
\[
    b(x) \geq H_{n/2}[b](x)+1 \quad \text{for} \quad x \in \mathcal{X}\setminus \mathcal{O},
\]
and $b \geq 0$ in $\mathcal{O}$. Indeed, if such $b$ exists, then we take
\[
    w_1(x) \coloneqq -\lambda b(x) - M , \quad \text{with} \quad \lambda \coloneqq \sup_{x \in \mathcal{X} \setminus \mathcal{O}}f(x)^{\frac{2}{n}}, \quad M \coloneqq \|g\|_{L^\infty(\mathcal{O})},
\]
which satisfies $w_1 \leq g$ in $\mathcal{O}$ and
\[
    H_1[w_1](x) = -\lambda H_{n/2} [b](x) - M \geq -\lambda (b(x) - 1) - M \geq w_1(x) + f(x)^{2/n},
\]
for $x \in \mathcal{X} \setminus \mathcal{O}$.

It turns out that the existence of such barriers imposes a $1$-degeneracy structure on the graph, as in the following result.

\begin{theorem}\label{thm:plucking-condition}
Let \(\mathcal G\) be a finite simple undirected graph with vertex
set \(\mathcal X\), and let \(\mathcal O\subset\mathcal X\). Assume
that every \(x\in\mathcal X\setminus\mathcal O\) has at least two
neighbors. Consider the system
\begin{equation}\label{eq:system}
\begin{cases}
b(x)\ge \dfrac{b(y_1)+b(y_2)}{2}+1, & x\in \mathcal{X}\setminus \mathcal{O},\\[1ex]
b(x)\ge 0, & x\in \mathcal{O},
\end{cases}
\end{equation}
where $y_1,y_2\in\mathcal N_x$ are such that
$$b(y_1)\ge b(y_2)\ge b(y),\quad \forall y\in\mathcal{N}_x\setminus\{y_1,y_2\}.$$

Then there exists a function $b:\mathcal{X}\to\mathbb{R}$ satisfying
\eqref{eq:system} if and only if
\begin{equation}\label{eq:plucking-condition}
    \boxed{
    \forall\,\emptyset\neq S\subset \mathcal{X}\setminus \mathcal{O},
    \quad
    \exists x\in S \text{ such that }
    |\mathcal{N}_x\cap S|\le 1 .
    }
\end{equation}
\end{theorem}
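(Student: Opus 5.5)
Both directions of the equivalence rest on one observation: the right-hand side of \eqref{eq:system} involves the two \emph{largest} neighbor values of $b$.

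\emph{Necessity.} Suppose $b$ satisfies \eqref{eq:system} and some nonempty $S\subset\mathcal X\setminus\mathcal O$ violates \eqref{eq:plucking-condition}, so every $x\in S$ has at least two neighbors in $S$. Pick $\hat x\in S$ with $b(\hat x)=\max_S b$. The vertex $\hat x$ has two distinct neighbors $z_1,z_2\in S$. The average of the two largest neighbor values dominates the average over any pair of distinct neighbors, so
\[
b(\hat x)\ge \frac{b(y_1)+b(y_2)}{2}+1\ge \frac{b(z_1)+b(z_2)}{2}+1.
\]
Since $b(z_j)\le b(\hat x)$ for $j=1,2$, we get $b(\hat x)\ge \min\{b(z_1),b(z_2)\}+1$. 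This gives no contradiction yet. It is better to take $\hat x\in S$ minimizing $b$ over $S$. Then $b(z_1),b(z_2)\ge b(\hat x)$, and hence
\[
b(\hat x)\ge \frac{b(z_1)+b(z_2)}{2}+1\ge b(\hat x)+1,
\]
which is a contradiction. This mirrors the closed-set obstruction in the preceding example.

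\emph{Sufficiency.} Assume \eqref{eq:plucking-condition}. I would first iterate the condition to get a "plucking" order. Set $S_0=\mathcal X\setminus\mathcal O$, and while $S_{k}\neq\emptyset$ choose $x_{k+1}\in S_k$ with $|\mathcal N_{x_{k+1}}\cap S_k|\le1$, then put $S_{k+1}=S_k\setminus\{x_{k+1}\}$. This yields an enumeration $x_1,\dots,x_N$ of the interior in which each $x_k$ has at most one neighbor among the later vertices $x_{k+1},\dots,x_N$. All its other neighbors are earlier interior vertices or boundary vertices. I then construct $b$ with $b=0$ on $\mathcal O$ and large values on the later vertices. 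Because every neighbor relation is symmetric, "earlier" is the dangerous side, and the order must be chosen so that $b(x_k)$ dominates.

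Concretely, set $b(x_k)=C^{k}$ for a large constant $C$, or more carefully define $b$ recursively backward. The point is that $x_k$ has at most one neighbor with a larger value, namely its unique later neighbor, if any. Every other neighbor $y$ has $b(y)\le b(x_{k-1})$ or $b(y)=0$. Among the two largest neighbor values, at most one exceeds $b(x_k)$, and that one equals at most $b(x_N)$. So I would take values decreasing sharply toward the end of the order instead, making the unique later neighbor the small one. That is, set $b(x_k)=c_k$ with $c_1\gg c_2\gg\dots\gg c_N>0$.

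Then for $x_k$, the earlier neighbors are the large ones. This breaks the scheme again, because earlier vertices can be numerous. This tension is the main obstacle: orient the plucking correctly. Reversing the order fixes it. In the reversed order $x_N,\dots,x_1$, each vertex has at most one neighbor appearing earlier in that order. Define $b$ recursively along the reversed order, giving smaller values to the vertices that come earlier in it.

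Under this choice, the requirement at $x_k$ becomes the following. The two largest neighbor values are at most one "bad" value (the unique neighbor in $S_k$) together with values from $\mathcal O$, which are $0$, or from vertices with smaller assigned values. So it suffices to have
\[
b(x_k)\ge \tfrac12\bigl(b(\text{bad neighbor})+\max\text{ of the rest}\bigr)+1.
\]
The bad neighbor's value must therefore be below $2b(x_k)-(\text{rest})-2$. I would choose $b(x_k)=2^{N-k}\cdot K$-type geometric weights and check this inequality, treating separately the cases where the vertex has zero or one neighbor in $S_k$. The bookkeeping of which neighbors are large, ensuring that at most one neighbor exceeds the current vertex's value while the second largest is at most about $b(x_k)-2$, is where I expect the real work to lie.
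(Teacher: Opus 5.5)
Your necessity argument, once you switch from the maximizer to the minimizer of $b$ over $S$, is correct and is exactly the paper's. Your one-vertex-at-a-time plucking order $x_1,\dots,x_N$ is also a fine substitute for the paper's layers $A_k$.

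The gap is in sufficiency: you never produce a valid $b$, and the choice you end with fails for two reasons.

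\emph{Orientation.} The constraint at $x_k$ involves the two \emph{largest} neighbor values, and $x_k$ has at most one neighbor among $x_{k+1},\dots,x_N$. For that property to help, the values have to increase along the plucking order. This was your first instinct, and it is the paper's choice. Your final prescription gives smaller values to vertices that come earlier in the reversed order, i.e.\ $b(x_k)=2^{N-k}K$. That is decreasing in $k$, which is precisely the orientation you had just shown to break. Under it, your claim that the non-exceptional neighbors carry smaller values is false.

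\emph{Growth profile.} The one exceptional neighbor can carry the top value $c_N$, so you need $c_k\ge\tfrac12(c_N+c_{k-1})+1$ for all $k$. This forces the values to be concave in $k$ and clustered just below the maximum. Geometric weights are convex and fail in either direction. For instance, take an interior path with edges $p_1p_2$ and $p_2p_3$, where $p_1$ and $p_3$ are each also joined to a boundary vertex, plucked in the order $p_1,p_2,p_3$. For $b(p_k)=KC^k$ with $K,C>0$ (this covers both your $C^k$ and $2^{N-k}K$), the constraint at $p_2$ becomes $-\tfrac{KC}{2}(C-1)^2\ge 1$, which is impossible.

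The missing idea is to solve the worst-case recursion $c_k=1+\tfrac12(c_N+c_{k-1})$ with $c_0=0$ (the boundary value). This gives $c_k=2^{N+1}-2^{N+1-k}$: increasing, with the gap to the top halving at each step. At $x_k$, the two largest neighbor values are then at most $c_N$ (the possible later neighbor) and $c_{k-1}$ (earlier interior or boundary neighbors). Their average is at most $c_k-1$, as required. This is the paper's $B_k=2^{m+2}-2^{m+1-k}$, applied to single vertices instead of layers.
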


\begin{proof}

\noindent\textbf{Necessity.} Assume that a function $b$ satisfying \eqref{eq:system} exists. Let $\emptyset\neq S\subset \mathcal{X}\setminus\mathcal{O}$, and choose $x\in S$ such that
\[
    b(x)=\min_{z\in S} b(z).
\]
We claim that $|\mathcal{N}_x\cap S|\le 1$. Suppose, to the contrary, that $|\mathcal{N}_x\cap S|\ge 2$. Then there exist two distinct neighbors $z_1,z_2\in\mathcal{N}_x\cap S$. Since $x$ minimizes $b$ on $S$, we have
\[
    b(z_1)\ge b(x),
    \qquad
    b(z_2)\ge b(x).
\]
In particular, since $y_1$ and $y_2$ are chosen so that $b(y_1)$ and $b(y_2)$ are the two largest values of $b$ on $\mathcal{N}_x$, it follows that
\[
    b(y_1)\ge b(x),
    \qquad
    b(y_2)\ge b(x).
\]
Using \eqref{eq:system} at $x$, we obtain
\[
    b(x)
    \ge
    1+\frac{b(y_1)+b(y_2)}{2}
    \ge
    1+b(x),
\]
which is impossible. Hence $|\mathcal{N}_x\cap S|\le 1$. Since $S$ was arbitrary, \eqref{eq:plucking-condition} follows.

\noindent\textbf{Sufficiency.} Assume that \eqref{eq:plucking-condition} holds. Set
\[
     \mathcal{U} \coloneqq \mathcal{X}\setminus\mathcal{O}.
\]
If $\mathcal{U}=\emptyset$, then $b\equiv 0$ satisfies \eqref{eq:system} by vacuity. Hence, we may assume $\mathcal{U}\neq\emptyset$. Define inductively
\[
    S_0 \coloneqq \mathcal{U},
    \qquad
    A_k \coloneqq \{x\in S_k \colon \;|\mathcal{N}_x\cap S_k|\le 1\},
    \qquad
    S_{k+1} \coloneqq S_k\setminus A_k.
\]
By \eqref{eq:plucking-condition}, $A_k$ is nonempty whenever $S_k$ is nonempty. Since $\mathcal{X}$ is finite, this process ends after finitely many steps (see Figure \ref{fig:peeling-process} below).

\begin{figure}[ht]
\centering

\input{TikZ/PeelingFigure.tikz}

\caption{A peeling sequence for the sets
$S_0=\mathcal{U}$, $A_k=\{x\in S_k:\ |\mathcal N_x\cap S_k|\le 1\}$, and
$S_{k+1}=S_k\setminus A_k$. Orange vertices are removed at the current step,
blue vertices remain, gray vertices have already been removed, and red
vertices belong to $\mathcal O$.}
\label{fig:peeling-process}
\end{figure}

Thus, for some $m\ge 0$, we obtain disjoint layers
\[
    A_0,A_1,\dots,A_m
\]
such that
\[
    \mathcal{U}=A_0\cup A_1\cup\cdots\cup A_m.
\]
We now choose values in each layer large enough to dominate the possible contribution of one neighbor lying in a later layer. Set
\[
    B_{-1} \coloneqq 0,
    \qquad
    B_k \coloneqq 2^{m+2}-2^{m+1-k},
    \qquad 0\le k\le m .
\]
In particular, $B_0<B_1<\cdots<B_m$, and one checks that
\[
    B_k
    =
    1+\frac{B_m+B_{k-1}}{2},
    \qquad 0\le k\le m .
\]
Define $b \colon \mathcal{X}\to\mathbb{R}$ by
\[
    b(x)\coloneqq 
    \begin{cases}
        0, & x\in\mathcal{O},\\
        B_k, & x\in A_k.
    \end{cases}
\]
Clearly $b\ge 0$ on $\mathcal{O}$.

It remains to verify \eqref{eq:system} on $\mathcal{U}$. Fix $x\in A_k$. At the stage when $A_k$ is removed, we have
\[
    |\mathcal{N}_x\cap S_k|\le 1.
\]
Therefore, among the neighbors of $x$, at most one belongs to
\[
    S_k=A_k\cup A_{k+1}\cup\cdots\cup A_m .
\]
At this possible exceptional neighbor
the value of $b$ is at most $B_m$.

All the remaining neighbors belong to
\[
    \mathcal{O}\cup A_0\cup\cdots\cup A_{k-1},
\]
and hence the value of $b$ at those neighbors is at most $B_{k-1}$. 

Therefore, the two largest neighbor values satisfy
\[
    \frac{b(y_1)+b(y_2)}{2}
    \le
    \frac{B_m+B_{k-1}}{2}.
\]
Using the definition of $B_k$, we obtain
\[
    \frac{b(y_1)+b(y_2)}{2}
    \le
    B_k-1
    =
    b(x)-1.
\]
Thus
\[
    b(x)\ge \frac{b(y_1)+b(y_2)}{2}+1.
\]
Since $x\in \mathcal{U}$ was arbitrary, \eqref{eq:system} holds on $\mathcal{X}\setminus\mathcal{O}$. This proves sufficiency. 
\end{proof}

\begin{lemma}\label{lem:plucking-forest}
Condition \eqref{eq:plucking-condition} holds if and only if the
subgraph induced by $\mathcal X\setminus\mathcal O$ is a forest.
\end{lemma}

\begin{proof}
If \eqref{eq:plucking-condition} holds, the induced subgraph cannot
contain a cycle: the vertex set $S$ of a cycle would satisfy
$|\mathcal N_x\cap S|\ge2$ for every $x\in S$.

Conversely, suppose that the subgraph induced by $\mathcal X\setminus\mathcal O$ is a forest. For every nonempty
$S\subseteq\mathcal X\setminus\mathcal O$, the subgraph induced by
$S$ is a nonempty forest and therefore contains a vertex of degree at
most one. Hence there exists $x\in S$ such that
\[
    |\mathcal N_x\cap S|\le1,
\]
which is \eqref{eq:plucking-condition}.
\end{proof}

\begin{figure}[ht]
\centering
\input{TikZ/SimpleEx.tikz}
\caption{The full graph contains a cycle, so it is not a tree. However, the induced unlabeled graph on $\mathcal{U}=\{a,b\}$ is a single edge, hence a tree. Therefore, condition \eqref{eq:plucking-condition} can still hold.}
\label{fig:ex-cycle-with-condition}
\end{figure}

\section{Woven forests}

In light of Theorem \ref{thm:plucking-condition}, we introduce a family of graphs satisfying condition \eqref{eq:plucking-condition}. Throughout, all graphs are assumed to be simple and undirected, and $\operatorname{deg}_{\mathcal G}(x)$ denotes the degree of a vertex $x$ in a graph $\mathcal G$.

Let $\mathcal F$ be a graph on a vertex set $\mathcal U$, and let $\mathcal O$ be a set disjoint from $\mathcal U$. We augment $\mathcal F$ by adjoining vertices from $\mathcal O$ to compensate, as far as possible, for differences in the degrees of the vertices of $\mathcal U$.

\begin{definition}[Degree deficiency]
Let
\[
\Delta(\mathcal F)
\coloneqq
\max_{z\in\mathcal U}\operatorname{deg}_{\mathcal F}(z).
\]
The degree deficiency of $x\in\mathcal U$ in $\mathcal F$ is
\[
\operatorname{def}_{\mathcal F}(x)
\coloneqq
\Delta(\mathcal F)-\operatorname{deg}_{\mathcal F}(x).
\]
\end{definition}

\begin{definition}[Woven graph]
Let $\mathcal F$ be a graph on $\mathcal U$, and let $\mathcal O$ be disjoint from $\mathcal U$. A graph $\mathcal G$ on
\[
\mathcal X\coloneqq\mathcal U\cup\mathcal O
\]
is called an $\mathcal O$-woven graph over $\mathcal F$ if the subgraph of $\mathcal G$ induced by $\mathcal U$ is $\mathcal F$ and every $x\in\mathcal U$ has exactly
\[
\min\bigl\{
\operatorname{def}_{\mathcal F}(x),
|\mathcal O|
\bigr\}
\]
neighbors in $\mathcal O$. If $\mathcal F$ is a forest, we call $\mathcal G$ an $\mathcal O$-woven forest over $\mathcal F$.
\end{definition}

\begin{remark}
Every $\mathcal O$-woven forest $\mathcal{G}$ over $\mathcal F$ satisfies
\eqref{eq:plucking-condition}, because the subgraph induced by
$\mathcal U$ is the forest $\mathcal F$. Moreover, if
\[
    |\mathcal O|
    \ge
    \max_{x\in\mathcal U}\operatorname{def}_{\mathcal F}(x),
\]
then every \(x\in\mathcal U\) has degree \(\Delta(\mathcal F)\) in
\(\mathcal G\). Thus, provided that \(\Delta(\mathcal F)=n\) and the above inequality holds, $\mathcal{G}$ is an admissible graph for our Monge-Amp\`ere problem.
\end{remark}

Now, we state an algorithm for constructing woven graphs in $\mathbb{R}^d$ using nearest neighbors (cf. Figure \ref{fig:weave-ex}). Let $\mathcal{U}$ be a finite subset of $\mathbb{R}^d$, let \(\mathcal{F}=(\mathcal{U},E)\) be a graph, and let \(\mathcal{O}\subset \mathbb{R}^d\) be a finite set of auxiliary vertices with at least $\Delta(\mathcal F)$ elements. We wish to augment \(\mathcal{F}\) so that each vertex of \(\mathcal{U}\) attains degree \(\Delta(\mathcal F)\).

\begin{enumerate}
    \item For each \(x_i\in \mathcal{U}\), compute its degree deficiency
    \[
    \delta_i=\max\{0,\,\operatorname{def}_{\mathcal F}(x_i)\}.
    \]

    \item For each \(x_i\in \mathcal{U}\), determine its
    \[
    \max_{1\le i\le |\mathcal{U}|}\delta_i
    \]
    nearest neighbors in \(\mathcal{O}\).

    \item For each \(x_i\in \mathcal{U}\), add edges connecting \(x_i\) to its first
    \(\delta_i\) nearest neighbors in \(\mathcal{O}\).

    \item Let \(E_\mathcal{O}\) denote the collection of all added edges.

    \item Return the completed graph
    \[
    \mathcal{G}=(\mathcal{U}\cup\mathcal{O},\;E\cup E_\mathcal{O}).
    \]
\end{enumerate}

\begin{figure}[ht]
    \centering
    \quad\input{TikZ/Forest.tikz}
    
    \medskip
    
    \input{TikZ/WeavedForest.tikz}
    \caption{A construction of a woven forest.}
    \label{fig:weave-ex}
\end{figure}

\section{The homogeneous case}

The Bellman formulation \eqref{eq:closed-form-MA} is useful for the inhomogeneous Dirichlet problem, but the corresponding existence theory requires additional graph-theoretic assumptions. 
Now, we turn our attention to the homogeneous problem
\[
\begin{cases}
    \mathcal M [u](x)=0, & x\in \mathcal X\setminus \mathcal O,\\
    u(x)=g(x), & x\in \mathcal O.
\end{cases}
\]
For graph-convex functions, the equation
$\mathcal M [u](x)=0$ is equivalent to 
\[
    \lambda_1[u](x)=0.
\]
Using the identity
\[
    \lambda_1[u](x)=H_1[u](x)-u(x),
\]
the homogeneous Dirichlet problem becomes
\begin{equation}\label{eq:H-M-A}
    \begin{cases}
        u(x)=H_1[u](x),
        & x\in\mathcal X\setminus\mathcal O,\\
        u(x)=g(x),
        & x\in\mathcal O.
    \end{cases}
\end{equation}

We shall use the following mild reachability condition. For every nonemp\-ty set \(A\subset \mathcal X\setminus\mathcal O\), there exists \(x\in A\) such that
\begin{equation}\label{eq:H1-reachability}
    |\mathcal N_x\cap A|\le 1.
\end{equation}
Equivalently, no nonempty subset of the unlabeled graph is closed under the operation of retaining at least two neighbors of each of its vertices. This condition is the graph-theoretic mechanism that allows contact information for \(H_1\) to propagate to the boundary.

\subsection{Comparison principle}

We begin with a simple contact lemma adap\-ted to the operator \(H_1\).

\begin{lemma}\label{lem:H1-contact-propagation}
Let \(p,q\colon\mathcal X\to\mathbb R\) satisfy \(p\le q\) in \(\mathcal X\). If
\[
    H_1[p](x)=H_1[q](x)
\]
for some \(x\in\mathcal X\setminus\mathcal O\), then there exist two distinct neighbors \(y,z\in\mathcal N_x\) such that
\[
    p(y)=q(y)
    \qquad\text{and}\qquad
    p(z)=q(z).
\]
\end{lemma}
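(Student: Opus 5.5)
We will use the order-statistic form of \(H_1\) from Lemma \ref{lemma:eigen-expression}: \(H_1[p](x)\) is the average of the two smallest neighbor values of \(p\), and likewise for \(q\). Write \(a_1\le a_2\le\cdots\le a_n\) for the sorted values of \(p\) on \(\mathcal N_x\) and \(b_1\le\cdots\le b_n\) for those of \(q\). As in the proof of Proposition \ref{prop:propagation-to-boundary}, \(p\le q\) gives \(a_k\le b_k\) for every \(k\). The hypothesis says \(a_1+a_2=b_1+b_2\), so \(a_1=b_1\) and \(a_2=b_2\).

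The point needing care is that equality of order statistics does not by itself locate contact points. We therefore work with explicit vertices. Choose two distinct neighbors \(z_1,z_2\in\mathcal N_x\) realizing the two smallest values of \(q\), so that \(q(z_1)=b_1\), \(q(z_2)=b_2\), and \(q(y)\ge b_2\) for every \(y\ne z_1,z_2\). From \(p\le q\) we get \(p(z_1)\le b_1=a_1\) and \(p(z_2)\le b_2=a_2\).

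Since \(a_1\) is the minimum of \(p\) on \(\mathcal N_x\), we have \(p(z_1)\ge a_1\), so \(p(z_1)=a_1=q(z_1)\). For \(z_2\), note that at most one neighbor can have \(p\)-value strictly below \(a_2\). Suppose \(p(z_2)<a_2\). Then \(z_2\) is that unique neighbor, which forces \(p(z_2)=a_1\) and \(a_1<a_2\). But \(z_1\ne z_2\) also has \(p(z_1)=a_1<a_2\), so two neighbors lie strictly below \(a_2\), contradicting the definition of \(a_2\). Hence \(p(z_2)=a_2=b_2=q(z_2)\).

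Taking \(y=z_1\) and \(z=z_2\) gives the two distinct contact neighbors. The only delicate step is the tie-handling for \(z_2\). As a cleaner alternative, one can note that \(p(z_1)+p(z_2)\ge a_1+a_2\), since any two distinct neighbors have \(p\)-sum at least the sum of the two smallest values, while \(p(z_1)+p(z_2)\le q(z_1)+q(z_2)=b_1+b_2=a_1+a_2\). So the two sums are equal. Combined with the termwise inequalities \(p(z_j)\le q(z_j)\), this gives \(p(z_j)=q(z_j)\) for \(j=1,2\) with no case analysis.
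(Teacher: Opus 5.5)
Your proof is correct. The ``cleaner alternative'' in your last paragraph is essentially the paper's argument. The paper chooses distinct \(y,z\in\mathcal N_x\) with \(H_1[q](x)=\frac{q(y)+q(z)}{2}\) and chains \(H_1[p](x)\le\frac{p(y)+p(z)}{2}\le\frac{q(y)+q(z)}{2}=H_1[q](x)\). It then concludes from equality of the sums together with the termwise inequalities \(p\le q\).

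Your primary route is also valid but does more work than needed. It goes through the order-statistic comparison \(a_k\le b_k\) borrowed from Proposition \ref{prop:propagation-to-boundary} and a tie analysis for \(z_2\). The separate identities \(a_1=b_1\) and \(a_2=b_2\) are never required, since only \(a_1+a_2=b_1+b_2\) enters, and that is the hypothesis itself.
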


\begin{proof}
Choose distinct neighbors \(y,z\in\mathcal N_x\) such that
\[
    H_1[q](x)=\frac{q(y)+q(z)}{2}.
\]
Since \(p\le q\), we have
\[
    H_1[p](x)
    \le
    \frac{p(y)+p(z)}{2}
    \le
    \frac{q(y)+q(z)}{2}
    =
    H_1[q](x).
\]
The assumption \(H_1[p](x)=H_1[q](x)\) forces equality throughout. Hence
\[
    p(y)+p(z)=q(y)+q(z).
\]
Since \(p(y)\le q(y)\) and \(p(z)\le q(z)\), both inequalities must be equalities.
\end{proof}

\begin{theorem}\label{thm:homo-comparison-principle}
Assume \eqref{eq:H1-reachability}. Let \(u,v\colon\mathcal X\to\mathbb R\) satisfy
\[
    \begin{cases}
        u(x)\le H_1[u](x),
        & x\in\mathcal X\setminus\mathcal O,\\
        u(x)\le g(x),
        & x\in\mathcal O,
    \end{cases}
    \qquad
    \begin{cases}
        v(x)\ge H_1[v](x),
        & x\in\mathcal X\setminus\mathcal O,\\
        v(x)\ge g(x),
        & x\in\mathcal O.
    \end{cases}
\]
Then, \(u\le v\) in \(\mathcal X\).
\end{theorem}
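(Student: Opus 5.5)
We argue by contradiction, following the scheme of Theorem \ref{thm:comparison-principle} but using the reachability condition \eqref{eq:H1-reachability} in place of the strict inequality \eqref{eq:F-strictly-below-H1}, which is no longer available because there is no source term. Suppose
\[
    M\coloneqq\max_{x\in\mathcal X}\bigl(u(x)-v(x)\bigr)>0,
\]
and let \(A\coloneqq\{x\in\mathcal X: u(x)-v(x)=M\}\). Since \(u\le g\le v\) on \(\mathcal O\), we have \(A\subseteq\mathcal X\setminus\mathcal O\), and \(A\neq\emptyset\). Set \(w\coloneqq v+M\). Then \(u\le w\) in \(\mathcal X\), \(u=w\) on \(A\), and \(H_1[w]=H_1[v]+M\) because \(H_1\) commutes with constant shifts.

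Fix \(x\in A\). Using the subsolution inequality for \(u\), the monotonicity of \(H_1\) (since \(u\le w\)), and the supersolution inequality for \(v\), we get
\[
    u(x)\le H_1[u](x)\le H_1[w](x)=H_1[v](x)+M\le v(x)+M=u(x).
\]
Hence \(H_1[u](x)=H_1[w](x)\). Lemma \ref{lem:H1-contact-propagation}, applied with \(p=u\) and \(q=w\), then yields two distinct neighbors \(y,z\in\mathcal N_x\) with \(u=w\) at both, that is, \(y,z\in A\). So every \(x\in A\) satisfies \(|\mathcal N_x\cap A|\ge 2\).

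This contradicts \eqref{eq:H1-reachability} applied to the nonempty set \(A\subset\mathcal X\setminus\mathcal O\), which guarantees some \(x\in A\) with \(|\mathcal N_x\cap A|\le1\). Therefore \(M\le 0\), i.e. \(u\le v\) in \(\mathcal X\).

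The only delicate point is the contact step. Lemma \ref{lem:H1-contact-propagation} gives just two contact neighbors, not the whole neighborhood, which is why the conclusion is \(|\mathcal N_x\cap A|\ge2\) rather than \(\mathcal N_x\subseteq A\). This weaker closedness is exactly the property that \eqref{eq:H1-reachability} excludes. No maximum-of-\(u\) argument is needed here, unlike in Theorem \ref{thm:comparison-principle}.
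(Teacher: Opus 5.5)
Your proof is correct and matches the paper's argument step for step. The steps are the same: you take the contact set $A\subseteq\mathcal X\setminus\mathcal O$ where $u-v$ attains its positive maximum $M$, and you derive the chain $u(x)\le H_1[u](x)\le H_1[v+M](x)\le u(x)$. You then apply Lemma~\ref{lem:H1-contact-propagation} to get $|\mathcal N_x\cap A|\ge 2$ for every $x\in A$, which contradicts \eqref{eq:H1-reachability}.
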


\begin{proof}
Suppose, by contradiction, that
\[
    M\coloneqq \max_{x\in\mathcal X}(u(x)-v(x))>0.
\]
Let
\[
    A\coloneqq
    \{x\in\mathcal X \colon \; u(x)-v(x)=M\}.
\]
Since \(u\le g\le v\) on \(\mathcal O\), we have
\[
    A\subset \mathcal X\setminus\mathcal O.
\]
Fix \(x\in A\), and set \(w\coloneqq v+M\). Then \(u\le w\) in \(\mathcal X\) and \(u(x)=w(x)\). By the monotonicity and translation invariance of \(H_1\),
\[
    H_1[u](x)\le H_1[w](x)=H_1[v](x)+M.
\]
Using the subsolution inequality for \(u\) and the supersolution inequality for \(v\), we obtain
\[
    u(x)
    \le
    H_1[u](x)
    \le
    H_1[w](x)
    =
    H_1[v](x)+M
    \le
    v(x)+M
    =
    u(x).
\]
Therefore
\[
    H_1[u](x)=H_1[w](x).
\]
By Lemma \ref{lem:H1-contact-propagation}, there exist distinct neighbors \(y,z\in\mathcal N_x\) such that
\[
    u(y)=w(y)
    \qquad\text{and}\qquad
    u(z)=w(z).
\]
Equivalently,
\[
    u(y)-v(y)=M,
    \qquad
    u(z)-v(z)=M.
\]
Hence \(y,z\in A\), and so
\[
    |\mathcal N_x\cap A|\ge 2
    \qquad\text{for every }x\in A.
\]
This contradicts \eqref{eq:H1-reachability}, applied to the nonempty set \(A\). Thus \(M\le0\), and therefore \(u\le v\) in \(\mathcal X\).
\end{proof}

\begin{corollary}\label{cor:homo-uniqueness}
Assume \eqref{eq:H1-reachability}. Then a solution
\(u\colon\mathcal X\to\mathbb R\) to \eqref{eq:H-M-A}, whenever it exists, is unique.
\end{corollary}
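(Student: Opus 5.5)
The plan is to derive uniqueness directly from Theorem \ref{thm:homo-comparison-principle}, in the same way that Corollary \ref{cor:uniqueness-solutions} follows from Theorem \ref{thm:comparison-principle}. Let \(u\) and \(v\) be two solutions of \eqref{eq:H-M-A}. We have \(u=H_1[u]\) and \(v=H_1[v]\) on \(\mathcal X\setminus\mathcal O\), and \(u=v=g\) on \(\mathcal O\). Because these are equalities, each function satisfies both inequality systems of Theorem \ref{thm:homo-comparison-principle}.

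The argument then has two steps. First, regard \(u\) as the subsolution and \(v\) as the supersolution. The equalities give \(u\le H_1[u]\) in \(\mathcal X\setminus\mathcal O\) and \(u\le g\) on \(\mathcal O\), and likewise \(v\ge H_1[v]\) in \(\mathcal X\setminus\mathcal O\) and \(v\ge g\) on \(\mathcal O\). Since \eqref{eq:H1-reachability} is assumed, the theorem yields \(u\le v\) in \(\mathcal X\). Second, exchange the roles of \(u\) and \(v\) and apply the theorem again to get \(v\le u\). The two inequalities together give \(u=v\).

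No real obstacle remains, because all the substantive work is already contained in the comparison principle. In particular, that is where the reachability condition \eqref{eq:H1-reachability} is used to exclude a contact set \(A\) with \(|\mathcal N_x\cap A|\ge2\) for every \(x\in A\). The only point to check is that the corollary is stated under the same hypothesis \eqref{eq:H1-reachability}, so that Theorem \ref{thm:homo-comparison-principle} applies to any pair of solutions. No graph-convexity assumption is needed, since \eqref{eq:H-M-A} is formulated directly in terms of \(H_1\).
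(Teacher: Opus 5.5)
Your proposal is correct and is essentially the paper's proof. Each solution serves as both a subsolution and a supersolution, so Theorem \ref{thm:homo-comparison-principle} (under \eqref{eq:H1-reachability}) gives \(u\le v\), and exchanging the roles gives \(v\le u\).
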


\begin{proof}
If \(u\) and \(v\) are two solutions of \eqref{eq:H-M-A}, then \(u\) is a subsolution and \(v\) is a supersolution. The comparison principle gives \(u\le v\). Reversing the roles of \(u\) and \(v\), we obtain \(v\le u\). Hence \(u=v\) in \(\mathcal X\).
\end{proof}

\subsection{Existence of solutions}

We now prove existence by Perron's method. Define the class of subsolutions
\[
\mathcal S_H
\coloneqq
\left\{
v\colon\mathcal X\to\mathbb R
\;\middle|\;
\begin{array}{ll}
v(x)\le H_1[v](x),
& x\in\mathcal X\setminus\mathcal O,\\[0.8ex]
v(x)=g(x),
& x\in\mathcal O.
\end{array}
\right\}.
\]
The equality condition on \(\mathcal O\) is important: it ensures that the Perron envelope attains the prescribed boundary data.

Assume \(\mathcal O\ne\emptyset\). The class \(\mathcal S_H\) is nonempty. Indeed, choose a constant
\[
    c\le \min_{\mathcal O} g
\]
and define
\[
    \underline u(x)
    =
    \begin{cases}
        c, & x\in\mathcal X\setminus\mathcal O,\\
        g(x), & x\in\mathcal O.
    \end{cases}
\]
Then every neighbor value of an interior vertex is at least \(c\), and hence
\[
    H_1[\underline u](x)\ge c=\underline u(x)
    \qquad\text{for }x\in\mathcal X\setminus\mathcal O.
\]
Thus \(\underline u\in\mathcal S_H\). The class is also bounded from above. If
\[
    C\coloneqq \max_{\mathcal O} g,
\]
then the constant function \(\overline u\equiv C\) satisfies
\[
    \overline u(x) \geq H_1[\overline u](x)
    \qquad x \in\mathcal X\setminus\mathcal O,
\]
and
\[
    \overline u (x)\ge g (x)
    \qquad x\in\mathcal O.
\]
By Theorem \ref{thm:homo-comparison-principle}, every \(v\in\mathcal S_H\) satisfies \(v\le \overline u\) in \(\mathcal X\).

\begin{theorem}\label{thm:homo-perron}
Assume \eqref{eq:H1-reachability} and \(\mathcal O\ne\emptyset\). Then the function \(u\colon\mathcal X\to\mathbb R\) defined by
\[
    u(x)\coloneqq \sup_{v\in\mathcal S_H} v(x)
\]
is the unique solution to \eqref{eq:H-M-A}.
\end{theorem}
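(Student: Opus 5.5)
We follow the Perron argument used for Theorem~\ref{thm:large-sub}, with $H_1$ in place of $F$. The properties of $H_1$ we need are monotonicity ($p\le q$ implies $H_1[p]\le H_1[q]$) and the fact that $H_1[p](x)$ depends only on the values of $p$ on $\mathcal N_x$, and in particular not on $p(x)$.

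\textbf{Step 1: $u$ is finite.} The class $\mathcal S_H$ contains $\underline u$, and every $v\in\mathcal S_H$ satisfies $v\le \overline u\equiv \max_{\mathcal O} g$ by Theorem~\ref{thm:homo-comparison-principle}. Hence
\[
\underline u\le u\le \overline u \quad\text{in } \mathcal X,
\]
so $u$ takes finite values everywhere. Moreover $u=g$ on $\mathcal O$, since every element of $\mathcal S_H$ equals $g$ there. This is why the equality constraint in $\mathcal S_H$ makes the boundary step trivial, in contrast with the $u^g$ construction in Theorem~\ref{thm:large-sub}.

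\textbf{Step 2: $u\in\mathcal S_H$.} Take $v\in\mathcal S_H$ and $x\in\mathcal X\setminus\mathcal O$. Since $v\le u$, monotonicity gives
\[
v(x)\le H_1[v](x)\le H_1[u](x).
\]
Taking the supremum over $v\in\mathcal S_H$ yields $u(x)\le H_1[u](x)$. Together with $u=g$ on $\mathcal O$, this shows $u\in\mathcal S_H$.

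\textbf{Step 3: $u$ solves the equation.} Suppose, for contradiction, that
\[
\delta\coloneqq H_1[u](\tilde x)-u(\tilde x)>0 \quad\text{for some } \tilde x\in\mathcal X\setminus\mathcal O.
\]
Define $\tilde u$ to equal $u$ everywhere except at $\tilde x$, where we set $\tilde u(\tilde x)=u(\tilde x)+\delta=H_1[u](\tilde x)$.

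We check that $\tilde u\in\mathcal S_H$.
\begin{itemize}
\item At $\tilde x$: because the graph is simple, $\tilde x\notin\mathcal N_{\tilde x}$, so $H_1[\tilde u](\tilde x)=H_1[u](\tilde x)=\tilde u(\tilde x)$. Monotonicity alone would also give the inequality $\ge$.
\item At other interior vertices $x$: since $\tilde u\ge u$, monotonicity gives $H_1[\tilde u](x)\ge H_1[u](x)\ge u(x)=\tilde u(x)$.
\item On $\mathcal O$: $\tilde u=u=g$, because $\tilde x\notin\mathcal O$.
\end{itemize}
Thus $\tilde u\in\mathcal S_H$, and therefore $\tilde u(\tilde x)\le u(\tilde x)$, which contradicts $\delta>0$. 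Hence $u=H_1[u]$ on $\mathcal X\setminus\mathcal O$, and $u$ solves \eqref{eq:H-M-A}.

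\textbf{Step 4: uniqueness.} This follows directly from Corollary~\ref{cor:homo-uniqueness}, which uses assumption \eqref{eq:H1-reachability}.

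The argument contains no real obstacle. The only substantive ingredient is the upper bound on $\mathcal S_H$ in Step 1, which needs the comparison principle and hence \eqref{eq:H1-reachability}; this was already established just before the statement. The one point requiring care is the monotonicity of $H_1$ in Steps 2 and 3. It follows from the order-statistic representation in Lemma~\ref{lemma:eigen-expression}: the $k$-th order statistic is monotone under pointwise increase, exactly as shown in the proof of Proposition~\ref{prop:propagation-to-boundary}.
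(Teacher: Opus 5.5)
Your proposal is correct and matches the paper's proof step for step. Both arguments run as follows:
\begin{itemize}
\item finiteness of the envelope comes from $\underline u\in\mathcal S_H$ together with the comparison bound $v\le\max_{\mathcal O}g$;
\item the boundary values are inherited directly from the equality constraint in $\mathcal S_H$;
\item the envelope is a subsolution by monotonicity of $H_1$;
\item a one-point bump at $\tilde x$ gives the contradiction, because $H_1[\cdot](\tilde x)$ does not depend on the value at $\tilde x$;
\item uniqueness follows from Corollary~\ref{cor:homo-uniqueness}.
\end{itemize}
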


\begin{proof}
Since \(\mathcal S_H\) is nonempty and bounded from above, \(u\) is well-defined. Moreover, because every \(v\in\mathcal S_H\) satisfies \(v=g\) on \(\mathcal O\), we immediately have
\[
    u=g
    \qquad\text{on }\mathcal O.
\]
We first show that \(u\) is a subsolution. Fix \(v\in\mathcal S_H\). Since \(v\le u\), the monotonicity of \(H_1\) gives
\[
    H_1[v](x)\le H_1[u](x)
    \qquad\text{for every }x\in\mathcal X\setminus\mathcal O.
\]
Therefore
\[
    v(x)\le H_1[v](x)\le H_1[u](x)
    \qquad\text{for }x\in\mathcal X\setminus\mathcal O.
\]
Taking the supremum over \(v\in\mathcal S_H\), we obtain
\[
    u(x)\le H_1[u](x)
    \qquad\text{for }x\in\mathcal X\setminus\mathcal O.
\]
Thus \(u\in\mathcal S_H\). It remains to prove the reverse inequality. Suppose, by contradiction, that there exists \(\tilde x\in\mathcal X\setminus\mathcal O\) such that
\[
    \delta
    \coloneqq
    H_1[u](\tilde x)-u(\tilde x)
    >0.
\]
Define \(\tilde u\colon\mathcal X\to\mathbb R\) by
\[
    \tilde u(x)
    =
    \begin{cases}
        u(x), & x\ne \tilde x,\\
        u(\tilde x)+\delta, & x=\tilde x.
    \end{cases}
\]
We claim that \(\tilde u\in\mathcal S_H\). Since \(\tilde x\notin\mathcal O\), the boundary values are unchanged:
\[
    \tilde u=g
    \qquad\text{on }\mathcal O.
\]

At \(x=\tilde x\), the operator \(H_1\) depends only on the values of the function on \(\mathcal N_{\tilde x}\), not on its value at \(\tilde x\). Hence
\[
    H_1[\tilde u](\tilde x)
    =
    H_1[u](\tilde x)
    =
    u(\tilde x)+\delta
    =
    \tilde u(\tilde x).
\]
If \(x\in\mathcal X\setminus\mathcal O\) and \(x\ne\tilde x\), then \(\tilde u(x)=u(x)\) and \(\tilde u\ge u\) in \(\mathcal X\). By monotonicity,
\[
    H_1[\tilde u](x)\ge H_1[u](x)\ge u(x)=\tilde u(x).
\]
Therefore \(\tilde u\in\mathcal S_H\). But this contradicts the definition of \(u\) as the pointwise supremum of all functions in \(\mathcal S_H\), since
\[
    \tilde u(\tilde x)
    =
    u(\tilde x)+\delta
    >
    u(\tilde x).
\]
Thus no such \(\tilde x\) exists, and
\[
    u(x)=H_1[u](x)
    \qquad\text{for every }x\in\mathcal X\setminus\mathcal O.
\]
Together with \(u=g\) on \(\mathcal O\), this proves that \(u\) solves \eqref{eq:H-M-A}. Uniqueness follows from Corollary \ref{cor:homo-uniqueness}.
\end{proof}

\section{Numerical schemes}\label{sct:numerical-schemes}

In this section, we give fixed-point schemes for the graph Monge--Amp\`ere equations. The schemes are motivated by the Bellman formulation and are fixed-point iterations for the corresponding Bellman maps. We write
\[
    \mathcal{U}\coloneqq \mathcal X\setminus\mathcal O.
\]
Throughout this section, we assume that every unlabeled vertex has exactly \(2d\) neighbors,
\[
    |\mathcal N_x|=2d,
    \qquad x\in \mathcal{U},
\]
and that the graph satisfies the reachability condition
\begin{equation}\label{eq:num-plucking-assumption}
    \forall\,\emptyset\ne S\subset \mathcal{U},
    \qquad
    \exists x\in S
    \quad\text{such that}\quad
    |\mathcal N_x\cap S|\le 1 .
\end{equation}
Equivalently, the subgraph induced by \(\mathcal{U}\) is a forest. 

For \(x\in \mathcal{U}\), order the neighbor values as
\[
    u(y_1)\le u(y_2)\le\cdots\le u(y_{2d}),
    \qquad y_i\in\mathcal N_x.
\]
We have
\[
    H_i[u](x) = \frac{u(y_{2i-1})+u(y_{2i})}{2},
    \qquad i=1,\dots,d,
\]
and, by Lemma \ref{lemma:eigen-expression}, we get
\[
    \lambda_i[u](x)=H_i[u](x)-u(x).
\]

\subsection{The fixed-point map}

Let \(f\colon \mathcal{U}\to(0,\infty)\).  For fixed values
\[
    h_1\le h_2\le\cdots\le h_d
\]
and \(F>0\), define \(T_F(h_1,\dots,h_d)\) to be the unique number \(t<h_1\) satisfying
\begin{equation}\label{eq:num-root-scalar}
    \prod_{i=1}^d (h_i-t)=F;
\end{equation}
recall the proof of Lemma \ref{lemma:restrict-set-alpha-2} for the existence of this number. The corresponding graph map is
\[
    T_f[u](x)
    \coloneqq
    T_{f(x)}\bigl(H_1[u](x),\dots,H_d[u](x)\bigr),
    \qquad x\in \mathcal{U},
\]
and we set 
$$T_f[u] (x)=g (x) , \qquad x\in \mathcal O .$$

\begin{lemma}\label{lem:num-root}
For every \(h_1\le \cdots\le h_d\) and \(F>0\), equation \eqref{eq:num-root-scalar} has a unique solution \(t<h_1\). Moreover, for \(d=2\),
\begin{equation}\label{eq:num-d2-root}
    T_F(h_1,h_2)
    =
    \frac{
        h_1+h_2
        -
        \sqrt{(h_2-h_1)^2+4F}
    }{2}.
\end{equation}
\end{lemma}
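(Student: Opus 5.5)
The plan is to study the scalar function
\[
    \phi(t)\coloneqq\prod_{i=1}^d(h_i-t)
\]
on the half-line $(-\infty,h_1)$ and show that it is a decreasing bijection onto $(0,\infty)$. This is the same argument as in the proof of Lemma \ref{lemma:restrict-set-alpha-2}, now made explicit.

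For $t<h_1\le h_i$, every factor $h_i-t$ is strictly positive. Each factor is also strictly decreasing in $t$. A product of positive, strictly decreasing functions is strictly decreasing, so $\phi$ is strictly decreasing on $(-\infty,h_1)$. One can also see this from the logarithmic derivative,
\[
    \frac{\phi'(t)}{\phi(t)}=-\sum_{i=1}^d\frac{1}{h_i-t}<0 .
\]
This gives uniqueness of a root in $(-\infty,h_1)$.

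For existence, note that $\phi$ is a polynomial and hence continuous. As $t\to-\infty$, each factor tends to $+\infty$, so $\phi(t)\to+\infty$. As $t\uparrow h_1$, the first factor tends to $0$ while the others stay bounded by $h_i-h_1+|h_1|+1$ near $h_1$, so $\phi(t)\to0^+$. Since $F>0$, the intermediate value theorem gives some $t<h_1$ with $\phi(t)=F$. This settles the first claim.

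For $d=2$, I will solve $(h_1-t)(h_2-t)=F$ directly, which is the quadratic
\[
    t^2-(h_1+h_2)t+h_1h_2-F=0 .
\]
Its discriminant is $(h_1+h_2)^2-4h_1h_2+4F=(h_2-h_1)^2+4F>0$, so the roots are
\[
    t_\pm=\frac{h_1+h_2\pm\sqrt{(h_2-h_1)^2+4F}}{2}.
\]
It remains to select the correct root, which is the only step needing care.

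Since $\sqrt{(h_2-h_1)^2+4F}>h_2-h_1\ge0$, we get $t_-<\tfrac{h_1+h_2-(h_2-h_1)}{2}=h_1$. In the same way $t_+>h_2\ge h_1$. So $t_-$ is the unique root below $h_1$, which is \eqref{eq:num-d2-root}. I expect no real obstacle here; the only subtlety is the limit at $h_1$ when several $h_i$ coincide with $h_1$, and this does no harm because those factors also tend to $0^+$.
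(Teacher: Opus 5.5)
Your proposal is correct and follows essentially the same route as the paper: strict monotonicity of $\prod_{i}(h_i-t)$ on $(-\infty,h_1)$, together with its limits at $-\infty$ and at $h_1$, gives a unique root; for $d=2$ one solves the quadratic and keeps the root below $h_1$. Your explicit check that $t_-<h_1$ and $t_+>h_2$ just spells out the root selection that the paper leaves implicit.
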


\begin{proof}
Consider
\[
    P(t)\coloneqq \prod_{i=1}^d(h_i-t)-F,
    \qquad t<h_1.
\]
On \((-\infty,h_1)\) each factor \(h_i-t\) is positive and
\[
    P'(t)
    =
    -\sum_{j=1}^d \prod_{\substack{i=1\\ i\ne j}}^d(h_i-t)
    <0.
\]
Thus \(P\) is strictly decreasing.  Moreover,
\[
    \lim_{t\to-\infty}P(t)=+\infty,
    \qquad
    P(h_1)=-F<0.
\]
Hence \(P\) has exactly one zero in \((-\infty,h_1)\). Formula \eqref{eq:num-d2-root} follows by solving the quadratic equation and selecting the root below \(h_1\).
\end{proof}

For \(d>2\), the value \(T_f[u](x)\) is computed robustly by bisection on the interval \((-\infty,H_1[u](x))\). More explicitly, set
\[
    P_x(t)=\prod_{i=1}^d\bigl(H_i[u](x)-t\bigr)-f(x).
\]
Start with \(b=H_1[u](x)\) and \(a=b-\eta\), where \(\eta>0\). Double \(\eta\) until \(P_x(a)>0\).  Since \(P_x(b)<0\), bisection on \([a,b]\) converges to the desired root. This avoids computing all polynomial roots and therefore avoids selecting a wrong branch.

The inhomogeneous numerical scheme is
\begin{equation}\label{eq:num-inhom-scheme}
    \begin{cases}
    u^{m+1}(x)
    =
    (1-\omega)u^m(x)+\omega T_f[u^m](x),
    &x\in \mathcal{U},\\[0.7ex]
    u^{m+1}(x)=g(x),
    &x\in\mathcal O,
    \end{cases}
\end{equation}
where
\[
    0<\omega\le 1.
\]
For \(d=2\), this becomes the explicit update
\begin{equation}\label{eq:num-inhom-d2}
\begin{aligned}
    u^{m+1}(x)
    &=
    (1-\omega)u^m(x)
    +
    \frac{\omega}{2}
    \Bigl(
        H_1[u^m](x)+H_2[u^m](x) \\
    &\qquad
        -
        \sqrt{
            \bigl(H_2[u^m](x)-H_1[u^m](x)\bigr)^2
            +4f(x)
        }
    \Bigr).
\end{aligned}
\end{equation}

\begin{remark}
If \(u=T_f[u]\) in \(\mathcal{U}\), then \(u(x)<H_1[u](x)\) and
\[
    \prod_{i=1}^d\bigl(H_i[u](x)-u(x)\bigr)=f(x),
    \qquad x\in \mathcal{U}.
\]
Thus, every fixed point of \(T_f\) is a strictly graph-convex solution of the inhomogeneous graph Monge--Amp\`ere equation.  Conversely, every strictly graph-convex solution of the equation is a fixed point of \(T_f\).
\end{remark}

For the homogeneous equation, graph convexity gives
\[
    \mathcal M [u](x)=0
    \quad\Longleftrightarrow\quad
    H_1[u](x)-u(x)=0.
\]
Define
\[
    T_0[u](x)\coloneqq H_1[u](x),
    \qquad x\in \mathcal{U},
\]
and set \(T_0[u]=g\) on \(\mathcal O\). The homogeneous scheme is
\begin{equation}\label{eq:num-hom-scheme}
    \begin{cases}
    u^{m+1}(x)
    =
    (1-\omega)u^m(x)+\omega H_1[u^m](x),
    &x\in \mathcal{U},\\[0.7ex]
    u^{m+1}(x)=g(x),
    &x\in\mathcal O,
    \end{cases}
\end{equation}
again with \(0<\omega\le 1\).

\subsection{Convergence on woven forests}

The convergence proof uses the barrier supplied by Theorem \ref{thm:plucking-condition}.  Under \eqref{eq:num-plucking-assumption}, the sufficiency construction in Theorem \ref{thm:plucking-condition} gives a function \(b\colon\mathcal X\to[0,\infty)\) such that
\[
    b=0
    \quad\text{on }\mathcal O,
    \qquad
    b(x)>0
    \quad\text{for }x\in \mathcal{U},
\]
and
\begin{equation}\label{eq:num-barrier}
    \max_{\substack{y,z\in\mathcal N_x\\ y\ne z}}
    \frac{b(y)+b(z)}2
    \le b(x)-1,
    \qquad x\in \mathcal{U}.
\end{equation}
Since $b\ge0$ on $\mathcal X$ and every $x\in \mathcal{U}$ has at least two
neighbors, the left-hand side of \eqref{eq:num-barrier} is
nonnegative. Hence $b(x)\ge1$ for every $x\in \mathcal{U}$. We let
\[
    B\coloneqq \max_{x\in \mathcal{U}} b(x),
\]
and, for functions \(w\) satisfying \(w=0\) on \(\mathcal O\), define
\[
    \|w\|_b
    \coloneqq
    \max_{x\in \mathcal{U}}\frac{|w(x)|}{b(x)}.
\]
This is well-defined, since \(\mathcal{U}\) is finite. Moreover, this is a norm on the space of functions vanishing on \(\mathcal O\).

\begin{lemma}\label{lem:num-H-contraction}
Let \(u,v\colon\mathcal X\to\mathbb R\) satisfy \(u=v=g\) on \(\mathcal O\).
If
\[
    \mu\coloneqq \|u-v\|_b,
\]
then, for every \(x\in \mathcal{U}\) and \(i=1,\dots,d\),
\begin{equation}\label{eq:num-H-estimate}
    |H_i[u](x)-H_i[v](x)|
    \le
    \mu\bigl(b(x)-1\bigr).
\end{equation}
\end{lemma}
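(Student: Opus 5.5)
Set $w\coloneqq u-v$. Since $u=v=g$ on $\mathcal O$, $w$ vanishes on $\mathcal O$ and $\|w\|_b=\mu$ makes sense. The plan is to dominate $u$ above and below by $v$ shifted by a multiple of the barrier $b$, and then use the monotonicity of $H_i$ together with the barrier inequality \eqref{eq:num-barrier}.

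\textbf{Step 1: pointwise comparison.} By definition of $\mu$, for every $y\in\mathcal U$ we have $|u(y)-v(y)|\le \mu b(y)$. On $\mathcal O$ this also holds, since $u=v$ there and $b=0$. Hence
\[
    v-\mu b\le u\le v+\mu b \qquad\text{in }\mathcal X .
\]
Because each $H_i$ is monotone, this gives
\[
    H_i[v-\mu b](x)\le H_i[u](x)\le H_i[v+\mu b](x).
\]

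\textbf{Step 2: subadditivity of the order statistics.} I would show that for any $p,q$ and $x\in\mathcal U$,
\[
    H_i[p+q](x)\le H_i[p](x)+\max_{\substack{y,z\in\mathcal N_x\\ y\neq z}}\frac{q(y)+q(z)}{2}.
\]
To see this, take a set $S$ with $|S|=2i$ that realizes the min-max for $p$. For that $S$,
\[
    \max_{y\ne z\in S}\frac{(p+q)(y)+(p+q)(z)}{2}\le \max_{y\ne z\in S}\frac{p(y)+p(z)}{2}+\max_{y\ne z\in S}\frac{q(y)+q(z)}{2},
\]
and the last maximum is at most the maximum over all distinct pairs in $\mathcal N_x$. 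Applying this with $p=v$, $q=\mu b$, and invoking \eqref{eq:num-barrier}, yields
\[
    H_i[u](x)\le H_i[v](x)+\mu(b(x)-1).
\]

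\textbf{Step 3: the reverse inequality.} Exchange the roles of $u$ and $v$. The hypotheses are symmetric and $\|v-u\|_b=\mu$, so Step 2 gives
\[
    H_i[v](x)\le H_i[u](x)+\mu(b(x)-1).
\]
Combining the two inequalities proves \eqref{eq:num-H-estimate}.

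The main point needing care is Step 2. The minimizing set $S$ is chosen for the first function only, so it is not a minimizer for the sum. The bound still goes through because only an upper estimate is needed, and the $q$-contribution is controlled by the global pair maximum over $\mathcal N_x$, which is exactly the quantity the barrier controls. The symmetric swap in Step 3 avoids any need for a lower subadditivity bound.
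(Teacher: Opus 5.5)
Your proof is correct and follows essentially the same route as the paper. The paper also uses $u\le v+\mu b$, fixes a minimizing set $S_i$ for $H_i[v](x)$, splits the pair maximum over $S_i$, bounds the $b$-part by \eqref{eq:num-barrier}, and then swaps $u$ and $v$. Your split into monotonicity plus a separate subadditivity estimate is only a repackaging of that same chain of inequalities.
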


\begin{proof}
Since \(\mu=\|u-v\|_b\), we have
\[
    u\le v+\mu b
    \quad\text{and}\quad
    v\le u+\mu b
    \qquad\text{in }\mathcal X.
\]
Fix \(x\in \mathcal{U}\) and \(i\in\{1,\dots,d\}\). Let \(S_i\subset\mathcal N_x\), \(|S_i|=2i\), be a minimizing set for \(H_i[v](x)\). Then
\begin{align*}
    H_i[u](x)
    &\le
    \max_{\substack{y,z\in S_i\\ y\ne z}}
    \frac{u(y)+u(z)}2                                    \\
    &\le
    \max_{\substack{y,z\in S_i\\ y\ne z}}
    \left[
        \frac{v(y)+v(z)}2
        +
        \mu\frac{b(y)+b(z)}2
    \right]                                               \\
    &\le
    H_i[v](x)+\mu\bigl(b(x)-1\bigr),
\end{align*}
where we used \eqref{eq:num-barrier} in the last step. Reversing the roles of \(u\) and \(v\) gives the reverse inequality, and hence \eqref{eq:num-H-estimate} follows.
\end{proof}

\begin{lemma}\label{lem:num-T-monotone}
For fixed \(F>0\), the map
\[
    (h_1,\dots,h_d)\mapsto T_F(h_1,\dots,h_d)
\]
is monotone nondecreasing in each variable and has the following property:
\[
    T_F(h_1+c,\dots,h_d+c)=T_F(h_1,\dots,h_d)+c.
\]

The same properties hold for \(T_0(h_1,\dots,h_d)=h_1\).
\end{lemma}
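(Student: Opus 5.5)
The plan is to read off both properties from the implicit definition of \(T_F\) through the strictly decreasing function
\[
    P(t)=\prod_{i=1}^d(h_i-t)-F,\qquad t<h_1,
\]
from the proof of Lemma \ref{lem:num-root}. Throughout we allow general \((h_1,\dots,h_d)\): the root condition only needs \(t<\min_i h_i\), so we do not insist on the ordering \(h_1\le\cdots\le h_d\) being preserved when one coordinate moves.

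\emph{Translation.} Let \(t=T_F(h_1,\dots,h_d)\). The number \(t+c\) satisfies \(t+c<\min_i h_i+c\), and
\[
    \prod_{i=1}^d\bigl((h_i+c)-(t+c)\bigr)=\prod_{i=1}^d(h_i-t)=F.
\]
By the uniqueness part of Lemma \ref{lem:num-root}, \(t+c\) is the root for the shifted data, so \(T_F(h_1+c,\dots,h_d+c)=T_F(h_1,\dots,h_d)+c\).

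\emph{Monotonicity.} Suppose \(h_j\le h_j'\) for one index \(j\), with all other coordinates equal. Let \(t=T_F(h)\) and let \(P'\) denote the polynomial built from \(h'\). We have \(t<\min_i h_i\le\min_i h_i'\), so \(t\) lies in the domain of \(P'\). Every factor \(h_i'-t\) is positive and \(h_j'-t\ge h_j-t\), hence \(P'(t)\ge P(t)=0\). Since \(P'\) is strictly decreasing on \((-\infty,\min_i h_i')\) and vanishes at \(T_F(h')\), this forces \(T_F(h')\ge t\).

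\emph{Alternative via the Bellman form.} The same conclusions also follow from the representation established in the proof of Lemma \ref{lemma:restrict-set-alpha-2}:
\[
    T_F(h)=\inf_{\alpha\in\mathcal A_d}\frac{\sum_i\alpha_ih_i-dF^{1/d}}{\sum_i\alpha_i}.
\]
This is an infimum of functions that are affine, nondecreasing in each \(h_i\), and commute with adding constants. Both properties are preserved under taking infima, so they pass to \(T_F\).

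\emph{The case of \(T_0\).} Both properties are immediate for \(T_0(h)=h_1\). If one wants \(T_0\) to be monotone for unordered inputs, interpret it as \(\min_i h_i\), which is again nondecreasing in each variable and commutes with constants.

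The only delicate point is keeping the root in the admissible domain \(t<\min_i h_i'\) after increasing a coordinate. This holds because minima can only increase under coordinatewise increases, so no real obstacle remains.
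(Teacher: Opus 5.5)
Your proof is correct, but your primary route differs from the paper's. Your ``alternative via the Bellman form'' is essentially the paper's whole argument. The paper writes $T_F(h)=\inf_{\alpha\in\mathcal A_d}\bigl(\sum_i\alpha_ih_i-dF^{1/d}\bigr)/\sum_i\alpha_i$. Each quantity inside the infimum is nondecreasing in every $h_i$ and shifts by $c$ when all $h_i$ do, and both properties pass to the infimum. The paper points to Theorem \ref{theorem:equivalent-formulation} for this representation, but the identity itself is what the proof of Lemma \ref{lemma:restrict-set-alpha-2} establishes, so your citation is the more precise one.

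Your main argument never uses the infimum. It relies only on the defining equation, the strict decrease of $P$, and root uniqueness from Lemma \ref{lem:num-root}. Translation invariance comes from uniqueness of the root below $\min_i h_i$, and monotonicity from evaluating the new polynomial at the old root.

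The Bellman route is shorter and structural. It is the same mechanism that gives monotonicity and translation invariance of $F$ in the proof of Theorem \ref{thm:comparison-principle}, but it presupposes the infimum identity. Your route is self-contained and makes explicit two points the paper leaves tacit. First, $T_F$ has to be read on unordered tuples, with the root below $\min_i h_i$, once one coordinate is pushed past another. Second, the old root remains admissible for the new data.

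Your route also yields a little more. The same computation handles a simultaneous increase of all coordinates, which is the form actually used in Theorem \ref{thm:num-inhom-convergence}. And a strict increase of a single $h_j$ makes the new polynomial strictly positive at the old root, which gives strict monotonicity.
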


\begin{proof}
Using the Bellman representation, Theorem \ref{theorem:equivalent-formulation}, we have
\[
    T_F(h_1,\dots,h_d)
    =
    \inf_{\alpha\in\mathcal A_d}
    \frac{
        \sum_{i=1}^d \alpha_i h_i-dF^{1/d}
    }{
        \sum_{i=1}^d\alpha_i
    }.
\]
Monotonicity in the variables \(h_i\) is immediate from this formula. Also, adding a constant \(c\) to each \(h_i\) adds
\[
    \frac{c\sum_i\alpha_i}{\sum_i\alpha_i}=c
\]
inside the infimum, proving \[
T_F(h_1+c,\dots,h_d+c)=T_F(h_1,\dots,h_d)+c.
\] 

The case \(T_0=h_1\) is analogous.
\end{proof}

Now we prove the convergence of the inhomogeneous scheme.

\begin{theorem}\label{thm:num-inhom-convergence}
Assume \eqref{eq:num-plucking-assumption}, \(|\mathcal N_x|=2d\) for every \(x\in \mathcal{U}\), and \(f>0\) in \(\mathcal{U}\).  Then the map \(T_f\) satisfies
\[
    \|T_f[u]-T_f[v]\|_b
    \le
    \left(1-\frac1B\right)\|u-v\|_b
\]
for all \(u,v\) with \(u=v=g\) on \(\mathcal O\). Consequently, for every initial datum $u^0$ satisfying $u^0=g$ on $\mathcal O$, the iteration \eqref{eq:num-inhom-scheme} converges to the unique fixed point $u^\ast$ of $T_f$, equivalently to the unique solution of
\[
    \begin{cases}
    \displaystyle
    \prod_{i=1}^d\bigl(H_i[u](x)-u(x)\bigr)=f(x),
    &x\in \mathcal{U},\\[1ex]
    u(x)=g(x),
    &x\in\mathcal O,
    \end{cases}
\]
that is strictly graph convex on $\mathcal{U}$. Moreover,
\begin{equation}\label{eq:num-inhom-rate}
    \|u^m-u^\ast\|_b
    \le
    \left(1-\frac{\omega}{B}\right)^m
    \|u^0-u^\ast\|_b .
\end{equation}
\end{theorem}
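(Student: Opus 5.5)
We will combine Lemma \ref{lem:num-H-contraction} with the monotonicity and translation property of Lemma \ref{lem:num-T-monotone}. Fix \(u,v\) with \(u=v=g\) on \(\mathcal O\) and set \(\mu\coloneqq\|u-v\|_b\).

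\textbf{Contraction of \(T_f\).} Fix \(x\in\mathcal U\) and put \(c\coloneqq\mu(b(x)-1)\). By Lemma \ref{lem:num-H-contraction},
\[
    H_i[u](x)\le H_i[v](x)+c,\qquad i=1,\dots,d.
\]
We then apply, in turn, the monotonicity of \(T_{f(x)}\) and its translation property from Lemma \ref{lem:num-T-monotone}. This gives \(T_f[u](x)\le T_f[v](x)+c\). Exchanging the roles of \(u\) and \(v\) gives the reverse bound, so
\[
    |T_f[u](x)-T_f[v](x)|\le \mu\,(b(x)-1).
\]
Dividing by \(b(x)\) and using \(1\le b(x)\le B\), we get \((b(x)-1)/b(x)=1-1/b(x)\le 1-1/B\). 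Taking the maximum over \(x\in\mathcal U\) yields the claimed contraction. This is legitimate because \(T_f[u]=T_f[v]=g\) on \(\mathcal O\), so \(T_f[u]-T_f[v]\) vanishes there and \(\|\cdot\|_b\) applies.

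\textbf{The relaxed map.} Define \(S_\omega[u]\coloneqq(1-\omega)u+\omega T_f[u]\) on \(\mathcal U\) and \(S_\omega[u]\coloneqq g\) on \(\mathcal O\). For \(x\in\mathcal U\),
\[
    \frac{|S_\omega[u](x)-S_\omega[v](x)|}{b(x)}\le (1-\omega)\mu+\omega\Bigl(1-\frac1B\Bigr)\mu=\Bigl(1-\frac{\omega}{B}\Bigr)\mu .
\]
The set \(\{w\colon w=g\text{ on }\mathcal O\}\) is a finite-dimensional affine space, hence complete under the metric induced by \(\|\cdot\|_b\), and \(S_\omega\) maps it into itself. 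Banach's fixed point theorem therefore gives a unique fixed point \(u^\ast\) of \(S_\omega\) together with the geometric rate \eqref{eq:num-inhom-rate}, since \(u^{m+1}-u^\ast=S_\omega[u^m]-S_\omega[u^\ast]\). Because \(\omega>0\), fixed points of \(S_\omega\) are exactly the fixed points of \(T_f\).

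\textbf{Identification of the limit.} By the preceding remark, fixed points of \(T_f\) are precisely the strictly graph-convex solutions of the product equation with boundary data \(g\). Uniqueness of such solutions follows either from Banach's theorem or from Corollary \ref{cor:uniqueness-solutions}.

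\textbf{Main obstacle.} The key step is the pointwise estimate \(|T_f[u]-T_f[v]|\le\mu(b-1)\). It requires \(T_F\) to be monotone and translation-equivariant, which Lemma \ref{lem:num-T-monotone} derives from the Bellman formula. That identification relies on Lemma \ref{lemma:restrict-set-alpha-2}, which shows the infimum equals the root \(t^\ast\), so it is worth checking. Everything else is routine.
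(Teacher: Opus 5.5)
Your proposal is correct and follows the paper's proof essentially step for step: the pointwise bound from Lemma~\ref{lem:num-H-contraction}, combined with the monotonicity and translation equivariance of $T_F$ (Lemma~\ref{lem:num-T-monotone}), gives the $(1-1/B)$ contraction, and the damped map $S_\omega$ is then a $(1-\omega/B)$-contraction to which Banach's theorem applies. Your extra remarks make explicit details the paper leaves implicit, namely the completeness of the affine space $\{w=g \text{ on } \mathcal O\}$, the fact that fixed points of $S_\omega$ coincide with those of $T_f$ because $\omega>0$, and the identification of the limit via the remark on fixed points of $T_f$.
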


\begin{proof}
Let \(\mu=\|u-v\|_b\). By Lemma \ref{lem:num-H-contraction},
\[
    H_i[u](x)
    \le
    H_i[v](x)+\mu(b(x)-1),
    \qquad i=1,\dots,d.
\]
Now, from Lemma \ref{lem:num-T-monotone},
we get
\[
    T_f[u](x)
    \le
    T_f[v](x)+\mu(b(x)-1).
\]
Reversing the roles of \(u\) and \(v\), we obtain
\[
    |T_f[u](x)-T_f[v](x)|
    \le
    \mu(b(x)-1).
\]
Dividing by \(b(x)\) and taking the maximum over \(\mathcal{U}\) gives
\[
    \|T_f[u]-T_f[v]\|_b
    \le
    \left(1-\frac1B\right)\|u-v\|_b.
\]
Now define the damped map
\[
    S_{\omega,f}[u]
    \coloneqq
    (1-\omega)u+\omega T_f[u]
    \quad\text{in }\mathcal{U},
    \qquad
    S_{\omega,f}[u]=g
    \quad\text{on }\mathcal O.
\]
The preceding estimate gives
\[
    \|S_{\omega,f}[u]-S_{\omega,f}[v]\|_b
    \le
    \left(1-\frac{\omega}{B}\right)\|u-v\|_b.
\]
Since \(0<\omega\le1\), this is a strict contraction. Banach's fixed-point theorem yields a unique fixed point \(u^\ast\) and the rate \eqref{eq:num-inhom-rate}. By the definition of \(T_f\), this fixed point is exactly the strictly graph-convex solution of the inhomogeneous graph Monge--Amp\`ere equation.
\end{proof}

Next, we turn our attention to the proof of the convergence of the homogeneous scheme.

\begin{theorem}\label{thm:num-hom-convergence}
Assume \eqref{eq:num-plucking-assumption} and \(|\mathcal N_x|=2d\) for every \(x\in \mathcal{U}\). Then
\[
    \|T_0[u]-T_0[v]\|_b
    \le
    \left(1-\frac1B\right)\|u-v\|_b
\]
for all \(u,v\) with \(u=v=g\) on \(\mathcal O\). Consequently, for every initial datum \(u^0=g\) on \(\mathcal O\), the iteration \eqref{eq:num-hom-scheme} converges to the unique solution \(u^\ast\) of
\[
    \begin{cases}
    u(x)=H_1[u](x),
    &x\in \mathcal{U},\\
    u(x)=g(x),
    &x\in\mathcal O.
    \end{cases}
\]
Moreover,
\[
    \|u^m-u^\ast\|_b
    \le
    \left(1-\frac{\omega}{B}\right)^m
    \|u^0-u^\ast\|_b .
\]
\end{theorem}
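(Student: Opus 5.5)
The plan mirrors the proof of Theorem \ref{thm:num-inhom-convergence}, with $T_0[u](x)=H_1[u](x)$ in place of $T_f$. Fix $u,v$ with $u=v=g$ on $\mathcal O$ and set $\mu\coloneqq\|u-v\|_b$.

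First, Lemma \ref{lem:num-H-contraction} with $i=1$ gives, for every $x\in\mathcal U$,
\[
    |H_1[u](x)-H_1[v](x)|\le \mu\bigl(b(x)-1\bigr).
\]
This step does not even need the monotonicity part of Lemma \ref{lem:num-T-monotone}, since $T_0$ is $H_1$ itself. On $\mathcal O$, both $T_0[u]$ and $T_0[v]$ equal $g$, so their difference vanishes there and $\|\cdot\|_b$ applies. Dividing by $b(x)\ge1$ and using $b(x)\le B$ gives $(b(x)-1)/b(x)=1-1/b(x)\le 1-1/B$. Taking the maximum over $\mathcal U$ yields the contraction estimate.

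Next, consider the damped map $S_{\omega,0}[u]\coloneqq(1-\omega)u+\omega H_1[u]$ in $\mathcal U$, with $S_{\omega,0}[u]=g$ on $\mathcal O$. The triangle inequality gives
\[
    \|S_{\omega,0}[u]-S_{\omega,0}[v]\|_b\le\Bigl((1-\omega)+\omega\bigl(1-\tfrac1B\bigr)\Bigr)\mu=\Bigl(1-\tfrac{\omega}{B}\Bigr)\mu.
\]
This factor is strictly less than $1$ for $0<\omega\le1$. The affine set $\{w\colon\mathcal X\to\mathbb R,\ w=g \text{ on }\mathcal O\}$ is a translate of the finite-dimensional normed space of functions vanishing on $\mathcal O$, so it is complete in the metric $\|u-v\|_b$. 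Banach's fixed-point theorem therefore gives a unique fixed point $u^\ast$ and the geometric rate: apply the contraction with $v=u^\ast=S_{\omega,0}[u^\ast]$ and iterate.

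Finally, $u^\ast=S_{\omega,0}[u^\ast]$ with $\omega>0$ means $\omega(H_1[u^\ast]-u^\ast)=0$ in $\mathcal U$. Hence $u^\ast$ solves \eqref{eq:H-M-A}. Conversely, any solution is a fixed point, so by uniqueness of the fixed point it equals $u^\ast$. Alternatively, one can invoke Corollary \ref{cor:homo-uniqueness}, since \eqref{eq:num-plucking-assumption} is exactly \eqref{eq:H1-reachability}.

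There is no real obstacle. The only point needing care is that the barrier $b$ from Theorem \ref{thm:plucking-condition} satisfies $b\ge1$ on $\mathcal U$ and $b=0$ on $\mathcal O$, and this was already noted before Lemma \ref{lem:num-H-contraction}.
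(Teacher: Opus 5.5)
Your proposal is correct and takes essentially the same route as the paper, whose proof simply refers back to the proof of Theorem~\ref{thm:num-inhom-convergence} with $T_0[u]=H_1[u]$ and Lemma~\ref{lem:num-H-contraction} (with $i=1$). You have written out the details the paper leaves implicit: the division by $b(x)\le B$, the damped contraction factor $1-\omega/B$, completeness of the affine space, and the identification of the Banach fixed point with the unique solution of \eqref{eq:H-M-A}.
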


\begin{proof}
The proof follows by the same arguments used for the proof of Theorem \ref{thm:num-inhom-convergence}, using \(T_0[u]=H_1[u]\) and Lemma \ref{lem:num-H-contraction}.
\end{proof}

\subsection{Numerical experiments}

For the numerical tests, we consider an analog of the two-dimensional continuous Monge--Amp\`ere problem in the ball $B_1(0)$. The continuous problem is stated as follows.
\[
    \begin{cases}
        \det (D^2 u)(x) =1, & x\in B_1(0),\\
        u(x)=\frac{1}{2}|x|^2=0.5, & x\in \partial B_1(0).
    \end{cases}
\]

The unique convex solution to this problem is given by $$u(x) = \frac{1}{2}|x|^2.$$

The aforementioned discrete analog of this problem is
\begin{equation}\label{eq:numerical-test-equations}
    \begin{cases}
        \mathcal{M}[u](x) =1, & x\in \mathcal{X}\setminus\mathcal{O},\\
        u(x)=0.5, & x\in \mathcal{O},
    \end{cases}
\end{equation}
where $\mathcal{X}\setminus\mathcal{O}$ and $\mathcal{O}$ are finite subsets of $B_1(0)$ and $\partial B_1(0)$, respectively.
 
So, we aim to obtain a numerical solution given by \eqref{eq:numerical-test-equations} approximating $u(x) = \frac{1}{2}|x|^2$.

\subsubsection{Setup}

Fix integers $1\leq M\leq N$. We take as labeled vertices the $M$ equally spaced points on the unit circle,
\[
\mathcal{O}
\coloneqq
\left\{
x_j=(\cos\theta_j,\sin\theta_j)\in\partial B_1(0)
\,:\,
\theta_j=2\pi\frac{j-1}{M},
\quad j=1,\ldots,M
\right\}.
\]
Thus, $|\mathcal{O}|=M$. For each of the graph structures shown in Figure \ref{fig:woven-forests}, we construct a forest using a chosen unlabeled set $\mathcal U\subset B_1(0)\setminus\mathcal O$, with $|\mathcal{U}|=N-M$, so that the vertex set of the $\mathcal{O}$-woven forest $\mathcal{X}\coloneqq\mathcal{U}\cup\mathcal{O}$ has $|\mathcal{X}|=|\mathcal{U}|+|\mathcal{O}|=N$ vertices in total.

\begin{figure}[ht]
\includegraphics[width=0.35\linewidth]{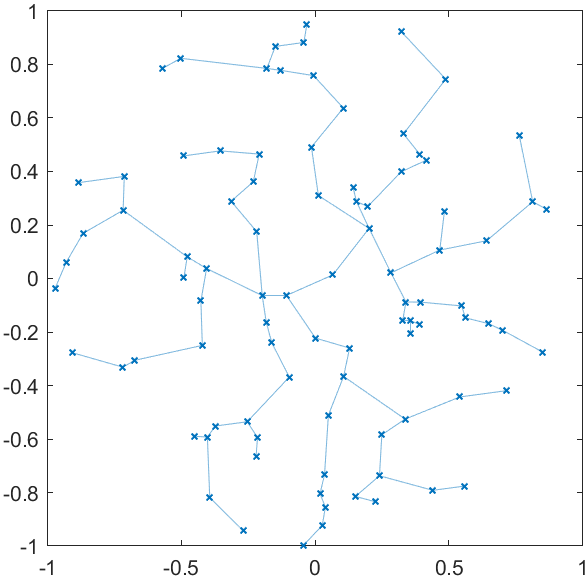}\;
    \includegraphics[width=0.35\linewidth]{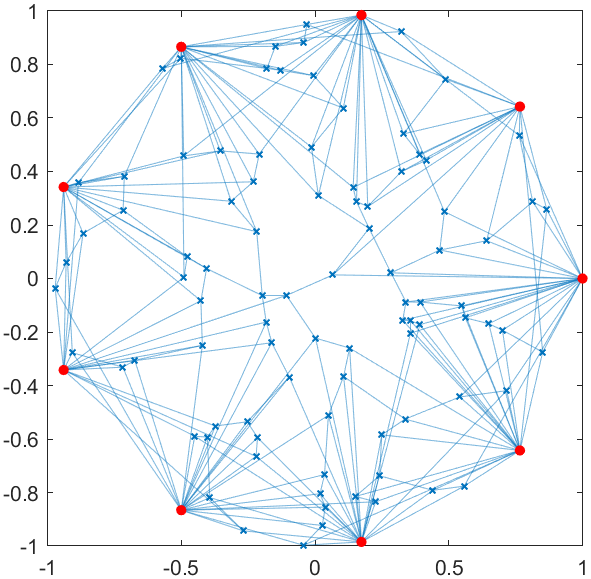}\\
    \medskip
    \includegraphics[width=0.35\linewidth]{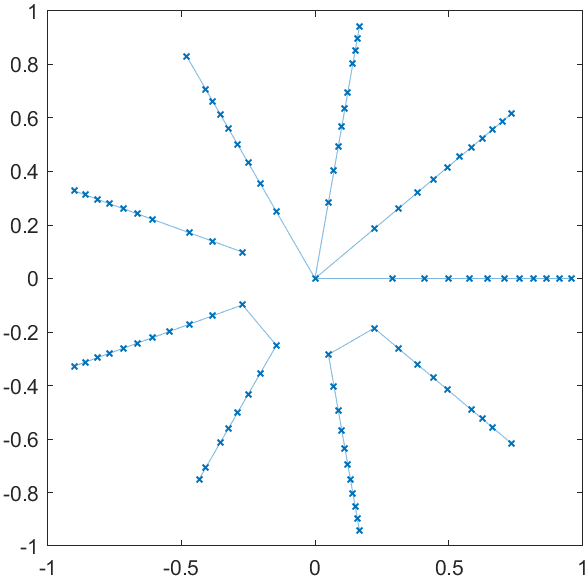}\;
    \includegraphics[width=0.35\linewidth]{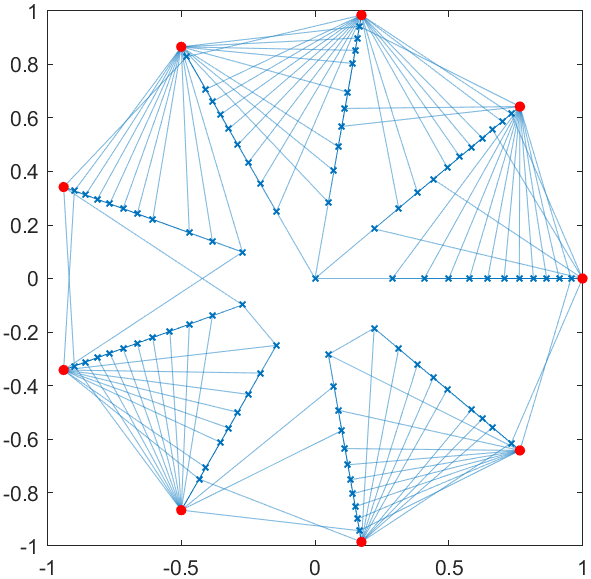}\\
    \medskip
    \includegraphics[width=0.35\linewidth]{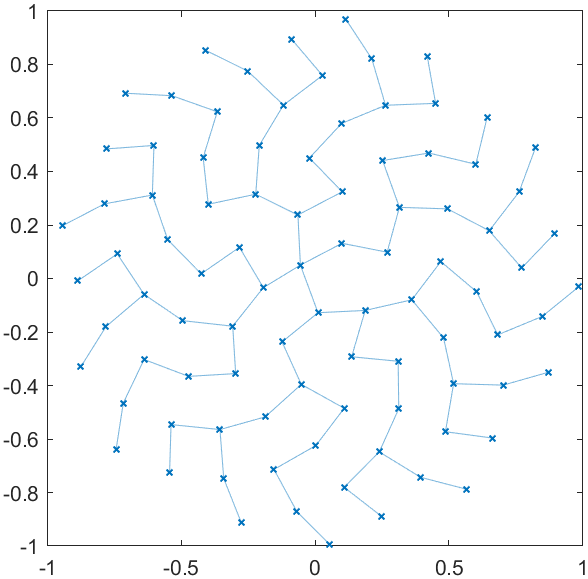}\;
    \includegraphics[width=0.35\linewidth]{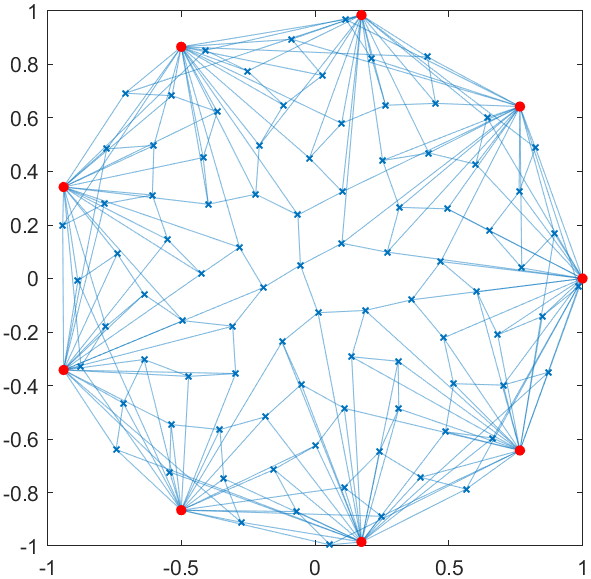}\\
    \medskip
    \includegraphics[width=0.35\linewidth]{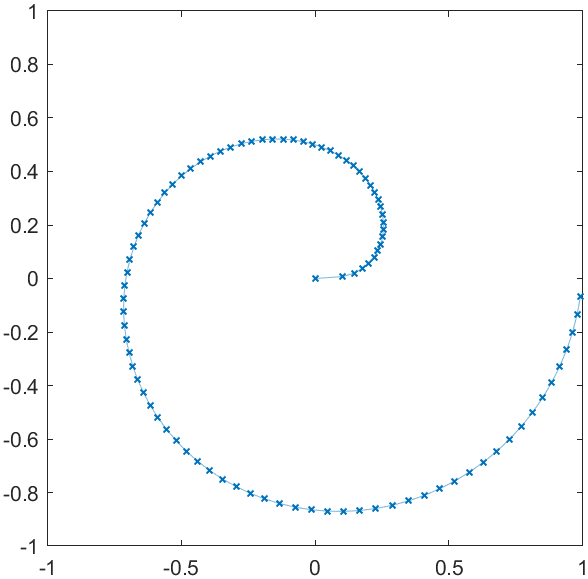}\;
    \includegraphics[width=0.35\linewidth]{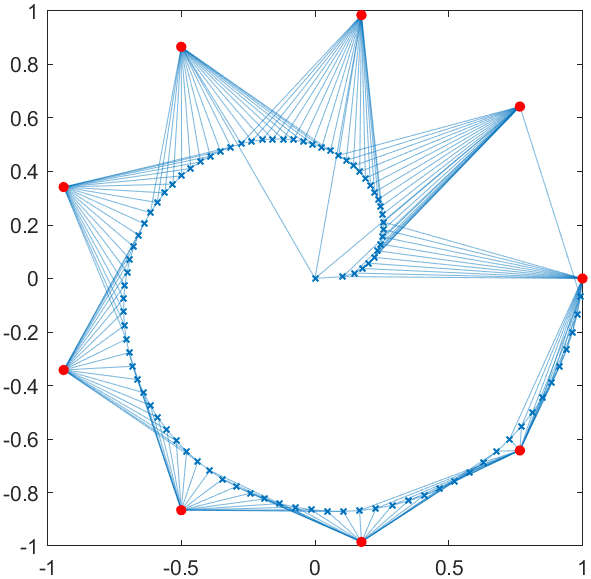}
    \caption{Left column: Forests. Right column: Corresponding woven forests. Top to bottom: randomly sampled, rays shaped, uniformly distanced, spiral shaped.}
    \label{fig:woven-forests}
\end{figure}

\clearpage 

\subsubsection{Inhomogeneous Monge--Amp\`ere}
Now we present results for the inhomogeneous problem. We will perform various tests by varying the graph structure and examining the connectivity of vertices that yield a solution reproducing the parabolic shape of the continuous solution. We include a low-opacity parabola in each for comparison with the solution of the continuous analog of the problem, as shown in Figure \ref{fig:parabola}. We also normalize the results for easier visualization using the scale
$$u_{\text{scaled}} = (u-0.5)\left(\frac{0.5}{0.5-\min(u)}\right)+0.5.$$
This ensures that the results are in the same interval $[0, 0.5]$ as the parabola while preserving their proportions.

We fix $u^0 \equiv 0.5$, $N=100$, $M=9$ and a maximum of $10^4$ iterations. We also fix the step size to be $\omega=1$ for all tests in the iterative scheme \eqref{eq:num-inhom-scheme}.

\begin{figure}[ht]
    \centering
    \includegraphics[width=0.6\linewidth]{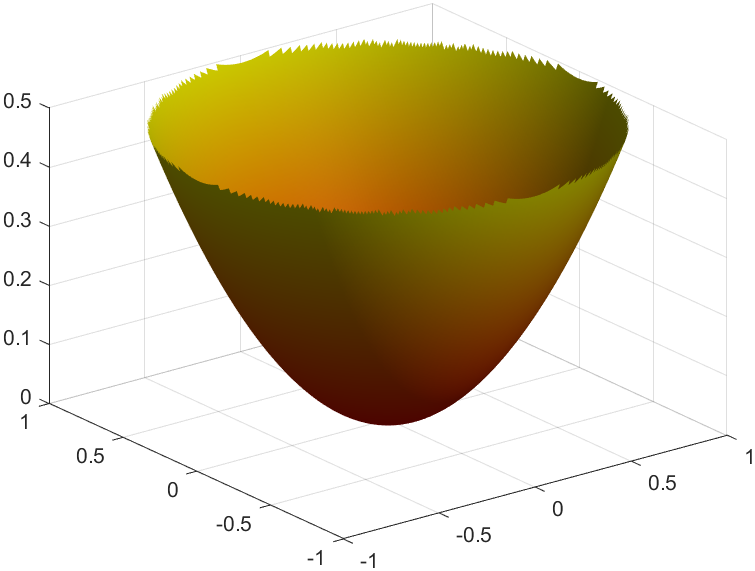}
    \caption{Plot of $\frac{1}{2}|x|^2$.}
    \label{fig:parabola}
\end{figure}

\begin{figure}[ht]
    \centering
    \includegraphics[width=0.45\linewidth]{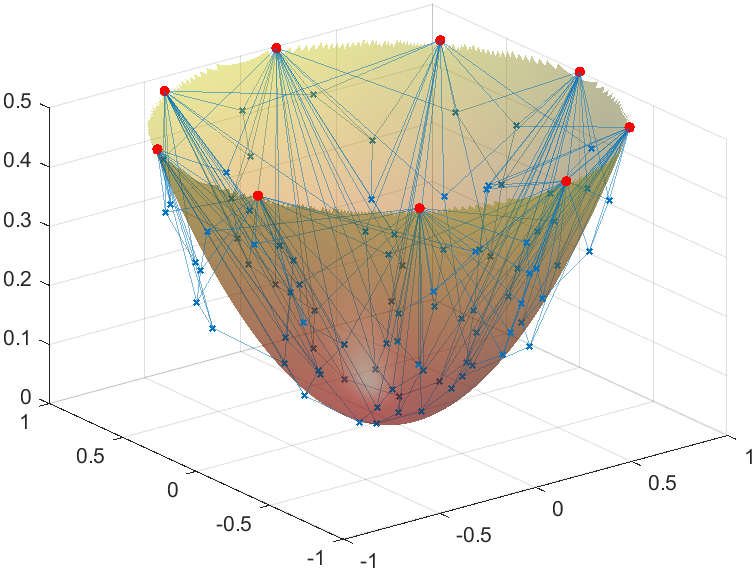}\;
    \includegraphics[width=0.45\linewidth]{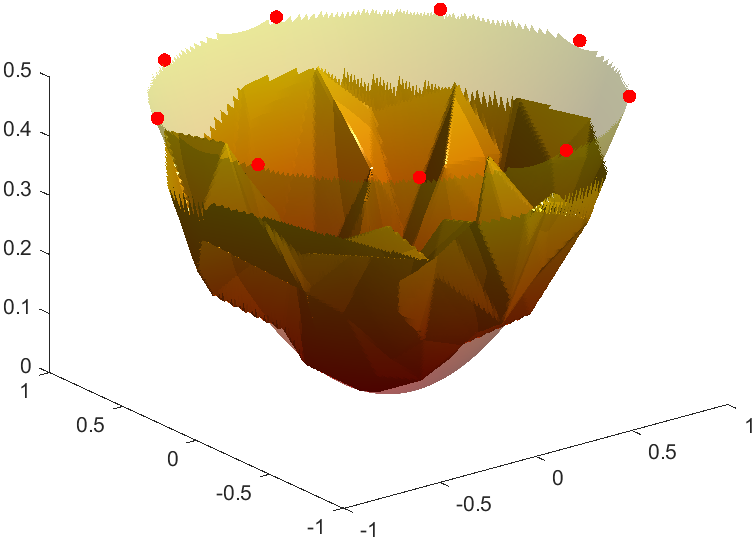}\\
    \includegraphics[width=0.45\linewidth]{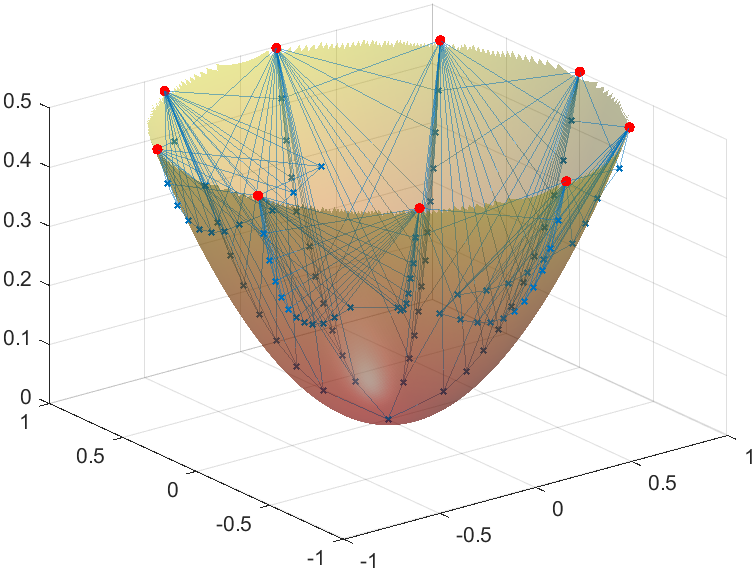}\;
    \includegraphics[width=0.45\linewidth]{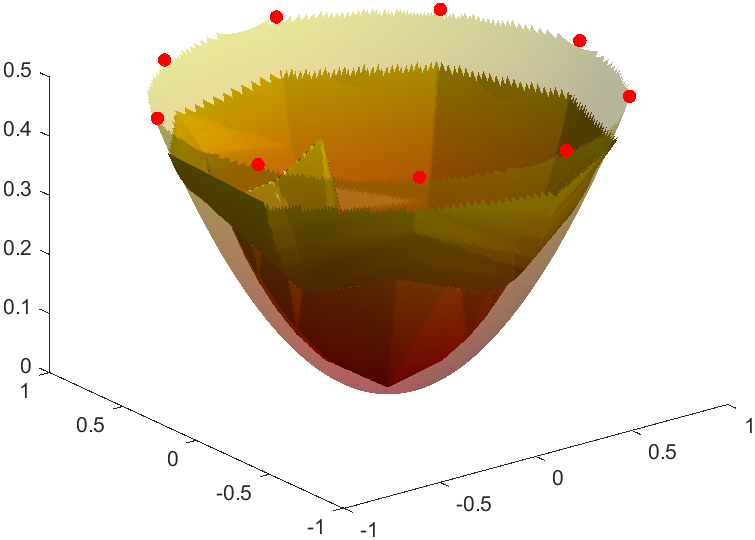}\\
    \includegraphics[width=0.45\linewidth]{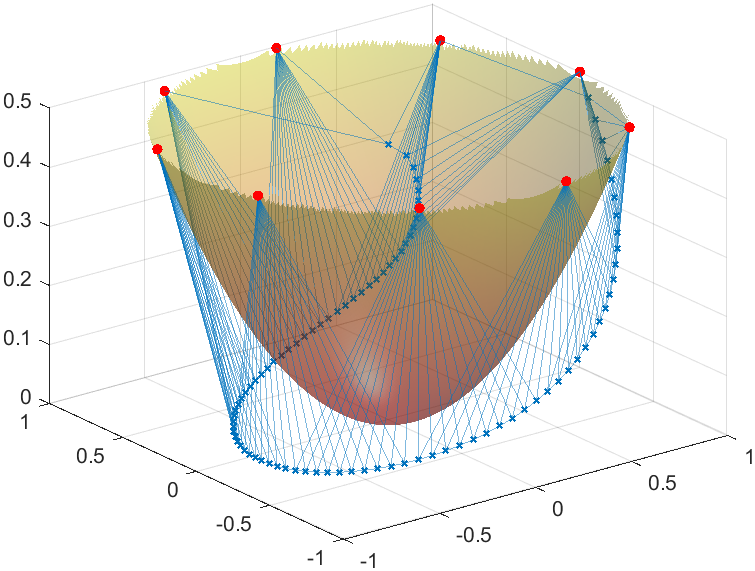}\;
    \includegraphics[width=0.45\linewidth]{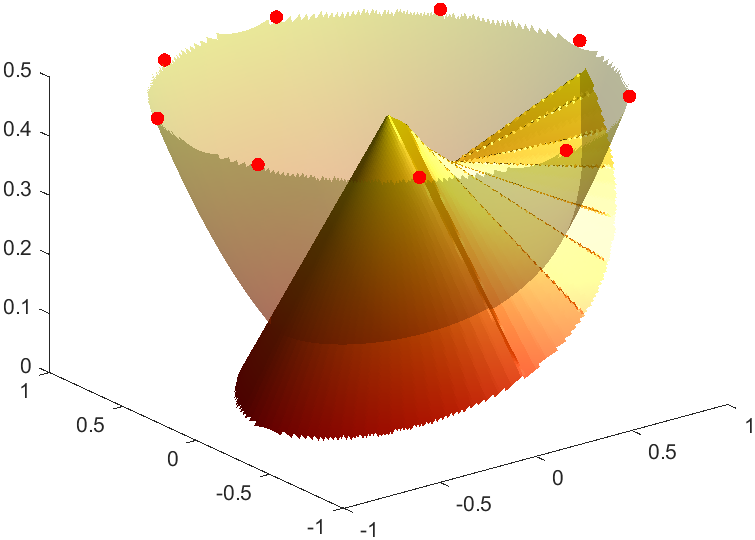}\\
    \includegraphics[width=0.45\linewidth]{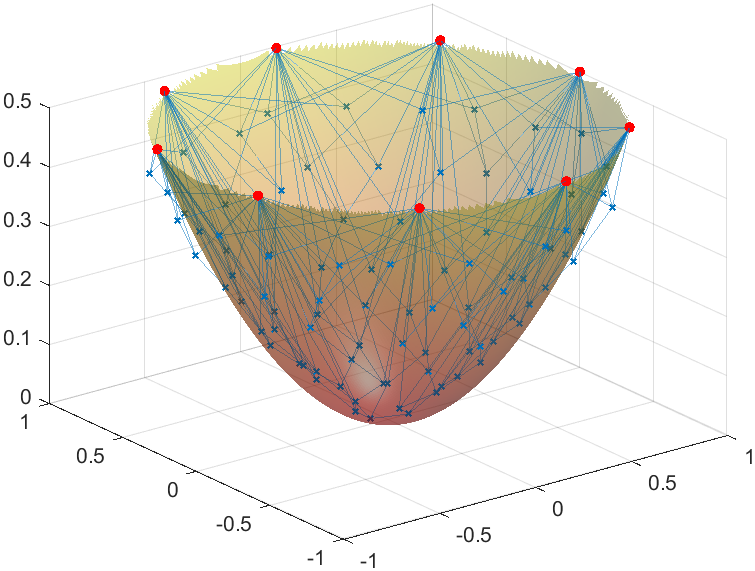}\;
    \includegraphics[width=0.45\linewidth]{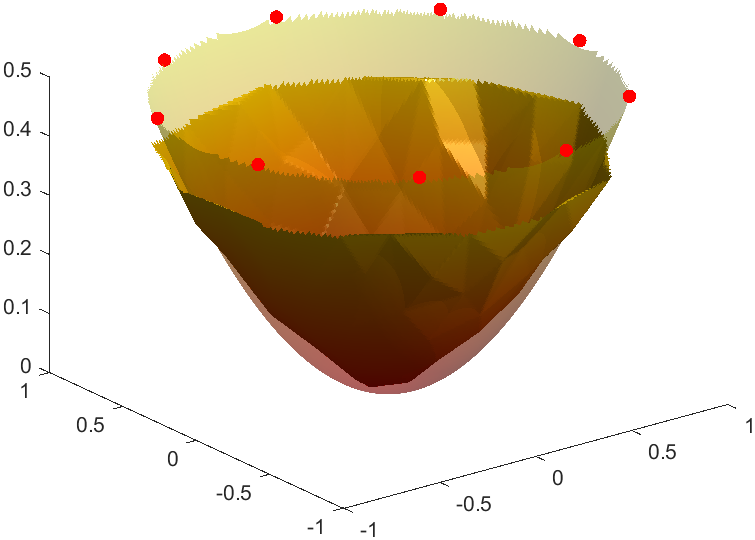}
    
    \caption{Solutions to the inhomogeneous problem. Same order as in Figure \ref{fig:woven-forests}. For better clarity, surface plots (right) exclude the labeled vertices.}
    \label{fig:woven-forests-result}
\end{figure}

We can see in Figure \ref{fig:woven-forests-result} that most results take on a shape similar to a parabola, except the spiral graph. This could be because the geometry of the spiral graph differs markedly from that of the unit ball, whereas the other graphs better emulate that geometry. Moreover, except for the spiral, each graph branches outward from a vertex located near the center of the ball, giving rise to natural levels determined by the graph distance from this central vertex. This layered structure is the main factor shaping the appearance of the solution plot. For example, the random graph has many `spikes', which correspond to high-level vertices close to the origin. The solution for the rays graph contains visual `discontinuities'. The reason could be that the interior forest is not a single tree, so while the graph is fully connected, the scheme does not propagate through elements of $\mathcal{O}$, making each connected component effectively its own graph. The most parabolically shaped results we have found are for the radial tree in Figure \ref{fig:mold-forest}.

\begin{figure}[ht]
    \centering
    \includegraphics[width=0.4\linewidth]{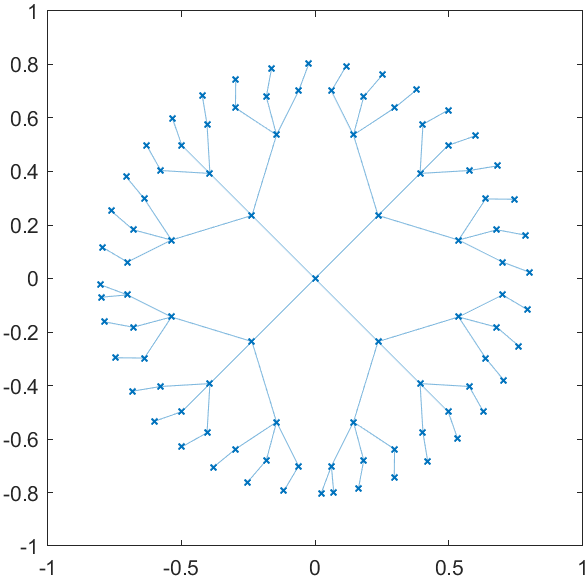}\qquad
    \includegraphics[width=0.4\linewidth]{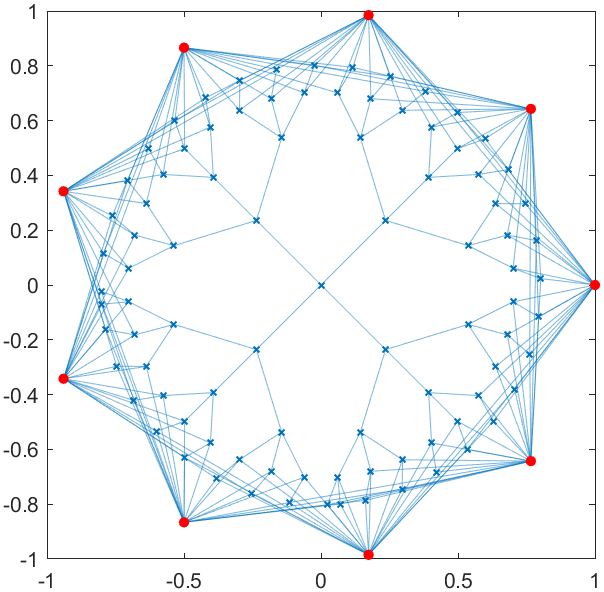}\\
    \medskip
    \includegraphics[width=0.45\linewidth]{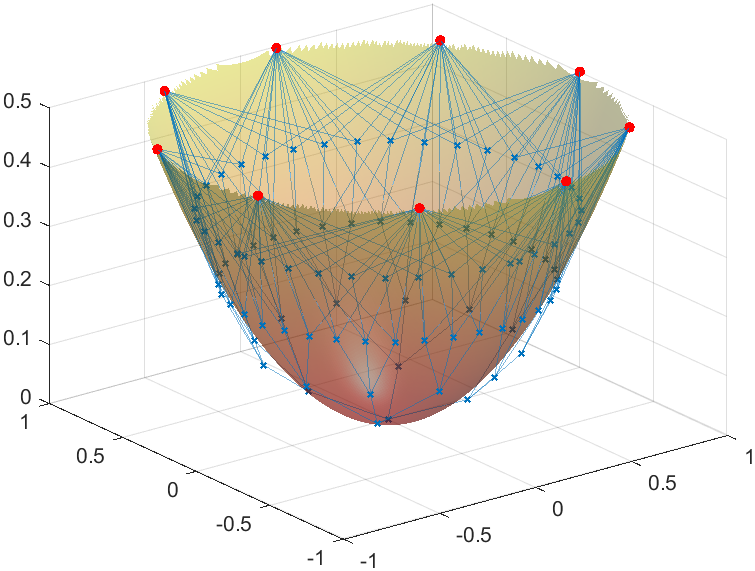}\;
    \includegraphics[width=0.45\linewidth]{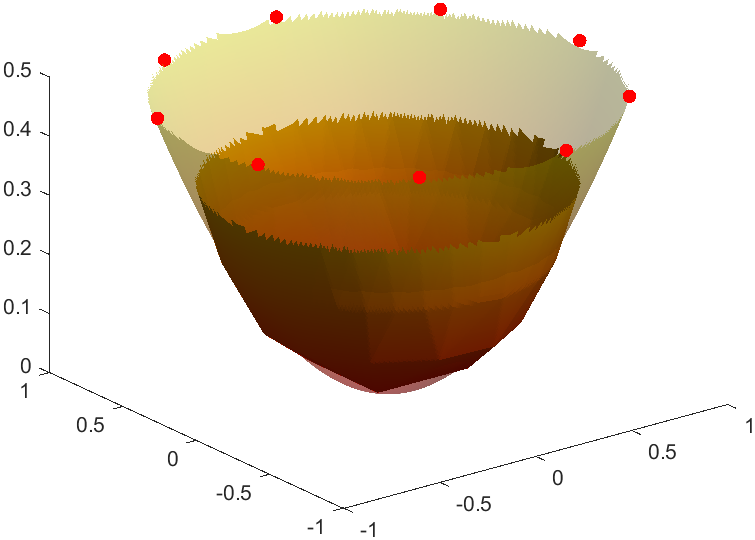}
    \caption{Results for the radial tree.}
    \label{fig:mold-forest}
\end{figure}

\subsubsection{Comparison with graph Laplacian}

In the continuum setting, the Diri\-chlet problem 
\begin{equation}
    \begin{cases}
        \Delta u(x) = 2, & x\in B_1(0),\\
        u(x)=\frac{1}{2}|x|^2=0.5, & x\in \partial B_1(0)
    \end{cases}
\end{equation}
has the same solution as the Monge--Amp\`ere equation with $f=1$. We will be using this fact as a point of comparison between graph-Laplacian regularization and our graph Monge--Amp\`ere regularization, pitting \eqref{eq:numerical-test-equations} against the graph Dirichlet problem
\begin{equation}\label{eq:dirichlet-poisson-equation}
    \begin{cases}
        \mathcal{L}[u](x) =2, & x\in \mathcal{X}\setminus\mathcal{O},\\
        u(x)=0.5, & x\in \mathcal{O}.
    \end{cases}
\end{equation}
Numerical solutions of \eqref{eq:dirichlet-poisson-equation} were
computed using the methods of Flores, Calder, and Lerman
\cite{FloresCalderLerman2022}, followed by the same rescaling as
above.

\begin{figure}[ht]
    \centering
    \includegraphics[width=0.45\linewidth]{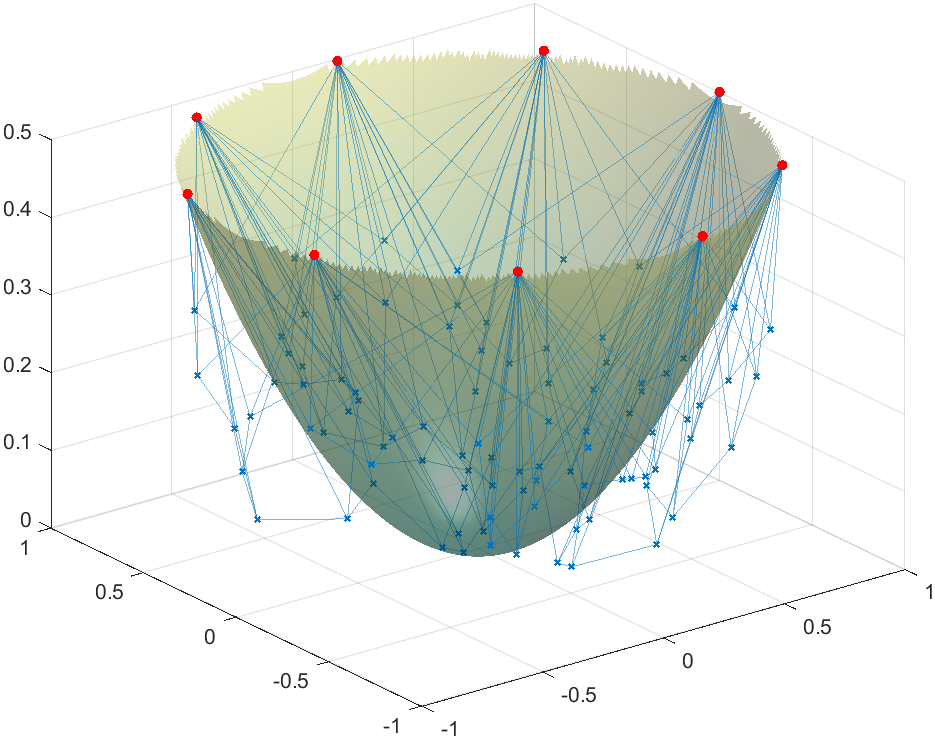}\;
    \includegraphics[width=0.45\linewidth]{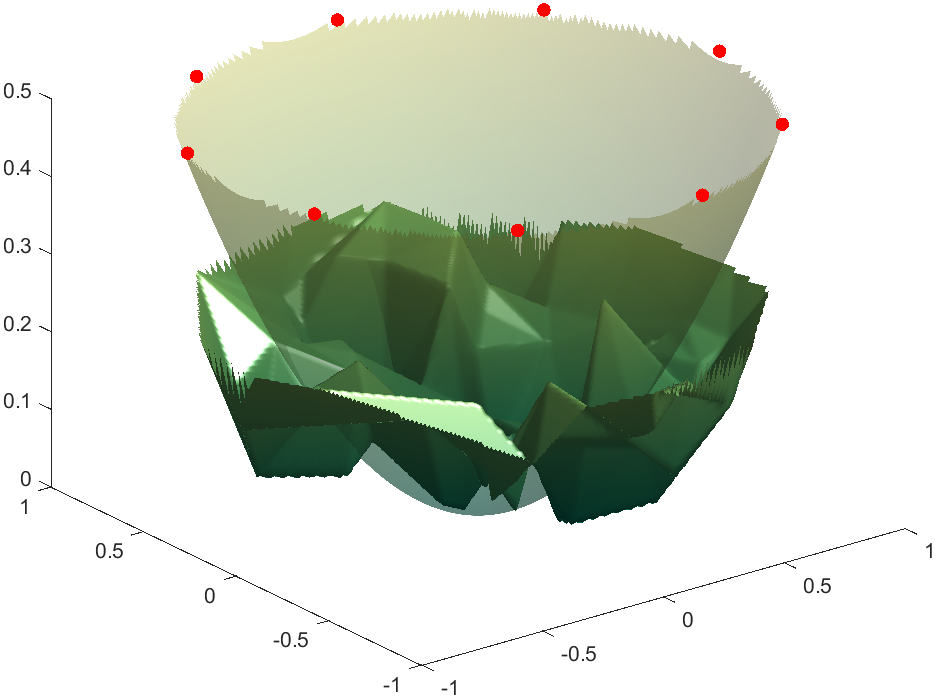}\\
    \includegraphics[width=0.45\linewidth]{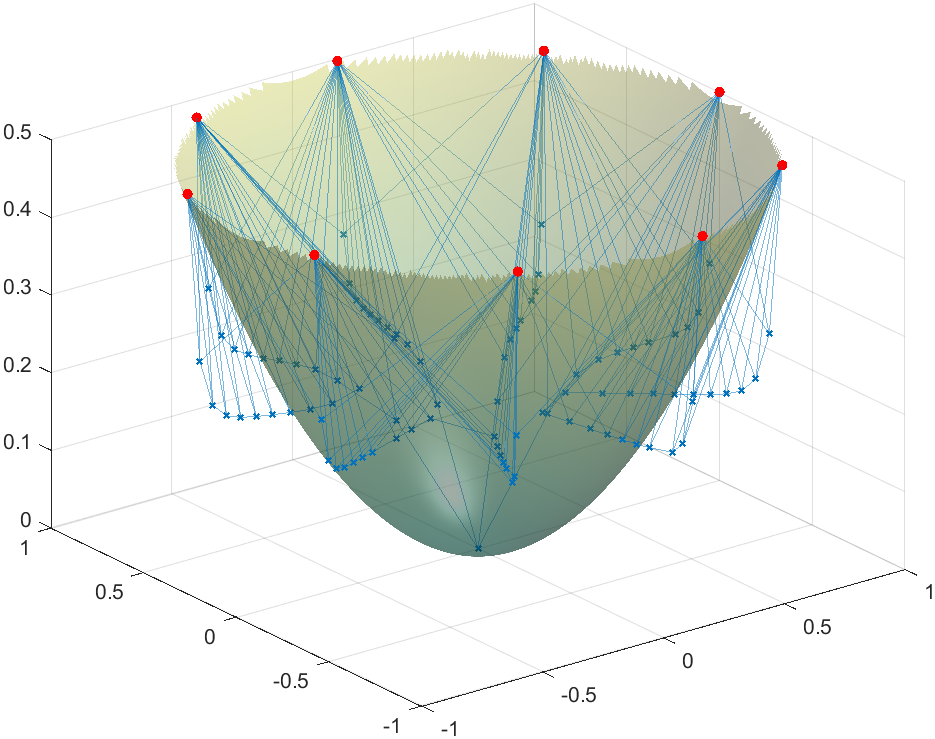}\;
    \includegraphics[width=0.45\linewidth]{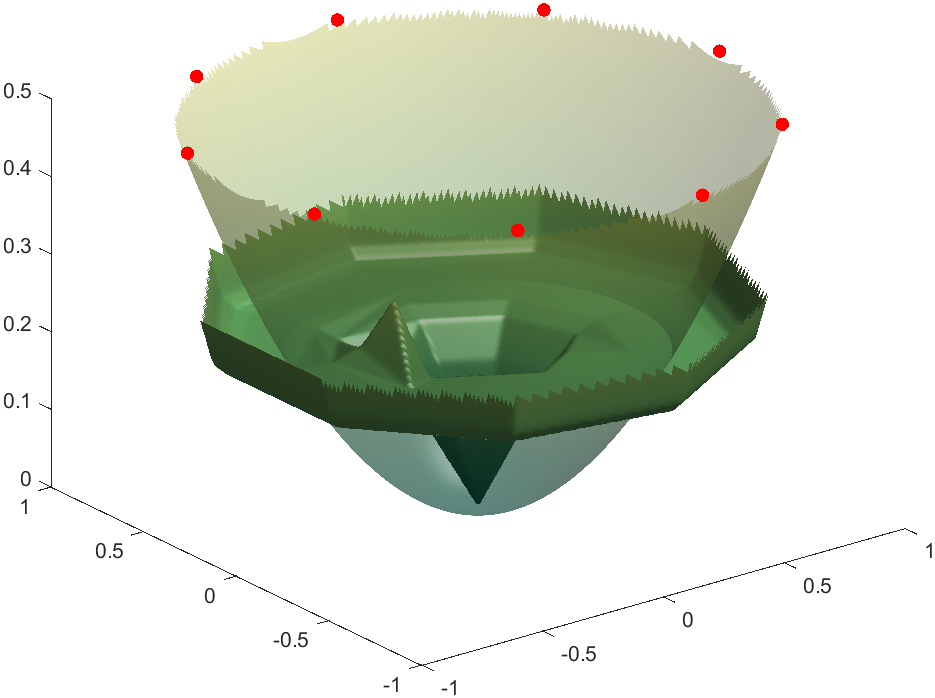}\\
    \includegraphics[width=0.45\linewidth]{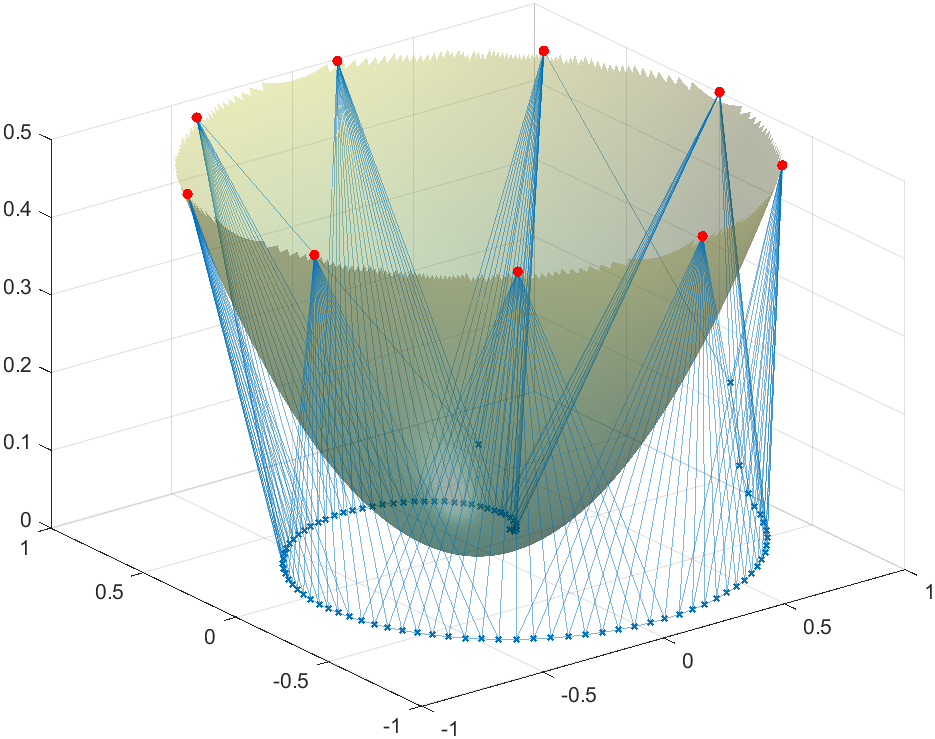}\;
    \includegraphics[width=0.45\linewidth]{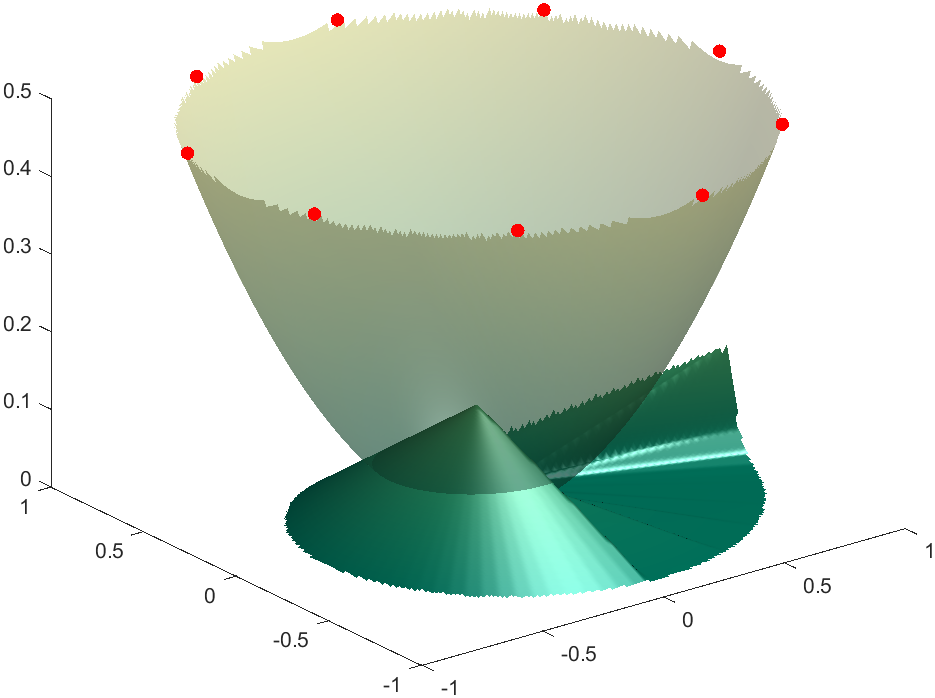}\\
    \includegraphics[width=0.45\linewidth]{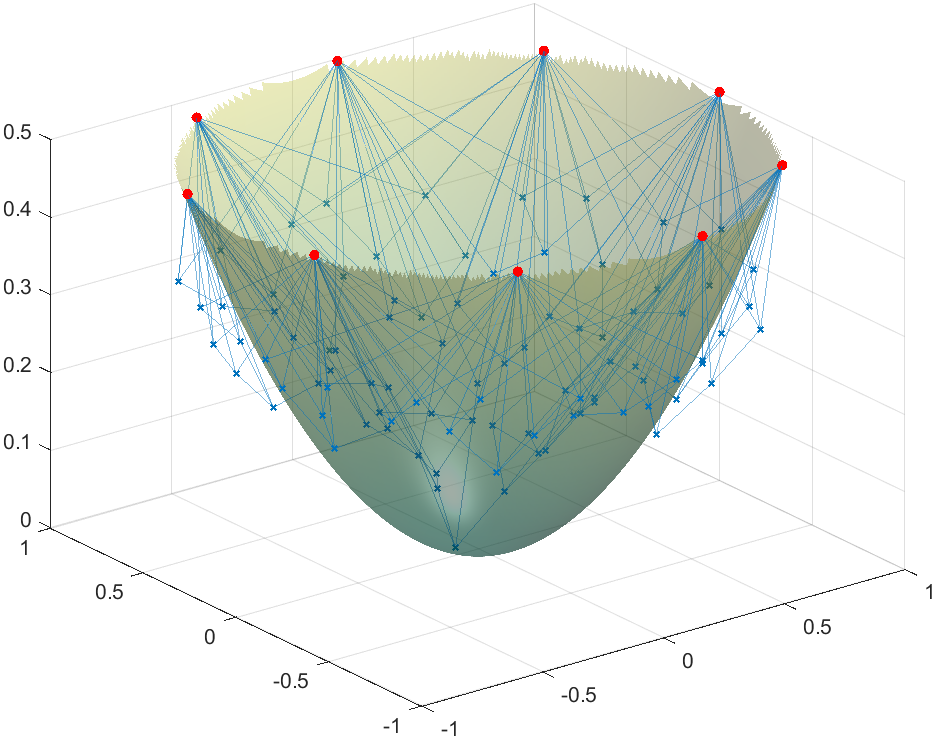}\;
    \includegraphics[width=0.45\linewidth]{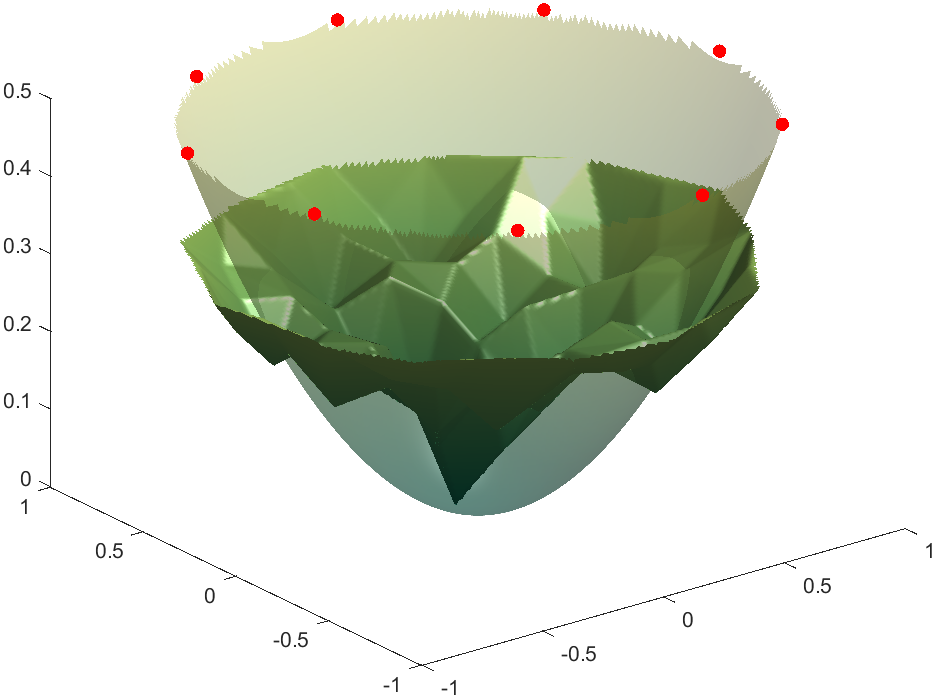}
    \caption{Solution of \eqref{eq:dirichlet-poisson-equation} on the graphs in Figure \ref{fig:woven-forests}.}
    \label{fig:laplace-regularization}
\end{figure}

\begin{figure}[ht]
    \centering
    \includegraphics[width=0.45\linewidth]{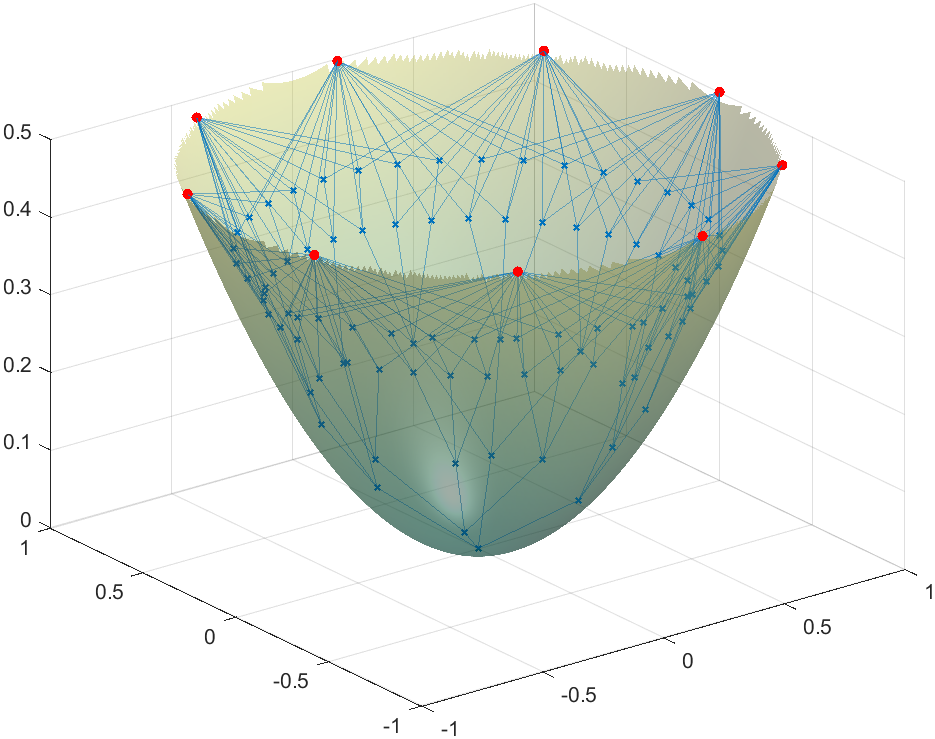}\;
    \includegraphics[width=0.45\linewidth]{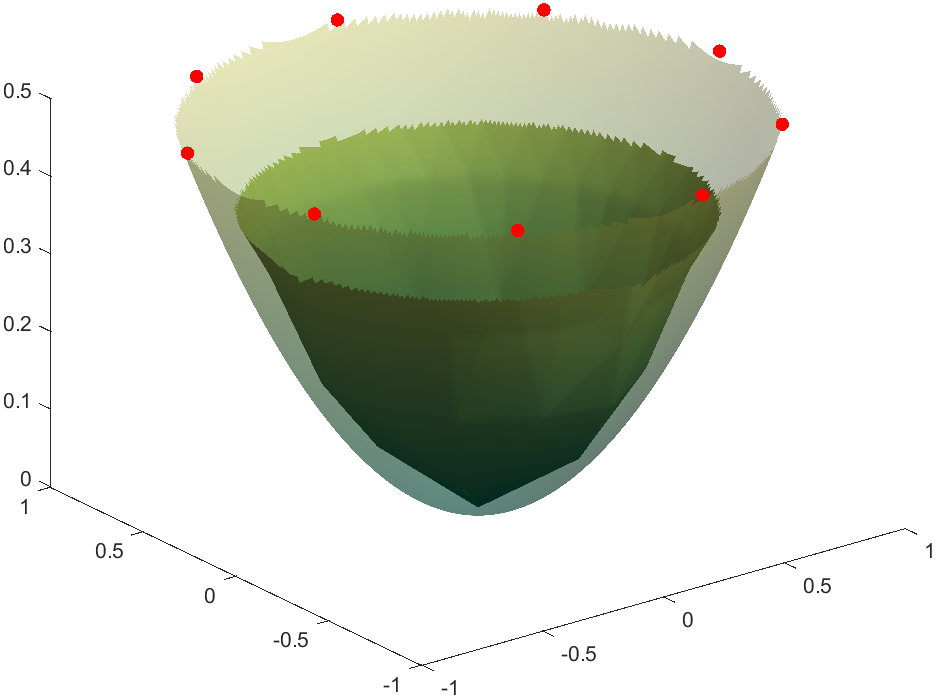}
    \caption{Solution of \eqref{eq:dirichlet-poisson-equation} on the graph in Figure \ref{fig:mold-forest}.}
    \label{fig:laplace-regularization-mold}
\end{figure}

\begin{table}[ht]
    \centering
    \begin{tabular}{c|c|c}
        $\max_{x\in \mathcal{X}\setminus \mathcal{O}}\left|u_{\text{scaled}}(x)-\frac{1}{2}|x|^2\right|$ & MA & Laplace \\
        \hline
        Random & 0.284112 & 0.277751\\
        Rays & 0.395477 & 0.240851 \\
        Spiral & 0.465382 & 0.475445 \\
        Uniform & 0.096614 & 0.183214 \\
        Radial tree & 0.035130 & 0.075822
    \end{tabular}
    
    \medskip
    
    \begin{tabular}{c|c|c}
        $\sqrt{\sum_{x\in\mathcal{X}\setminus \mathcal{O}}\left|u_{\text{scaled}}(x)-\frac{1}{2}|x|^2\right|^2}$ & MA & Laplace \\
        \hline
        Random & 0.802448 & 1.239437\\
        Rays & 0.869006 & 1.181113 \\
        Spiral & 2.133426 & 2.704896 \\
        Uniform & 0.336707 & 0.932423 \\
        Radial tree & 0.252256 & 0.599208
    \end{tabular}
    \medskip
    \caption{Errors with respect to the continuum solution.}
    \label{tab:error}
\end{table}

The graph-Laplacian solutions are generally flatter, with the largest
variations concentrated near the endpoints and central vertices.
Table~\ref{tab:error} shows that the graph Monge--Amp\`ere method has
a smaller discrete $\ell^2$ error in all five experiments. In the
maximum norm, it performs better for the spiral, uniform, and radial
graphs, whereas the graph Laplacian performs better for the random
and ray graphs. These experiments, therefore, suggest that the graph
Monge--Amp\`ere operator can provide more accurate qualitative
approximations on suitably organized tree-like graphs, particularly
for the uniform and radial constructions.

\subsection{Future work}

One possible direction for future work is to incorporate positive edge weights into the definition of the discrete eigenvalues. Let \(\omega_{xy}>0\) be a weight associated with the edge joining \(x\)
and \(y\). A weighted version of the eigenvalues could be defined by
\[
    \lambda_i^\omega[u](x)
    \coloneqq
    \min_{\substack{S\subseteq\mathcal N_x\\ |S|=2i}}
    \max_{\substack{y,z\in S\\ y\neq z}} \,
    \frac12 \left[
        \omega_{xy}
        \bigl(u(y)-u(x)\bigr)
        +
        \omega_{xz}
        \bigl(u(z)-u(x)\bigr)
    \right].
\]

Positive edge weights may be necessary to obtain a geometrically consistent scaling and a meaningful continuum limit. At present, the normalization is graph-dependent, so the numerical experiments assess qualitative shape rather than convergence.

\end{document}

%% file: TikZ/PeelingFigure.tikz
\tikzset{
    active/.style={
        circle, draw=blue!60!black, fill=blue!12,
        thick, minimum size=7mm, inner sep=0pt
    },
    peel/.style={
        circle, draw=orange!80!black, fill=orange!25,
        thick, minimum size=7mm, inner sep=0pt
    },
    removed/.style={
        circle, draw=gray!65, fill=gray!15,
        thick, minimum size=7mm, inner sep=0pt, text=gray!70
    },
    boundary/.style={
        circle, draw=red!70!black, fill=red!15,
        thick, minimum size=7mm, inner sep=0pt
    },
    oldedge/.style={line width=0.6pt, draw=gray!45},
    activeedge/.style={line width=1.1pt, draw=black},
    boundaryedge/.style={line width=0.6pt, draw=gray!55, dashed}
}

\begin{minipage}{0.47\textwidth}
\centering
\begin{tikzpicture}[scale=0.9]

\node at (1.8,2.05) {\small Step 0: find $A_0$};

\node[peel]   (a) at (0,0) {$a$};
\node[active] (b) at (1.2,0) {$b$};
\node[active] (c) at (2.4,0) {$c$};
\node[active] (d) at (3.6,0) {$d$};
\node[peel]   (e) at (2.4,1.1) {$e$};
\node[peel]   (f) at (1.2,-1.1) {$f$};
\node[peel]   (g) at (3.6,-1.1) {$g$};

\node[boundary] (o1) at (0,1.1) {$o_1$};
\node[boundary] (o2) at (2.4,-1.1) {$o_2$};

\draw[activeedge] (a)--(b);
\draw[activeedge] (b)--(c);
\draw[activeedge] (c)--(d);
\draw[activeedge] (c)--(e);
\draw[activeedge] (b)--(f);
\draw[activeedge] (d)--(g);

\draw[boundaryedge] (o1)--(a);
\draw[boundaryedge] (o1)--(e);
\draw[boundaryedge] (o2)--(f);
\draw[boundaryedge] (o2)--(g);

\node at (1.8,-1.85) {\scriptsize
$S_0=\mathcal{U},\quad A_0=\{a,e,f,g\}$};

\end{tikzpicture}
\end{minipage}
\hfill
\begin{minipage}{0.47\textwidth}
\centering
\begin{tikzpicture}[scale=0.9]

\node at (1.8,2.05) {\small Step 1: remove $A_0$};

\node[removed] (a) at (0,0) {$a$};
\node[peel]    (b) at (1.2,0) {$b$};
\node[active]  (c) at (2.4,0) {$c$};
\node[peel]    (d) at (3.6,0) {$d$};
\node[removed] (e) at (2.4,1.1) {$e$};
\node[removed] (f) at (1.2,-1.1) {$f$};
\node[removed] (g) at (3.6,-1.1) {$g$};

\node[boundary] (o1) at (0,1.1) {$o_1$};
\node[boundary] (o2) at (2.4,-1.1) {$o_2$};

\draw[oldedge] (a)--(b);
\draw[oldedge] (b)--(c);
\draw[oldedge] (c)--(d);
\draw[oldedge] (c)--(e);
\draw[oldedge] (b)--(f);
\draw[oldedge] (d)--(g);

\draw[boundaryedge] (o1)--(a);
\draw[boundaryedge] (o1)--(e);
\draw[boundaryedge] (o2)--(f);
\draw[boundaryedge] (o2)--(g);

\draw[activeedge] (b)--(c);
\draw[activeedge] (c)--(d);

\node at (1.8,-1.85) {\scriptsize
$S_1=\{b,c,d\},\quad A_1=\{b,d\}$};

\end{tikzpicture}
\end{minipage}

\medskip
\medskip

\begin{minipage}{0.47\textwidth}
\centering
\begin{tikzpicture}[scale=0.9]

\node at (1.8,2.05) {\small Step 2: remove $A_1$};

\node[removed] (a) at (0,0) {$a$};
\node[removed] (b) at (1.2,0) {$b$};
\node[peel]    (c) at (2.4,0) {$c$};
\node[removed] (d) at (3.6,0) {$d$};
\node[removed] (e) at (2.4,1.1) {$e$};
\node[removed] (f) at (1.2,-1.1) {$f$};
\node[removed] (g) at (3.6,-1.1) {$g$};

\node[boundary] (o1) at (0,1.1) {$o_1$};
\node[boundary] (o2) at (2.4,-1.1) {$o_2$};

\draw[oldedge] (a)--(b);
\draw[oldedge] (b)--(c);
\draw[oldedge] (c)--(d);
\draw[oldedge] (c)--(e);
\draw[oldedge] (b)--(f);
\draw[oldedge] (d)--(g);

\draw[boundaryedge] (o1)--(a);
\draw[boundaryedge] (o1)--(e);
\draw[boundaryedge] (o2)--(f);
\draw[boundaryedge] (o2)--(g);

\node at (1.8,-1.85) {\scriptsize
$S_2=\{c\},\quad A_2=\{c\}$};

\end{tikzpicture}
\end{minipage}
\hfill
\begin{minipage}{0.47\textwidth}
\centering
\begin{tikzpicture}[scale=0.9]

\node at (1.8,2.05) {\small Step 3: terminate};

\node[removed] (a) at (0,0) {$a$};
\node[removed] (b) at (1.2,0) {$b$};
\node[removed] (c) at (2.4,0) {$c$};
\node[removed] (d) at (3.6,0) {$d$};
\node[removed] (e) at (2.4,1.1) {$e$};
\node[removed] (f) at (1.2,-1.1) {$f$};
\node[removed] (g) at (3.6,-1.1) {$g$};

\node[boundary] (o1) at (0,1.1) {$o_1$};
\node[boundary] (o2) at (2.4,-1.1) {$o_2$};

\draw[oldedge] (a)--(b);
\draw[oldedge] (b)--(c);
\draw[oldedge] (c)--(d);
\draw[oldedge] (c)--(e);
\draw[oldedge] (b)--(f);
\draw[oldedge] (d)--(g);

\draw[boundaryedge] (o1)--(a);
\draw[boundaryedge] (o1)--(e);
\draw[boundaryedge] (o2)--(f);
\draw[boundaryedge] (o2)--(g);

\node at (1.8,-1.85) {\scriptsize
$S_3=\emptyset$};

\end{tikzpicture}
\end{minipage}

%% file: TikZ/SimpleEx.tikz
\begin{tikzpicture}[
    scale=1.3,
    vertex/.style={circle, draw, thick, minimum size=8mm, inner sep=0pt},
    unlabeled/.style={vertex, fill=blue!15},
    labeled/.style={vertex, fill=red!20},
    edge/.style={thick}
]

\node[unlabeled] (a) at (0,0) {$a$};
\node[unlabeled] (b) at (2,0) {$b$};
\node[labeled]   (c) at (1,1.7) {$c$};

\draw[edge] (a) -- (b);
\draw[edge] (b) -- (c);
\draw[edge] (c) -- (a);

\node at (0,-0.45) {$a\in \mathcal{U}$};
\node at (2,-0.45) {$b\in \mathcal{U}$};
\node at (1,2.15) {$c\in\mathcal O$};

\node at (1,-0.95) {$\mathcal{U}=\mathcal X\setminus\mathcal O=\{a,b\}$};

\end{tikzpicture}

%% file: TikZ/Forest.tikz
\begin{tikzpicture}[scale=0.65]
	\begin{pgfonlayer}{nodelayer}
		\node [style=Unlabeled] (0) at (-2.75, 0) {};
		\node [style=Unlabeled] (1) at (-2.75, 2) {};
		\node [style=Unlabeled] (2) at (-4, 2.25) {};
		\node [style=Unlabeled] (3) at (-1.75, 2.75) {};
		\node [style=Unlabeled] (4) at (-6, 3.75) {};
		\node [style=Unlabeled] (5) at (-4.5, 3.25) {};
		\node [style=Unlabeled] (6) at (-2.5, 3.75) {};
		\node [style=Unlabeled] (7) at (-0.5, 3.5) {};
		\node [style=Unlabeled] (8) at (-1.75, 4.25) {};
		\node [style=Unlabeled] (9) at (-3.75, 4.25) {};
		\node [style=Unlabeled] (10) at (3.75, 0) {};
		\node [style=Unlabeled] (11) at (4.25, 1) {};
		\node [style=Unlabeled] (12) at (2.75, 2.75) {};
		\node [style=Unlabeled] (14) at (1, 3) {};
		\node [style=Unlabeled] (15) at (1.75, 3.75) {};
		\node [style=Unlabeled] (16) at (5, 3.5) {};
		\node [style=Unlabeled] (17) at (3.75, 3.25) {};
		\node [style=Unlabeled] (18) at (5.75, 2.75) {};
		\node [style=Unlabeled] (19) at (2.25, 4.5) {};
		\node [style=Unlabeled] (20) at (3, 1.75) {};
		\node [style=Unlabeled] (21) at (0, 0) {};
		\node [style=Unlabeled] (22) at (-6.75, 0) {};
		\node [style=Unlabeled] (23) at (7, 0) {};
		\node [style=Labeled] (29) at (-5, 4.5) {};
		\node [style=Labeled] (30) at (-1, 2.5) {};
		\node [style=Labeled] (31) at (0.75, 4.25) {};
		\node [style=Labeled] (32) at (5, 2.25) {};
		\node [style=Labeled] (33) at (5, 5) {};
	\end{pgfonlayer}
	\begin{pgfonlayer}{edgelayer}
		\draw (2) to (1);
		\draw (1) to (0);
		\draw (1) to (3);
		\draw (3) to (6);
		\draw (3) to (7);
		\draw (6) to (8);
		\draw (5) to (9);
		\draw (5) to (4);
		\draw (5) to (2);
		\draw (11) to (10);
		\draw (16) to (18);
		\draw (15) to (19);
		\draw (15) to (14);
		\draw (15) to (12);
		\draw (20) to (11);
		\draw (20) to (12);
		\draw (17) to (20);
		\draw (17) to (16);
		\draw (21) to (0);
		\draw (0) to (22);
		\draw (10) to (23);
		\draw (10) to (21);
	\end{pgfonlayer}
\end{tikzpicture}

%% file: TikZ/WeavedForest.tikz
\begin{tikzpicture}[scale=0.65]
	\begin{pgfonlayer}{nodelayer}
		\node [style=Unlabeled] (0) at (-2.75, 0) {};
		\node [style=Unlabeled] (1) at (-2.75, 2) {};
		\node [style=Unlabeled] (2) at (-4, 2.25) {};
		\node [style=Unlabeled] (3) at (-1.75, 2.75) {};
		\node [style=Unlabeled] (4) at (-6, 3.75) {};
		\node [style=Unlabeled] (5) at (-4.5, 3.25) {};
		\node [style=Unlabeled] (6) at (-2.5, 3.75) {};
		\node [style=Unlabeled] (7) at (-0.5, 3.5) {};
		\node [style=Unlabeled] (8) at (-1.75, 4.25) {};
		\node [style=Unlabeled] (9) at (-3.75, 4.25) {};
		\node [style=Unlabeled] (10) at (3.75, 0) {};
		\node [style=Unlabeled] (11) at (4.25, 1) {};
		\node [style=Unlabeled] (12) at (2.75, 2.75) {};
		\node [style=Unlabeled] (14) at (1, 3) {};
		\node [style=Unlabeled] (15) at (1.75, 3.75) {};
		\node [style=Unlabeled] (16) at (5, 3.5) {};
		\node [style=Unlabeled] (17) at (3.75, 3.25) {};
		\node [style=Unlabeled] (18) at (5.75, 2.75) {};
		\node [style=Unlabeled] (19) at (2.25, 4.5) {};
		\node [style=Unlabeled] (20) at (3, 1.75) {};
		\node [style=Unlabeled] (21) at (0, 0) {};
		\node [style=Unlabeled] (22) at (-6.75, 0) {};
		\node [style=Unlabeled] (23) at (7, 0) {};
		\node [style=Labeled] (24) at (-5, 4.5) {};
		\node [style=Labeled] (25) at (5, 5) {};
		\node [style=Labeled] (26) at (0.75, 4.25) {};
		\node [style=Labeled] (27) at (5, 2.25) {};
		\node [style=Labeled] (28) at (-1, 2.5) {};
	\end{pgfonlayer}
	\begin{pgfonlayer}{edgelayer}
		\draw (2) to (1);
		\draw (1) to (0);
		\draw (1) to (3);
		\draw (3) to (6);
		\draw (3) to (7);
		\draw (6) to (8);
		\draw (5) to (9);
		\draw (5) to (4);
		\draw (5) to (2);
		\draw (11) to (10);
		\draw (16) to (18);
		\draw (15) to (19);
		\draw (15) to (14);
		\draw (15) to (12);
		\draw (20) to (11);
		\draw (20) to (12);
		\draw (17) to (20);
		\draw (17) to (16);
		\draw (21) to (0);
		\draw (0) to (22);
		\draw (10) to (23);
		\draw (10) to (21);
		\draw (28) to (21);
		\draw (28) to (7);
		\draw (28) to (14);
		\draw (26) to (14);
		\draw (26) to (19);
		\draw (26) to (7);
		\draw (27) to (11);
		\draw (27) to (18);
		\draw (27) to (16);
		\draw (27) to (17);
		\draw (25) to (18);
		\draw (25) to (19);
		\draw (24) to (4);
		\draw (24) to (9);
		\draw (28) to (8);
		\draw [bend left=45] (24) to (8);
		\draw [in=60, out=165, looseness=1.50] (24) to (22);
		\draw [bend right=60, looseness=2.25] (28) to (9);
		\draw [bend right=285] (28) to (4);
		\draw [bend left] (25) to (23);
		\draw (27) to (23);
		\draw [bend left=75, looseness=1.50] (28) to (6);
		\draw [bend left=45] (24) to (2);
		\draw [in=30, out=-105] (28) to (22);
		\draw [bend right] (12) to (27);
	\end{pgfonlayer}
\end{tikzpicture}